\newtheorem{thm}{Theorem}[section]
\newtheorem{cor}[thm]{Corollary}
\newtheorem{lem}[thm]{Lemma}
\theoremstyle{definition}
\newtheorem{defn}[thm]{Definition} 
\newtheorem{exmp}[thm]{Example} 
\theoremstyle{remark}
\newtheorem{rem}[thm]{Remark} 
\numberwithin{equation}{section}
\newcommand{\cA}{\mathcal{A}}
\newcommand{\C}{\mathbb{C}}
\newcommand{\cD}{\mathcal{D}}
\newcommand{\iD}{\mathit{\Delta}}
\newcommand{\Id}{I}
\newcommand{\SL}{\mathit{SL}}
\newcommand{\R}{\mathbb{R}}
\newcommand{\gS}{\mathfrak{S}}
\newcommand{\cX}{\mathcal{X}}
\begin{document}

\title[Root Functions of Sturm-Liouville Problems]
      {On Completeness of Root Functions
\\
       of Sturm-Liouville  Problems
\\
       with Discontinuous Boundary Operators\footnote{This is a preprint version 
			of the paper published in Journal of Differential 
Equations, 255 (2013), 3305-3337}}


\author{A. Shlapunov}

\address[Alexander Shlapunov]
        {Siberian Federal University
\\
         Institute of Mathematics
\\
         pr. Svobodnyi 79
\\
         660041 Krasnoyarsk
\\
         Russia}

\email{ashlapunov@sfu-kras.ru}


\author{N. Tarkhanov}

\address[Nikolai Tarkhanov]
        {Universit\"{a}t Potsdam
\\
         Institut f\"{u}r Mathematik
\\
         Am Neuen Palais 10
\\
         14469 Potsdam
\\
         Germany}

\email{tarkhanov@math.uni-potsdam.de}

\date{November 14, 2011}

\subjclass[2010]{Primary 35B25; Secondary 35J60}

\keywords{Sturm-Liouville problems,
          discontinuous Robin condition,
          root functions,
          Lipschitz domains}

\begin{abstract}
We consider a Sturm--Liouville boundary value problem in a boun\-ded domain $\cD$ of
$\mathbb{R}^n$. By this is meant that the differential equation is given by a second order
elliptic operator of divergent form in $\cD$ and the boundary conditions are of Robin type on
$\partial \cD$. The first order term of the boundary operator is the oblique derivative whose
coefficients bear discontinuities of the first kind.
Applying the method of weak perturbation of compact self-adjoint operators and the method
of rays of minimal growth, we prove the completeness of root functions related to the boundary
value problem in Lebesgue and Sobolev spaces of various types.
\end{abstract}

\maketitle


\tableofcontents


\section*{Introduction}
\label{s.Int}

The Hilbert space methods take considerable part in the modern theory of partial differential
equations.
In particular, the spectral theorem for compact self-adjoint operators attributed to Hilbert and
Schmidt allows one to look for solutions of boundary value problems for formally self-adjoint
operators in the form of expansions over eigenfunctions of the operator.

Non-self-adjoint compact operators fail to have eigenvectors in general.
Keldysh \cite{Keld51} (see also \cite{Keld71} and
                                \cite[Ch. 5, \S 8]{GokhKrei69} for more details)
elaborated expansions over root functions for weak perturbations of compact self-adjoint operators.
In particular, he applied successfully the theorem on the completeness of root functions to the
Dirichlet problem for second order elliptic operators in divergent form.

The problem of completeness of the system of eigen- and associated functions of boundary value
problems for elliptic operators in domains with smooth boundary was studied in many articles
(see for instance \cite{Brow53},
                  \cite{Brow59a},
                  \cite{Brow59b},
                  \cite{Agmo62},
                  \cite{Kond99}).
In a series of papers \cite{Agra94a},
                      \cite{Agra94b},
                      \cite{Agra08},
                      \cite{Agra11b},
                      \cite{Agra11c},
including two serveys \cite{Agra02} and
                      \cite{Agra11a},
Agranovich proved the completeness of root functions for a wide class of boundary value problems
for second order elliptic equations with boundary conditions of the Dirichlet,
                                                                Neumann and
                                                                Zaremba
type in standard Sobolev spaces over domains with Lipschitz boundary.
Note that the class of Lipschitz surfaces does not include surfaces with general conical points,
as they are introduced in analysis on singular spaces.

Root functions of general elliptic boundary value problems in weighted Sobolev spaces over domains
with conic and edge type singularities on the boundary were studied in \cite{EgorKondSchu01} and
\cite{Tark06}.
These papers used estimates of the resolvent of compact operators and the so-called rays of
minimal growth. In order to realize fully to what extent the completeness criteria of
\cite{EgorKondSchu01} and
\cite{Tark06}
are efficient, we dwell on the concept of ellipticity on a compact manifold with smooth edges on
the boundary. Such a singular space $\cX$ has three smooth strata, more precisely,
the interior part $\cX_0$ of $\cX$,  the smooth part $\cX_1$ of the boundary
and the edge $\cX_2$ which is assumed to be a compact closed manifold.
Pseudodifferential operators on $\cX$ are $(3 \times 3)\,$-matrices $\cA$ whose entries $A_{i,j}$
are operators mapping functions on $\cX_j$ to functions on $\cX_i$.
To each operator $\cA$ one assigns a principal symbol
   $\sigma (\cA) := (\sigma_0 (\cA),\sigma_1 (\cA),\sigma_2 (\cA))$
in such a way that
   $\sigma (\cA) = 0$ if and only if $\cA$ is compact,
and
   $\sigma (\mathcal{B} \cA) = \sigma (\mathcal{B})\, \sigma (\cA)$
for all operators $\cA$ and $\mathcal{B}$ whose composition is well defined.
The components $\sigma_i (\cA)$ of the principal symbol are functions on the cotangent bundles of
$\cX_i$ with values in operator spaces.
They are smooth away from zero sections of the bundles and bear certain twisted homogeneity as
operator families.
An operator $\cA$ is called elliptic if its principal symbol is invertible away from the zero
sections of cotangent bundles.
The invertibility of $\sigma_0 (\cA)$ just amounts to the ellipticity of $\cA$ in the interior
of $\cX$. The invertibility of $\sigma_1 (\cA)$ is equivalent to the Shapiro-Lopatinskii condition
on the smooth part of $\partial \cX$.
The invertibility of $\sigma_2 (\cA)$ constitutes the most difficult problem, for this operator
family is considered in weighted Sobolev spaces on an infinite cone.
An operator $\cA$ proves to be Fredholm if and only if it is elliptic.
However, from what has been said it follows that there is no efficient criteria of ellipticity on
compact manifolds with edges on the boundary.
In general these techniques allow one to derive at most the following result.
Consider a classical boundary value problem on $\cX$ satisfying the Shapiro-Lopatinskii condition
away from the edge $\cX_2$.
It is actually given by a column of operators $A_{i,0}$ with $i = 0, 1$, where
   $A_{0,0}$ is an elliptic differential operator in $\cX_0$
and
   $A_{1,0}$ a differential operator near $\cX_1$ followed by restriction to $\cX_1$.
We complete the column to a $(2 \times 2)\,$-matrix $A$ by setting $A_{0,1} = 0$ and
                                                                   $A_{1,1} = 0$.
The Shapiro-Lopatinskii condition implies that
   $\sigma_2 (A) (y,\eta)$
is a family of Fredholm operators on the unit sphere in $T^\ast \cX_2$.
Hence we can set $\sigma_2 (A) (y,\eta)$ in the frame of a $(3 \times 3)\,$-matrix $a (y,\eta)$ on
the unit sphere of $T^\ast \cX_2$ which is moreover invertible.
A distinct quantisation procedure leads then immediately to a Fredholm operator of the type
\begin{equation}
\label{eq.Fo}
   \left( \begin{array}{cc}
          A_{0,0} & A_{0,2}
\\
          A_{1,0} & A_{1,2}
\\
          A_{2,0} & A_{2,2}
          \end{array}
   \right) :\,
   \begin{array}{c}
   C^{\infty} (\cX)
\\
   \oplus
\\
   C^{\infty} (\cX_2, \mathbb{C}^{l_1})
   \end{array}
 \to
   \begin{array}{c}
   C^{\infty} (\cX)
\\
   \oplus
\\
   C^{\infty} (\partial \cX, \mathbb{C}^{m})
\\
   \oplus
\\
   C^{\infty} (\cX_2, \mathbb{C}^{l_2})
   \end{array},
\end{equation}
where $l_1$ and $l_2$ are non-negative integers.
However, the Fredholm property of (\ref{eq.Fo}) elucidates by no means the original problem
$$
   \left\{ \begin{array}{rclcl}
             A_{0,0} u
           & =
           & f
           & \mathrm{in}
           & \cX_0,
\\
             A_{1,0} u
           & =
           & u_0
           & \mathrm{on}
           & \cX_1,
           \end{array}
   \right.
$$
unless $\cX_2$ is of dimension $0$.
Thus, operator-valued symbols make the condition of ellipticity ineffective.

In the present paper we study the completeness of root functions for the Sturm-Liouville boundary
value problems for second order elliptic operators in divergent form with Robin type boundary
conditions containing oblique derivative with discontinuous coefficients.
The discontinuities are of first kind along a smooth hypersurface on the boundary of $\cD$.
This hypersurface can be thought of as edge on the boundary, hence we are within the framework of
analysis on compact manifolds with edges of codimension $1$ on the boundary.
The theory of \cite{Tark06} applies in this situation provided that one is able to establish the
invertibility of the edge symbol.
This latter is a family of Sturm-Liouville boundary value problems in an infinite plane cone
parametrized by the points of the cotangent bundle of the edge.
On reducing the family to the boundary of the cone one obtains two ordinary pseudodifferential
equations on the rays constituting the boundary of the cone.
The invertibility of the edge symbol just amounts to the unique solvability of these equations in
certain weighted Sobolev spaces on the rays.
Making this the subject of our next paper, we restrict our attention here on variational solutions
of the Sturm-Liouville boundary value problem.
In the absence of regularity theorem we are forced to apply the weak perturbation theory, thus
showing the completeness of root functions in the $L^2$ space.

\section{Weak perturbations of compact self-adjoint operators}
\label{s.2}

Let $H$ be a separable (complex) Hilbert space and $A: H \to H$ a linear operator.
As usual, $\lambda \in \C$ is said to be an eigenvalue of $A$ if there is a non-zero element
$u \in H$, such that $(A - \lambda I)  u = 0$, where $\Id$ is the identity operator in $H$.
The element $u$ is called an eigenvector of $A$ corresponding to the eigenvalue $\lambda$.
When supplemented with the zero element, all eigenvectors corresponding to an eigenvalue $\lambda$
form a vector subspace $E (\lambda)$ in $H$.
It is called an eigenspace of $A$ corresponding to $\lambda$, and the dimension of $E (\lambda)$
is called the (geometric) multiplicity of $\lambda$.
The famous spectral theorem of Hilbert and Schmidt asserts that the system of eigenvectors of a
compact self-adjoint operator in $H$ is complete.

\begin{thm}
\label{t.spectral}
Let
   $A: H \to H$ be compact and self-adjoint.
Then
   all eigenvalues of $A$ are real,
   each non-zero eigenvalue has finite multiplicity,
and
   the system of all eigenvalues counted with their multiplicities is countable and has the only
   accumulation point $\lambda = 0$.
Moreover, there is an orthonormal basis in $H$ consisting of eigenvectors of $A$.
\end{thm}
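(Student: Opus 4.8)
The plan is to establish the four assertions in turn, the crux being the existence of a nonzero eigenvalue for a nonzero operator. The reality of the eigenvalues is immediate: if $Au = \lambda u$ with $u \neq 0$, then self-adjointness gives $\lambda \| u \|^2 = (Au,u) = (u,Au) = \overline{\lambda}\, \| u \|^2$, whence $\lambda = \overline{\lambda} \in \R$. More generally the quadratic form $u \mapsto (Au,u)$ takes only real values, and a standard computation shows that for self-adjoint $A$ one has $\| A \| = \sup_{\| u \|=1} |(Au,u)|$. The first key step is then to prove that $\mu := \sup_{\| u \|=1}(Au,u)$ or $\nu := \inf_{\| u \|=1}(Au,u)$, whichever has modulus equal to $\| A \|$, is actually attained and is an eigenvalue. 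Assuming, say, $\mu = \| A \| > 0$, I would pick a sequence $u_k$ with $\| u_k \| = 1$ and $(Au_k,u_k) \to \mu$, and observe, using $\| Au_k \| \leq \| A \| = \mu$, that
\begin{align*}
   \| Au_k - \mu u_k \|^2
 & = \| Au_k \|^2 - 2 \mu (Au_k, u_k) + \mu^2
\\
 & \leq 2 \mu^2 - 2 \mu (Au_k, u_k)
   \to 0 .
\end{align*}
Compactness of $A$ lets me pass to a subsequence with $Au_k \to v$; then $\mu u_k \to v$, so $u_k \to v/\mu$, and the limit $u := v/\mu$ satisfies $\| u \| = 1$ and $Au = \mu u$.

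With the existence of one eigenvalue in hand, I would proceed by the usual orthogonal-complement induction. Having found orthonormal eigenvectors $u_1, \dots, u_{k-1}$ with $|\lambda_1| \geq \dots \geq |\lambda_{k-1}|$, I restrict $A$ to the orthogonal complement $H_k$ of their span, which is invariant under $A$ precisely because $A$ is self-adjoint, and apply the key step to the compact self-adjoint operator $A|_{H_k}$ to extract a further eigenvector $u_k$ with $|\lambda_k| = \| A|_{H_k} \|$. This yields an orthonormal system $\{u_k\}$ of eigenvectors with $|\lambda_k|$ non-increasing. To see that the nonzero eigenvalues form a discrete set with finite multiplicities I invoke compactness once more: if infinitely many orthonormal eigenvectors had eigenvalues of modulus at least some $\delta > 0$, then for $j \neq k$ orthonormality would give $\| Au_j - Au_k \|^2 = |\lambda_j|^2 + |\lambda_k|^2 \geq 2 \delta^2$, so $\{Au_k\}$ could have no convergent subsequence, contradicting compactness. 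Hence each nonzero eigenvalue has finite multiplicity and the only possible accumulation point of the eigenvalues is $0$; in particular $\lambda_k \to 0$.

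Finally, for completeness I let $H_0$ be the closed linear span of all eigenvectors attached to nonzero eigenvalues and consider its orthogonal complement $H_0^\perp$, which is again $A$-invariant. The restriction $A|_{H_0^\perp}$ is compact and self-adjoint and, by construction, has no nonzero eigenvalue; the key step applied to it then forces its operator norm to vanish, so $A$ annihilates $H_0^\perp$ and $H_0^\perp \subseteq E(0) = \ker A$. Thus every element of $H_0^\perp$ is an eigenvector for $\lambda = 0$, and choosing a countable orthonormal basis of the separable space $\ker A$ and adjoining the $u_k$ produces the desired orthonormal basis of eigenvectors. I expect the main obstacle to be precisely the first key step --- showing that the supremum $\mu$ is attained at a genuine eigenvector --- since it is here that the interplay between the variational characterization of $\| A \|$ and the compactness of $A$ is essential; the subsequent arguments reduce to routine iteration and orthogonality bookkeeping.
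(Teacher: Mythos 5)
The paper never proves Theorem~\ref{t.spectral}: it is quoted as the classical Hilbert--Schmidt spectral theorem (background for the Keldysh perturbation machinery), so there is no proof of the paper's own to compare yours against. Your argument is the standard variational proof and it is correct: the identity $\| A \| = \sup_{\|u\|=1} |(Au,u)|$ for self-adjoint $A$, the norm-attainment step via $\| Au_k - \mu u_k \|^2 \leq 2\mu^2 - 2\mu (Au_k,u_k) \to 0$ combined with compactness, the induction on $A$-invariant orthogonal complements, the orthonormality/compactness argument excluding infinitely many eigenvalues of modulus $\geq \delta$, and the reduction of completeness to showing $A$ vanishes on $H_0^\perp$ are all sound. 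Two points deserve one more line of care in a final write-up. First, your key step as written assumes $\mu = \|A\| > 0$; you should note that the case $|\nu| = \|A\|$ is handled by replacing $A$ with $-A$, and that $\|A\| = 0$ (i.e.\ $A = 0$) is trivial. Second, in the last paragraph the orthonormal basis of $H_0$ should be assembled as the union of orthonormal bases of the individual eigenspaces $E(\lambda)$, $\lambda \neq 0$ (these are finite-dimensional, mutually orthogonal, and countable in number), rather than literally ``the $u_k$'' produced by your induction --- unless you additionally argue that those $u_k$ exhaust $H_0$, which requires the observation that $\| A |_{H_k} \| = |\lambda_k| \to 0$ forces $A$ to vanish on $\bigcap_k H_k$. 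Both are routine repairs and do not affect the validity of the approach.
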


As already mentioned, a non-self-adjoint compact operator might have no eigenvalues.
However, each non-zero eigenvalue (if exists) is of finite multiplicity, see for instance
   \cite{DunfSchw63}.
Similarly to the Jordan normal form of a linear operator on a finite-dimensional vector space one
uses the more general concept of root functions of operators.

More precisely, an element $u \in H$ is called a root vector of $A$ corresponding to an eigenvalue
$\lambda \in \C$ if
$
(A - \lambda \Id)^m u = 0
$
for some natural number $m$.
The set of all root vectors corresponding to an eigenvalue $\lambda$ form a vector subspace in $H$
whose dimension is called the (algebraic) multiplicity of $\lambda$.

If the linear span of the set of all root elements is dense in $H$ one says that the root elements
of $A$ are complete in $H$.
Aside from self-adjoint operators, the question arises under what conditions on a compact operator
$A$ the system of its root elements is complete.

If the dimension of $H$ is finite then the completeness is equivalent to the possibility of
reducing the matrix $A$ to the Jordan normal form.
Of course, this is always the case for linear operators in complex vector spaces, see, for
instance,
   \cite[\S~88]{Waer67}.

In order to formulate the simplest completeness result for Hilbert spaces we need the definition
of the order of a compact operator $A$.
Since $A: H \to H$ is compact, the operator $A^\ast A$ is compact, self-adjoint and non-negative.
Hence it follows that $A^\ast A$  possesses a unique non-negative self-adjoint compact square root
   $(A^\ast A)^{1/2}$
often denoted by $|A|$.
By Theorem \ref{t.spectral} the operator $|A|$ has countable system of non-negative eigenvalues
   $s_\nu (A)$
which are called the $s\,$-numbers of $A$.
It is clear that if $A$ is self-adjoint then $s_\nu = |\lambda_\nu|$, where $\{ \lambda_\nu \}$ is
the system of eigenvalues of $A$.

\begin{defn}
\label{d.Schatten}
The operator $A$ is said to belong to the Schatten class $\gS_p$, with $0 < p < \infty$,
if
$$
   \sum_\nu |s_\nu (A)|^p < \infty.
$$
\end{defn}

Note that $\gS_2$ is the set of all Hilbert-Schmidt operators while
          $\gS_1$ is the ideal of all trace class operators.

The following lemma will be very useful in the sequel; it is taken from \cite{DunfSchw63}
   (see also \cite[Ch.~2, \S~2]{GokhKrei69}).

\begin{lem}
\label{l.2}
Let $A$ be a compact operator of class $\gS_p$, with $0 < p < \infty$, in a Hilbert space $H$, and
$B$ be a bounded operator in $H$.
Then the compositions $B A$ and
                      $A B$
belong to $\gS_p$.
\end{lem}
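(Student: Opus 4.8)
The plan is to use the variational (approximation-number) description of the $s\,$-numbers, from which both inclusions follow at once by a one-line estimate. Recall that, by \thmref{t.spectral} applied to the compact self-adjoint non-negative operator $|A| = (A^\ast A)^{1/2}$, there is an orthonormal system $\{ e_\nu \}$ of eigenvectors of $|A|$ with $|A| e_\nu = s_\nu (A) e_\nu$ and $s_1 (A) \geq s_2 (A) \geq \ldots \geq 0$. Setting $f_\nu := A e_\nu / s_\nu (A)$ for those indices $\nu$ with $s_\nu (A) > 0$ gives (using $\| A u \| = \| |A| u \|$, so that $\ker A = \ker |A|$) the Schmidt representation
\begin{equation}
\label{eq.Schmidt}
   A u = \sum_\nu s_\nu (A)\, (u, e_\nu)\, f_\nu,
   \qquad u \in H,
\end{equation}
where $(\cdot,\cdot)$ is the inner product of $H$ and $\{ f_\nu \}$ is again orthonormal. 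I would first establish the characterisation
\begin{equation}
\label{eq.approx}
   s_\nu (A) = \inf \{ \| A - F \| : F \in \mathcal{B} (H),\ \mathrm{rank}\, F \leq \nu - 1 \},
\end{equation}
the infimum being taken over all bounded operators $F$ of rank at most $\nu - 1$ and $\| \cdot \|$ standing for the operator norm.

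For the proof of (\ref{eq.approx}) the bound ``$\leq$'' is immediate: truncating the series (\ref{eq.Schmidt}) at index $\nu - 1$ produces an operator of rank $\leq \nu - 1$ whose distance to $A$ in the operator norm equals precisely $s_\nu (A)$. The reverse inequality is the only point requiring an argument. Given any $F$ of rank $\leq \nu - 1$, its kernel has codimension at most $\nu - 1$, and therefore meets the $\nu\,$-dimensional subspace spanned by $e_1, \ldots, e_\nu$ in some nonzero vector $u$. Normalising $\| u \| = 1$, writing $u = \sum_{j \leq \nu} c_j e_j$ and using $F u = 0$ together with (\ref{eq.Schmidt}) gives $\| (A - F) u \| = \| A u \|^2 = \sum_{j \leq \nu} |c_j|^2 s_j (A)^2 \geq s_\nu (A)^2$, whence $\| A - F \| \geq s_\nu (A)$. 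This yields (\ref{eq.approx}).

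With (\ref{eq.approx}) in hand the lemma becomes a short computation. If $F$ has rank $\leq \nu - 1$, then so do both $B F$ and $F B$, while
$$
   \| B A - B F \| \leq \| B \|\, \| A - F \|,
   \qquad
   \| A B - F B \| \leq \| A - F \|\, \| B \|.
$$
Passing to the infimum over all such $F$ and invoking (\ref{eq.approx}) for $BA$, $AB$ and $A$ gives the estimates $s_\nu (BA) \leq \| B \|\, s_\nu (A)$ and $s_\nu (AB) \leq \| B \|\, s_\nu (A)$ for every $\nu$. Raising to the power $p$ and summing over $\nu$ we obtain
$$
   \sum_\nu | s_\nu (BA) |^p \leq \| B \|^p \sum_\nu | s_\nu (A) |^p < \infty,
$$
and likewise for $AB$, so that both $BA$ and $AB$ lie in $\gS_p$ by \defn*{}\ref{d.Schatten}.

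The one genuine obstacle is the reverse inequality in (\ref{eq.approx}): it rests on the dimension-counting argument above and on the fact that $|A|$, being compact and self-adjoint, supplies the orthonormal eigenbasis guaranteed by \thmref{t.spectral}. Everything beyond that is purely formal; in particular, finiteness of $p$ enters only at the last summation, so the same pointwise estimates $s_\nu (BA), s_\nu (AB) \leq \| B \|\, s_\nu (A)$ handle all Schatten classes uniformly.
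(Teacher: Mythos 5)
Your proof is correct, but there is no proof in the paper to compare it with: the authors do not prove Lemma~\ref{l.2} at all, they simply quote it from \cite{DunfSchw63} (see also \cite[Ch.~2, \S~2]{GokhKrei69}). What you have written is, in substance, the standard argument found in those references: the approximation-number characterisation of the singular values, $s_\nu (A) = \inf \{ \| A - F \| :\, \mathrm{rank}\, F \leq \nu - 1 \}$, immediately yields $s_\nu (BA) \leq \| B \|\, s_\nu (A)$ and $s_\nu (AB) \leq \| B \|\, s_\nu (A)$, and summing $p$-th powers gives the ideal property of $\gS_p$ for all $0 < p < \infty$ at once. Your dimension-counting proof of the lower bound in the characterisation is sound (the kernel of a rank $\leq \nu - 1$ operator has codimension $\leq \nu - 1$, hence meets the span of $e_1, \ldots, e_\nu$ nontrivially); note only the typographical slip where you write $\| (A - F) u \| = \| A u \|^2$: the intended chain is $\| (A - F) u \|^2 = \| A u \|^2 = \sum_{j \leq \nu} |c_j|^2\, s_j (A)^2 \geq s_\nu (A)^2$, after which taking square roots gives $\| A - F \| \geq s_\nu (A)$. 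Two further harmless points: the paper's Definition~\ref{d.Schatten} does not order the $s$-numbers, whereas your argument tacitly enumerates them in decreasing order, which is always possible for the eigenvalues of the compact non-negative operator $|A|$ supplied by Theorem~\ref{t.spectral}; and when $s_\nu (A) = 0$ the lower bound is trivial, so the restriction to indices with $s_\nu (A) > 0$ in the Schmidt representation causes no loss.
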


After M.V. Keldysh a compact operator $A$ is said to be of finite order if it belongs to a
Schatten class $\gS_p$.
The infinum of such numbers $p$ is called the order of $A$.
The following result is usually referred to as theorem on weak perturbations of compact
self-adjoint operators.
It was first proved in \cite{Keld51}, see also \cite{Keld71}.
Here we present its formulation from \cite[Ch.~5, \S~8]{GokhKrei69}.

\begin{thm}
\label{t.keldysh}
Let $A_0$ be a compact self-adjoint operator of finite order in $H$.
If
   $\delta A$ is a compact operator
and
   the operator $A_0 (\Id + \delta A)$ is injective,
then
   the system of root elements of $A_0 (\Id + \delta A)$ is complete in $H$
and,
   for any $\varepsilon > 0$, all eigenvalues of $A_0 (\Id + \delta A)$ (except for a finite number)
   belong to the angles $|\arg \lambda| < \varepsilon$ and
                  $|\arg \lambda - \pi| < \varepsilon$.
Moreover,

1)
If $A_0$ has only a finite number of negative eigenvalues, then $A_0 (\Id + \delta A)$ has at most a
finite number of  eigenvalues in the angle $|\arg \lambda - \pi| < \varepsilon$.

2)
If $A_0$ has only a finite number of positive eigenvalues, then $A_0 (\Id + \delta A)$ has at most a
finite number of  eigenvalues in the angle $|\arg \lambda| < \varepsilon$.
\end{thm}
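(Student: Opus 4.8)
The plan is to realize this as an instance of the \emph{method of rays of minimal growth}, combining uniform resolvent bounds along rays off the real axis with a Phragm\'en--Lindel\"of argument. Set $T := A_0 (\Id + \delta A)$. First I would record that $T$ is compact of class $\gS_p$ for the same $p$ as $A_0$, since $\Id + \delta A$ is bounded and $T = A_0 (\Id + \delta A)$, so \lemref{l.2} applies. Next I would extract two consequences of the injectivity hypothesis. As $\delta A$ is compact, $\Id + \delta A$ is Fredholm of index $0$; its kernel lies in $\ker T = \{0\}$, so it is injective, hence invertible. Surjectivity of $\Id + \delta A$ together with injectivity of $T$ then forces $\ker A_0 = \{0\}$. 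Thus $A_0$ is injective and $M_\infty := \Id + \delta A$ is boundedly invertible.

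The engine is that every ray $\arg z = \theta$ with $\theta \neq 0, \pi$ is a ray of minimal growth for $T$. I would factor
\[
   \Id - z T = (\Id - z A_0)\, M(z), \qquad M(z) = \Id + \delta A - (\Id - z A_0)^{-1} \delta A,
\]
using $(\Id - z A_0)^{-1} z A_0 = (\Id - z A_0)^{-1} - \Id$. Since $A_0$ is self-adjoint, injective and compact, the spectral theorem gives $(\Id - z A_0)^{-1} = \sum_k (1 - z \mu_k)^{-1} \langle \cdot, e_k \rangle e_k$ with real $\mu_k$; as $z\mu_k$ lies on the line through $0$ of argument $\theta$, one has $|1 - z\mu_k| \geq |\sin \theta|$, so $\| (\Id - z A_0)^{-1} \| \leq 1/|\sin\theta|$ uniformly on the ray. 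As $|z| \to \infty$ along the ray, $(\Id - z A_0)^{-1} \to 0$ strongly (here injectivity of $A_0$ is essential), whence $(\Id - z A_0)^{-1} \delta A \to 0$ in operator norm because $\delta A$ is compact, and $M(z) \to M_\infty$ in norm. As $M_\infty$ is invertible, $M(z)$ is invertible with $\| M(z)^{-1} \| \leq C_\theta$ for large $|z|$, giving $\| (\Id - zT)^{-1} \| \leq C_\theta / |\sin\theta|$ along the ray.

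For completeness I would argue by contradiction: suppose the root vectors of $T$ do not span and pick $h \neq 0$ orthogonal to all of them. For fixed $x$, the scalar function $f(z) := \langle (\Id - z T)^{-1} x, h \rangle$ is meromorphic with poles only at the points $1/\lambda_j$, and the coefficients of its principal parts have the form $\langle (T - \lambda_j)^{k-1} P_j x, h\rangle$, with $P_j$ the Riesz projection onto the root subspace of $T$ at $\lambda_j$; these vanish because $h \perp$ every root vector, so $f$ is entire. Since $T \in \gS_p$, the regularized determinant yields $\| (\Id - zT)^{-1} \| \leq \exp (c |z|^p)$ off the poles, so $f$ has order at most $p$. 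I would then insert finitely many rays into the upper and lower half-planes, subdividing each into sectors of opening $< \pi/p$ (possible because only $\theta = 0, \pi$ are excluded); on these rays $f$ is bounded, so Phragm\'en--Lindel\"of applied sector by sector shows $f$ is bounded on $\C$, hence constant equal to $f(0) = \langle x, h \rangle$. But along any off-axis ray $(\Id - z T)^{-1} x = M(z)^{-1}(\Id - zA_0)^{-1} x \to 0$, so $f \equiv 0$ and $\langle x, h\rangle = 0$ for every $x$, forcing $h = 0$, a contradiction.

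Finally, the eigenvalue localization reflects that $A_0 \delta A$ is a \emph{weak} perturbation of the self-adjoint $A_0$. Writing $T = A_0 + A_0 \delta A$ and using the singular-value inequality $s_{m+n-1}(A_0 \delta A) \leq s_m(A_0)\, s_n(\delta A)$ together with $s_n(\delta A) \to 0$, one checks $s_j(A_0 \delta A) = o(s_j(A_0))$, so the nonzero eigenvalues of $T$ are asymptotically governed by the real eigenvalues $\mu_j$ of $A_0$; in particular $\arg \lambda_j \to 0$ or $\pi$, which is statement 2). The addenda follow from the sign distribution of the $\mu_j$: eigenvalues of $T$ near the negative (resp. positive) real axis can only arise from negative (resp. positive) $\mu_j$, so if these are finite in number then only finitely many $\lambda_j$ lie in $|\arg\lambda - \pi| < \varepsilon$ (resp. $|\arg\lambda| < \varepsilon$). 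I expect this localization to be the main obstacle: turning the $o(\cdot)$ control of $s$-numbers into the statement that $\arg\lambda_j \to 0, \pi$ with only finitely many exceptions requires a quantitative eigenvalue-perturbation and counting argument, whereas the completeness itself is comparatively direct once the rays of minimal growth are in hand.
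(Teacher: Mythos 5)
A preliminary remark: the paper itself does not prove \thmref{t.keldysh}; it only quotes the result from \cite{Keld51} and \cite[Ch.~5, \S~8]{GokhKrei69}. So your proposal can only be compared with the classical argument and with the machinery the paper develops for its own operators in \secref{s.3}, which belongs to the same circle of ideas (rays of minimal growth).

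Your completeness argument is correct in substance and follows that standard route: the factorization $\Id - zT = (\Id - zA_0)\,M(z)$, the spectral bound $\|(\Id - zA_0)^{-1}\| \le 1/|\sin\theta|$ on off-axis rays, the norm convergence $M(z) \to \Id + \delta A$ (which uses exactly the two consequences of injectivity you isolate), the entirety of $f(z) = \langle (\Id - zT)^{-1}x, h\rangle$ for $h$ orthogonal to all root vectors, and Phragm\'en--Lindel\"of in sectors of opening $< \pi/p$. One step is misstated: for $T \in \gS_p$ the determinant estimates do \emph{not} give $\|(\Id - zT)^{-1}\| \le \exp (c|z|^p)$ ``off the poles'' --- the resolvent is unbounded near each pole however one excises it. The correct statement, which the paper records as \lemref{l.1} citing \cite{DunfSchw63}, is that such a bound holds on a sequence of circles $|z| = r_j \to \infty$. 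This is still sufficient, because Phragm\'en--Lindel\"of only needs the growth bound on a sequence of arcs tending to infinity together with boundedness on the two bounding rays (this is precisely how the paper argues around (\ref{eq.17})), so the slip is repairable without any new idea.

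The genuine gap is the spectral localization: the two-angle statement and the addenda 1), 2). The $s$-number argument you sketch does not work. First, the Ky Fan inequality $s_{m+n-1}(A_0 \delta A) \le s_m (A_0)\, s_n (\delta A)$ yields $s_j (A_0\delta A) \le s_{j-n+1}(A_0)\, s_n(\delta A)$, and since $s_{j-n+1}(A_0)$ may be vastly larger than $s_j(A_0)$ (nothing forbids arbitrarily steep jumps in the sequence $s_j(A_0)$), this is not $o(s_j(A_0))$. Second, and more fundamentally, even granted $s_j(A_0\delta A) = o(s_j(A_0))$, there is no general perturbation principle converting domination of $s$-numbers into the claim that the eigenvalues of the non-normal operator $T$ have arguments tending to $\{0,\pi\}$; that implication is the content of Keldysh's theorem, not an available input. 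What you are missing is, however, already contained in the estimates you prove: they are uniform in the angle. For all $z$ with $|\sin (\arg z)| \ge \sin\varepsilon$ one has $\|(\Id - zA_0)^{-1}\| \le 1/\sin \varepsilon$, and the strong convergence $(\Id - zA_0)^{-1} \to 0$ as $|z| \to \infty$ holds uniformly over this set; hence $\|(\Id - zA_0)^{-1}\delta A\| \to 0$ there, and $M(z)$ is invertible with uniformly bounded inverse on $\{|z| \ge R_\varepsilon,\ |\sin (\arg z)| \ge \sin \varepsilon\}$. Consequently $(\Id - zT)^{-1}$ exists on this set, so every pole $1/\lambda_j$ of modulus $\ge R_\varepsilon$ satisfies $|\sin (\arg \lambda_j)| < \sin\varepsilon$; since the poles accumulate only at infinity, all but finitely many eigenvalues lie in the two angles. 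For addendum 1), if $A_0$ has only finitely many negative eigenvalues, the same spectral computation gives $\|(\Id - zA_0)^{-1}\| \le 1$ on the closed half-plane $\Re z \le 0$ for $|z|$ large (for $\mu_k > 0$ one has $|1 - z\mu_k| \ge 1$ there, while for the finitely many $\mu_k < 0$ one has $|1 - z\mu_k| \ge |z|\,|\mu_k| - 1$), together with uniform strong convergence to zero; running your perturbation step verbatim shows that only finitely many eigenvalues of $T$ satisfy $\Re \lambda \le 0$, which combined with the two-angle statement gives 1). Addendum 2) is symmetric. This resolvent-in-a-sector argument is exactly how the paper itself localizes spectra (\lemref{l.rays.T0} feeding Theorems \ref{t.root.func.Schatten} and \ref{t.root.func.Schatten.Comp}), so the repair keeps you entirely within your own framework.
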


As is easy to see, both operators $A_0 (\Id + \delta A)$ and
                                  $A_0$
are in fact injective under the hypothesis of Theorem \ref{t.keldysh}.

\section{The Sturm-Liouville problem}
\label{s.4}

By a Sturm-Liouville problem in $\R^n$ we mean any boundary value problem for solutions of second
order elliptic partial differential equation with Robin-type boundary condition.
The coefficients of the Robin boundary condition are allowed to have discontinuities of the first
kind, and so mixed boundary conditions are included as well.

Let $\cD$ be a bounded domain in $\R^n$ with Lipschitz boundary, i.e., the surface $\partial \cD$
is locally the graph of a Lipschitz function.
In particular, the boundary $\partial \cD$ possesses a tangent hyperplane almost everywhere.

We consider complex-valued functions defined in the domain $\cD$ and its closure $\overline{\cD}$.
For $1 \leq q \leq \infty$, we write $L^q (\cD)$ for the space of all measurable
functions $u$ in $\cD$,
such that the integral of $|u|^q$ over $\cD$ is finite.
For functions $u \in C^s (\overline{\cD})$ we introduce the norm
$$
   \| u \|_{H^s (\cD)}
 = \Big( \int_{\cD} \sum_{|\alpha| \leq s} |\partial^\alpha u|^2\, dx \Big)^{1/2},
$$
where
$
   \partial^\alpha
 = \partial_1^{\alpha_1} \cdots \partial_n^{\alpha_n}
$
for a multi-index $\alpha = (\alpha_1, \ldots \alpha_n)$, and
   $\partial_j = \partial / \partial x_j$.
The completion of the space $C^s (\overline{\cD})$ with respect to this norm is the Banach space
   $H^s (\cD)$.
It is convenient to set $H^0 (\cD) := L^2 (\cD)$.
Then $H^s (\cD)$ is a Hilbert space with scalar product
$$
   ( u,v)_{H^s (\cD)}
 = \int_{\cD} \sum_{|\alpha| \leq s} \partial^\alpha u \overline{\partial^\alpha v}\, dx,
$$
$u, v \in H^s (\cD)$.

Consider a second order partial differential operator $A$ in the domain $\cD$ of divergence form
$$
   A (x,\partial) u
 = - \sum_{i, j = 1}^n \partial_i ( a_{i,j} (x) \partial _j u)
   + \sum_{j = 1}^n a_j (x) \partial_j u
   + a_0 (x) u,
$$
the coefficients $a_{i,j}$, $a_j$ and $a_0$ being of class $L^{\infty} (\cD)$.

Let
   $v (x) = (v_1 (x), \ldots, v_n (x))$
be a vector field on the surface $\partial \cD$.
Denote by $\partial_v$ the oblique derivative
$$
   \partial_v = \sum_{j=1}^n v_j (x) \partial_j
$$
and introduce a first order boundary operator
$
   B (x,\partial)
 = \partial_v + b_0 (x).
$
The coefficients $v_1 (x), \ldots, v_n (x)$ and
                 $b_0 (x)$
are assumed to be bounded measurable functions on $\partial \cD$.
We allow the vector $v (x)$ to vanish on an open connected subset $S$ of $\partial \cD$
with piecewise smooth boundary $\partial S$.
In this case we assume that $b_0 (x)$ does not vanish on $S$.

Consider the following boundary value problem with Robin-type condition on the surface
$\partial \cD$.
Given a distribution $f$ in $\cD$, find a distribution $u$ in $\cD$ which satisfies
\begin{equation}
\label{eq.SL}
   \left\{
\begin{array}{rclcl}
     A u
   & =
   & f
   & \mbox{in}
   & \cD,
\\
     B u
   & =
   & 0
   & \mbox{on}
   & \partial \cD.
\end{array}
   \right.
\end{equation}
In order to get asymptotic results, it is necessary to put some restrictions on the operators
$A$ and
$B$.
Suppose that the matrix
$$
   \left( a_{i,j} (x) \right)_{\substack{i = 1, \ldots, n \\
                                         j = 1, \ldots, n}}
$$
is Hermitian and
\begin{equation}
\label{eq.ell}
   \sum_{i,j=1}^n a_{i,j} (x) \xi_i \xi_j
 \geq
   m\, |\xi|^2
\end{equation}
for all
   $(x,\xi) \in \cD \times (\R^n \setminus \{ 0 \})$,
with $m$ a positive constant independent on $x$ and $\xi$.
Estimate (\ref{eq.ell}) is nothing but the statement that the operator $A$ is strongly elliptic.
Note that inequality (\ref{eq.ell}) is weaker than the coercivity, i.e., the existence of a
constant $m$ such that
\begin{equation}
\label{eq.coercive}
   \sum_{i,j=1}^n a_{i,j} (x)\, \overline{\partial_i u} \partial_j u
 \geq
   m \sum_{j=1}^n |\partial_j u|^2
\end{equation}
for all $u \in C^1 (\overline{\cD})$, because $u$ may take on complex values.

On assuming that $a_{i,j} (x)$ are continuous up to the boundary of $\cD$ we consider the complex
vector field $c (x)$ on $\partial \cD$ with components
$$
   c_j (x) = \sum_{i=1}^n a_{i,j} (x) \nu_i (x),
$$
where
   $\nu (x) = (\nu_1 (x), \ldots, \nu_n (x))$
is the unit outward normal vector of $\partial \cD$ at $x \in \partial \cD$.
From condition (\ref{eq.ell}) it follows that there is a complex-valued function $b_1 (x)$ on the
boundary with the property that the difference
   $v (x) - b_1 (x) c (x)$
is orthogonal to $\nu (x)$ for almost all $x \in \partial \cD$.
In fact, the pointwise equality $(v - b_1 c, \nu) = 0$ just amounts to
$$
   b_1 (x)
 = \frac{\displaystyle \sum_{j=1}^{n} v_j (x) \nu_j (x)}
        {\displaystyle \sum_{i,j=1}^{n} a_{i,j} (x) \nu_i (x) \nu_j (x)}
$$
for $x \in \partial \cD$.
Obviously, $b_1 (x)$ is a bounded measurable function on $\partial \cD$ and the vector field
   $t (x) = v (x) - b_1 (x) c (x)$
takes on its values in the tangent hyperplane
   $T_x (\partial \cD)$
of $\partial \cD$ at $x$.
Summarizing we conclude that if the boundary of $\cD$ is a Lipschitz surface then
\begin{equation}
\label{eq.B}
   B (x,\partial) = b_1 (x) \partial_c + \partial_t + b_0 (x),
\end{equation}
where
   $b_1 \in L^\infty (\partial \cD)$
and
   $t (x)$ is a tangential vector field on $\partial \cD$ whose components belong to
   $L^\infty (\partial \cD)$.
By assumption, both $b_1$ and $t$ vanish on $S$.
Concerning the behavior of $b_1$ in $\partial \cD \setminus S$ we require that
   $b_1 (x) \neq 0$ for almost all $x \in \partial \cD \setminus S$
and
   $1 / b_1 \in L^1 (\partial \cD \setminus S)$.

Note that the Shapiro-Lopatinskii condition is violated on $\partial \cD \setminus S$
unless the coefficients $a_{i,j} (x)$ are real-valued.

Denote by $H^1 (\cD,S)$ the subspace of $H^1 (\cD)$ consisting of those functions whose
restriction to the boundary vanishes on $S$.
This space is Hilbert under the induced norm.
It is easily seen that smooth functions on $\overline{D}$ vanishing in a neighbourhood of
$\overline{S}$ are dense in $H^1 (\cD,S)$.
Since on $S$ the boundary operator reduces to $B = b_0$ and $b_0 (x) \neq 0$ for $x \in S$,
the functions of $H^1 (\cD)$ satisfying $Bu = 0$ on $\partial \cD$ belong to $H^1 (\cD,S)$.

As we want to study perturbation of self-adjoint operators we split both $a_0$ and $b_0$ into
two parts
$$
\begin{array}{rcl}
   a_0
 & =
 & a_{0,0} + \delta a_0,
\\
   b_0
 & =
 & b_{0,0} + \delta b_0,
\end{array}
$$
where
   $a_{0,0}$ is a non-negative bounded function in $\cD$
and
   $b_{0,0}$ a bounded function on $\partial \cD$ satisfying $b_{0,0} / b_1 \geq 0$.
If the functional
$$
   \| u \|_{\SL}
 = \Big(
   \int_{\cD} \sum_{i,j=1}^n a_{i,j}  \overline{\partial_i u} \partial_j u\, dx
 + \| \sqrt{a_{0,0}} u \|_{L^2 (\cD)}^2
 + \| \sqrt{b_{0,0} b_1^{-1}} u \|_{L^2 (\partial \cD \setminus S)}^2
   \Big)^{1/2}
$$
defines a norm on $H^1 (\cD,S)$, we denote by $H_{\SL} (\cD)$ the completion of $H^1 (\cD,S)$
with respect to this norm.
Then $H_{\SL} (\cD)$ is actually a Hilbert space with scalar product
$$
   (u,v)_{\SL}
 =  \int_{\cD} \sum_{i,j=1}^n a_{i,j} \partial_j u \overline{\partial_i v}\, dx
 + (a_{0,0} u, v)_{L^2 (\cD)}
 + (b_{0,0} b_1^{-1} u, v)_{L^2 (\partial \cD \setminus S)}.
$$

From now on we assume that the space $H_{\SL} (\cD)$ is continuously embedded into the Lebesgue
space $L^2 (\cD)$, i.e.,
\begin{equation}
\label{eq.incl.est}
   \| u \|_{L^2 (\cD)}
 \leq c\, \| u \|_{\SL}
\end{equation}
for all $u \in H_{\SL} (\cD)$, where $c$ is a constant independent of $u$.
This condition is not particularly restrictive, as examples below show.
If the coefficients $a_{i,j}$ are smooth up to $S$ then the functions of $H_{\SL} (\cD)$
belong actually to $H^1_{\mathrm{loc}} (\cD \cup S)$, for the Dirichlet problem for strongly
elliptic equations satisfies the Shapiro-Lopatinskii condition.

Write $\iota$ for the inclusion
\begin{equation}
\label{eq.incl}
   H_{\SL} (\cD) \hookrightarrow L^2 (\cD),
\end{equation}
which is continuous by (\ref{eq.incl.est}).
We use this inclusion to specify the dual space of $H_{\SL} (\cD)$ via the pairing in $L^2 (\cD)$.
More precisely, let $H_{\SL}^{-} (\cD)$ be the completion of $H^1 (\cD,S)$ with respect to the
norm
$$
   \| u \|_{H^-_{\SL} (\cD)}
 = \sup_{\substack{v \in H^1 (\cD,S) \\ v \ne 0}}
   \frac{|(v,u)_{L^2 (\cD)}|}{\| v \|_{\SL}}.
$$

\begin{rem}
\label{r.norms-}
As $H^1 (\cD,S)$ is dense in
   $H_{\SL} (\cD)$
and the norm $\|\cdot\|_{SL}$ majorizes $\| \cdot \|_{L^2 (\cD)}$ we see that
$$
   \| u \|_{H^-_{\SL} (\cD)}
 = \sup_{\substack{v \in H_{SL} (\cD) \\ v \ne 0}}
   \frac{|(v,u)_{L^2 (\cD)}|}{\| v \|_{\SL}}.
$$
\end{rem}

\begin{lem}
\label{l.dual.emb}
The space $L^2 (\cD)$ is continuously embedded into $H^-_{\SL} (\cD)$.
If inclusion (\ref{eq.incl}) is compact then the space $L^2 (\cD)$ is compactly embedded into
                                                       $H^-_{\SL} (\cD)$.
\end{lem}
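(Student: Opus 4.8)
The plan is to treat the two assertions in turn. The continuous embedding reduces at once to the Cauchy--Schwarz inequality combined with the standing hypothesis (\ref{eq.incl.est}), while the compactness statement is a duality argument that converts the assumed compactness of $\iota$ into compactness of the embedding $L^2 (\cD) \hookrightarrow H^-_{\SL} (\cD)$.

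For the continuous embedding I would fix $u \in H^1 (\cD,S)$ and estimate, for every nonzero $v \in H^1 (\cD,S)$,
$$
   |(v,u)_{L^2 (\cD)}|
 \leq \| v \|_{L^2 (\cD)}\, \| u \|_{L^2 (\cD)}
 \leq c\, \| v \|_{\SL}\, \| u \|_{L^2 (\cD)},
$$
the last step by (\ref{eq.incl.est}). Dividing by $\| v \|_{\SL}$ and taking the supremum over $v$ yields $\| u \|_{H^-_{\SL} (\cD)} \leq c\, \| u \|_{L^2 (\cD)}$. Since $C^\infty_{\comp} (\cD) \subset H^1 (\cD,S)$, the space $H^1 (\cD,S)$ is dense in $L^2 (\cD)$, so this bound extends by continuity to a bounded operator $L^2 (\cD) \to H^-_{\SL} (\cD)$ of norm at most $c$; it is injective because its image pairs with $v \in H_{\SL} (\cD)$ as $(v,u)_{L^2 (\cD)}$, so that $\| u \|_{H^-_{\SL} (\cD)} = 0$ forces $(v,u)_{L^2 (\cD)} = 0$ for all $v \in C^\infty_{\comp} (\cD)$ and hence $u = 0$.

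For the compactness, assume that (\ref{eq.incl}) is compact and let $(u_k)$ be a sequence in $L^2 (\cD)$ with $\| u_k \|_{L^2 (\cD)} \leq M$. Using Remark~\ref{r.norms-} I would rewrite, for all $k,l$,
$$
   \| u_k - u_l \|_{H^-_{\SL} (\cD)}
 = \sup_{\substack{v \in H_{\SL} (\cD) \\ \| v \|_{\SL} \leq 1}} |(\iota v, u_k - u_l)_{L^2 (\cD)}|
 = \sup_{w \in K} |(w, u_k - u_l)_{L^2 (\cD)}|,
$$
where $K$ denotes the closure in $L^2 (\cD)$ of the image under $\iota$ of the closed unit ball of $H_{\SL} (\cD)$. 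By the assumed compactness of $\iota$ the set $K$ is compact, hence totally bounded and separable. Choosing a countable dense subset of $K$ and using that each scalar sequence $\big( (w,u_k)_{L^2 (\cD)} \big)_k$ is bounded, a diagonal extraction produces a subsequence $(u_{k_j})$ along which $(w,u_{k_j})_{L^2 (\cD)}$ converges for every $w$ in that dense set.

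The main obstacle is to upgrade this pointwise convergence to convergence uniform over the compact set $K$, and this is exactly the step where the compactness of $\iota$ (equivalently, the total boundedness of $K$) is indispensable. Given $\varepsilon > 0$, I would cover $K$ by finitely many balls of radius $\varepsilon / (4M)$ centred at points of the dense set; for $w \in K$ and a nearby centre $w_i$ one controls $|(w - w_i, u_{k_j} - u_{k_l})_{L^2 (\cD)}|$ by $(\varepsilon / (4M)) \cdot 2M$ using the uniform bound $M$, while $|(w_i, u_{k_j} - u_{k_l})_{L^2 (\cD)}|$ is made small for large $j,l$ since there are only finitely many centres. This shows $\sup_{w \in K} |(w, u_{k_j} - u_{k_l})_{L^2 (\cD)}| \to 0$, so $(u_{k_j})$ is Cauchy in $H^-_{\SL} (\cD)$ and therefore convergent by completeness, proving the embedding compact. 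Equivalently, and more conceptually, one may observe that the embedding $L^2 (\cD) \hookrightarrow H^-_{\SL} (\cD)$ is, under the isometric identification of $H^-_{\SL} (\cD)$ with a subspace of the dual of $H_{\SL} (\cD)$, the composition of the Riesz isometry with the transpose of $\iota$, and then invoke Schauder's theorem that the transpose of a compact operator is compact.
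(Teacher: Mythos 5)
Your proof is correct, and while the first half coincides with the paper's, your main compactness argument takes a genuinely different route. For the continuous embedding you do exactly what the paper does — Cauchy--Schwarz plus (\ref{eq.incl.est}) gives $\| u \|_{H^-_{\SL} (\cD)} \leq c\, \| u \|_{L^2 (\cD)}$ — and you are in fact somewhat more careful: the paper states the bound directly for $u \in L^2 (\cD)$, whereas you prove it on the dense subspace $H^1 (\cD,S)$, extend by continuity, and verify injectivity of the extension. For compactness, the paper passes to the Hilbert-space adjoint $\iota^\ast : L^2 (\cD) \to H_{\SL} (\cD)$, which is compact along with $\iota$, establishes the isometric identity $\| u \|_{H^-_{\SL} (\cD)} = \| \iota^\ast (u) \|_{\SL}$ (formula (\ref{eq.iota*})), and concludes that every weakly convergent sequence in $L^2 (\cD)$ converges strongly in $H^-_{\SL} (\cD)$, compactness then following from weak sequential compactness of bounded sets in a Hilbert space. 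You instead argue by hand: writing the $H^-_{\SL}$ norm of $u_k - u_l$ as a supremum of $L^2$ pairings over the compact set $K = \overline{\iota (B)}$, extracting by a diagonal procedure a subsequence whose pairings converge on a countable dense subset of $K$, and upgrading to uniform convergence over $K$ via total boundedness — in effect reproving Schauder's theorem in this special case. Your argument is longer but entirely self-contained, requiring neither adjoints nor weak compactness; the paper's argument is shorter and, as a by-product, yields the identity (\ref{eq.iota*}), which is reused in the proof of Lemma \ref{l.dual}. Your closing ``conceptual'' alternative — identifying the embedding as the transpose of $\iota$ composed with the Riesz isometry and invoking Schauder's theorem — is essentially the paper's own argument, phrased in Banach-dual rather than Hilbert-adjoint language.
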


\begin{proof}
By definition and estimate (\ref{eq.incl.est}) we get
$$
   \| u \|_{H^-_{\SL} (\cD)}
 \leq  \sup_{\substack{v \in H^1 (\cD,S) \\ v \ne 0}}
       \frac{\| u \|_{L^2 (\cD)} \| v \|_{L^2 (\cD)}}{\| v \|_{\SL}}
 \leq c\, \| u \|_{L^2 (\cD)}
$$
for all $u \in L^2 (\cD)$,
   i.e., the space $L^2 (\cD)$ is continuously embedded into $H^-_{\SL} (\cD)$ indeed.

Suppose (\ref{eq.incl}) is compact.
Then the Hilbert space adjoint
$
   \iota^\ast :  L^2 (\cD) \hookrightarrow H_{\SL} (\cD)
$
is compact, too.
By Remark \ref{r.norms-}) we conclude that
\begin{eqnarray} \label{eq.iota*}
   \| u \|_{H^-_{\SL} (\cD)}
 & = &
   \sup_{\substack{v \in H^1 (\cD,S) \\ v \ne 0}}
   \frac{|(\iota (v),u)_{L^2 (\cD)}|}{\| v \|_{\SL}}
\nonumber
\\
 & = &
   \sup_{\substack{v \in H_{SL} (\cD) \\ v \ne 0}}
   \frac{|(v, \iota^\ast (u))_{\SL}|}{\| v \|_{\SL}}
\nonumber
\\
 & = &
   \| \iota^\ast (u) \|_{\SL}
\end{eqnarray}
for all $u \in L^2 (\cD)$.
Therefore, any weakly convergent sequence in $L^2 (\cD)$ converges in
                                             $H^-_{\SL} (\cD)$,
which shows the second part of the lemma.
\end{proof}

Since
   $C^\infty_{\mathrm{comp}} (\cD)$ is dense in $L^2 (\cD)$ and
   the norm $\| \cdot \|_{L^2 (\cD)}$ majorizes the norm
            $\| \cdot \|_{H^-_{\SL} (\cD)}$,
we conclude that $C^\infty_{\mathrm{comp}} (\cD)$ is dense in $H^-_{\SL} (\cD)$, too.

\begin{lem}
\label{l.dual}
The Banach space $H^-_{\SL} (\cD)$ is topologically isomorphic to the dual space
$H_{\SL} (\cD)'$ and the isomorphism is defined by the sesquilinear form
\begin{equation}
\label{eq.pair}
   (v,u) = \lim_{\nu \to \infty} (v,u_\nu)_{L^2 (\cD)}
\end{equation}
for $u \in H_{\SL}^- (\cD)$ and
    $v \in H_{\SL} (\cD)$
where $\{ u_\nu \}$ is any sequence in $H^1 (\cD,S)$ converging to $u$.
\end{lem}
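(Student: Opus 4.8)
The plan is to realize the duality through the canonical map $\Phi$ sending $u \in H^-_{\SL} (\cD)$ to the functional $v \mapsto (v,u)$ on $H_{\SL} (\cD)$, and to show that $\Phi$ is an isometric (sesquilinear) isomorphism onto $H_{\SL} (\cD)'$. First I would check that formula (\ref{eq.pair}) is meaningful. Given $u \in H^-_{\SL} (\cD)$, choose a sequence $\{ u_\nu \}$ in $H^1 (\cD,S)$ with $u_\nu \to u$ in $\| \cdot \|_{H^-_{\SL} (\cD)}$. For fixed $v \in H_{\SL} (\cD)$, the definition of the negative norm together with Remark~\ref{r.norms-} gives $|(v, u_\nu)_{L^2 (\cD)} - (v, u_\mu)_{L^2 (\cD)}| \leq \| v \|_{\SL}\, \| u_\nu - u_\mu \|_{H^-_{\SL} (\cD)}$, so $\{ (v, u_\nu)_{L^2 (\cD)} \}$ is Cauchy in $\C$ and converges; the same estimate applied to two approximating sequences shows the limit is independent of the choice of $\{ u_\nu \}$. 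The resulting form $(v,u)$ is sesquilinear and satisfies $|(v,u)| \leq \| v \|_{\SL}\, \| u \|_{H^-_{\SL} (\cD)}$, so $\Phi u := (\cdot, u)$ lies in $H_{\SL} (\cD)'$ and $\Phi$ is bounded with norm at most $1$.

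Next I would verify that $\Phi$ is in fact isometric. On the dense subspace $H^1 (\cD,S)$ the pairing reduces to $(v,u) = (v,u)_{L^2 (\cD)}$ (take the constant approximating sequence), so the operator norm of $\Phi u$ equals $\sup_{v \neq 0} |(v,u)_{L^2 (\cD)}| / \| v \|_{\SL}$; by Remark~\ref{r.norms-} this supremum may be taken over $v \in H_{\SL} (\cD)$ or over $v \in H^1 (\cD,S)$ alike, and in either case it is precisely $\| u \|_{H^-_{\SL} (\cD)}$. Thus $\| \Phi u \|_{H_{\SL} (\cD)'} = \| u \|_{H^-_{\SL} (\cD)}$ for $u \in H^1 (\cD,S)$, and since $H^1 (\cD,S)$ is dense in $H^-_{\SL} (\cD)$ and both sides are continuous, the isometry persists on all of $H^-_{\SL} (\cD)$. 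In particular $\Phi$ is injective and its range $R$ is a closed subspace of $H_{\SL} (\cD)'$.

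It remains to prove surjectivity, which I expect to be the crux. Since $R$ is closed it suffices to show it is dense, and I would argue by contradiction. If $R \neq H_{\SL} (\cD)'$, the Hahn--Banach theorem furnishes a nonzero functional on $H_{\SL} (\cD)'$ annihilating $R$; because $H_{\SL} (\cD)$ is a Hilbert space, hence reflexive, this functional is evaluation at some $v_0 \in H_{\SL} (\cD)$ with $v_0 \neq 0$. The annihilation condition reads $(v_0, u) = 0$ for every $u \in H^-_{\SL} (\cD)$; letting $u$ range over $C^\infty_{\mathrm{comp}} (\cD) \subset H^1 (\cD,S)$, for which $(v_0, u) = (v_0, u)_{L^2 (\cD)}$, we find that the image of $v_0$ in $L^2 (\cD)$ is orthogonal to a dense set, hence $v_0 = 0$ in $L^2 (\cD)$. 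The delicate point is to pass from $v_0 = 0$ in $L^2 (\cD)$ back to $v_0 = 0$ in $H_{\SL} (\cD)$: here I would invoke the injectivity of the embedding (\ref{eq.incl}), which is exactly the content of $H_{\SL} (\cD)$ being embedded into $L^2 (\cD)$ via (\ref{eq.incl.est}), so that $v_0 = 0$ already in $H_{\SL} (\cD)$, contradicting $v_0 \neq 0$. This forces $R = H_{\SL} (\cD)'$, and the displayed formula (\ref{eq.pair}) is merely $\Phi$ extended by continuity from $H^1 (\cD,S)$, giving the asserted topological isomorphism. The main obstacle is precisely this surjectivity step and the care it demands in view of the possible failure of coercivity (\ref{eq.coercive}): one must ensure the negative norm genuinely sees enough test functions, which is why the injectivity of the inclusion into $L^2 (\cD)$, granted by the standing embedding hypothesis, is indispensable.
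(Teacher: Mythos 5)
Your proof is correct, and it reaches the conclusion by a genuinely different route from the paper's at both substantive steps. For the isometry, you argue directly on the dense subspace $H^1 (\cD,S)$, where the pairing (\ref{eq.pair}) collapses to the $L^2 (\cD)$ inner product, so that $\| \Phi u \|_{H_{\SL} (\cD)'}$ is literally the defining supremum for $\| u \|_{H^-_{\SL} (\cD)}$ (Remark~\ref{r.norms-} letting you take the supremum over $H_{\SL} (\cD)$ or over $H^1 (\cD,S)$ indifferently), and you then extend by continuity; the paper instead routes through the adjoint $\iota^\ast$ of the inclusion, showing that $\iota^\ast \iota\, u_\nu$ converges in $H_{\SL} (\cD)$ to the Riesz representative $U$ of $f_u$ and invoking the identity (\ref{eq.iota*}). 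For surjectivity, the paper is constructive: given $f$ with Riesz representative $U_f$, it approximates $U_f$ by elements $\iota^\ast \iota\, u_\nu$ (the range of $\iota^\ast \iota$ being dense in $H_{\SL} (\cD)$), observes via (\ref{eq.iota*}) that $\{ u_\nu \}$ is then Cauchy in $H^-_{\SL} (\cD)$, and exhibits its limit $u_f$ as the preimage of $f$. You argue softly instead: the isometry gives closed range, and Hahn--Banach plus reflexivity of the Hilbert space $H_{\SL} (\cD)$ reduce density of the range to the statement that any $v_0 \in H_{\SL} (\cD)$ pairing to zero against all of $C^\infty_{\mathrm{comp}} (\cD)$ vanishes in $L^2 (\cD)$, hence in $H_{\SL} (\cD)$ by injectivity of (\ref{eq.incl}) --- an injectivity the paper itself asserts and uses in its own proof, so you assume no more than it does. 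Your route is more economical (no adjoint operator; it is the standard completion--duality argument) but non-constructive; the paper's route produces the preimage $u_f$ explicitly and the identification $f_u (v) = (v,U)_{\SL}$, which prefigures the operator $L_0$ of (\ref{eq.L0}) and its inverse in Lemma~\ref{l.solv.SL1}, where this machinery is reused. One stylistic remark: your closing comment about coercivity is beside the point, since neither the lemma nor either proof involves (\ref{eq.coercive}); the only standing hypothesis at work is the embedding estimate (\ref{eq.incl.est}).
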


That is,
   for every fixed $u \in H_{\SL}^- (\cD)$, pairing (\ref{eq.pair}) defines a
   continuous linear functional $f_u$ on $H_{\SL} (\cD)$
and,
   for each $f \in H_{\SL} (\cD)'$, there is a unique $u \in H_{\SL}^- (\cD)$ with
   $f (v)= f_u (v)$ for all $v \in H_{\SL} (\cD)$.
Moreover, the conjugate linear map $u \mapsto f_u$ is an isometry.

\begin{proof}
To show that the limit on the right-hand side of (\ref{eq.pair}) exists for each
fixed function $v \in H_{\SL} (\cD)$, is suffices to show that
   $\{ (v,u_\nu)_{L^2 (\cD)} \}$
is a Cauchy sequence.
By definition,
$$
   | (v,u_\nu - u_\mu)_{L^2 (\cD)} |
 \leq
   \| v \|_{SL} \| u_\nu - u_\mu \|_{H_{\SL}^- (\cD)}
 \to 0
$$
as $\nu, \mu \to \infty$, which is our claim.
Clearly, this limit does not depend on the particular sequence $\{ u_\nu \}$, for if
   $\| u_\nu \|_{H_{\SL}^- (\cD)} \to 0$,
then
   $| (v,u_\nu)_{L^2 (\cD)} | \to 0$
for all $v \in H_{\SL} (\cD)$.

From the definition it follows that
\begin{equation}
\label{eq.pair.est}
   |(v,u)|
 \leq
   \| u \|_{H_{\SL}^- (\cD)}\, \| v \|_{SL}
\end{equation}
for all $u \in H_{\SL}^- (\cD)$ and
        $v \in H_{\SL} (\cD)$.
Hence, for every fixed $u \in H_{\SL}^- (\cD)$, the formula
   $f_u (v) := (v,u)$
defines a continuous linear functional $f_u$ on $H_{\SL} (\cD)$, such that
$$
   \| f_u \|_{H_{\SL} (\cD)'} \leq \| u \|_{H_{\SL}^- (\cD)}.
$$

If $\{ u_\nu \} \subset H^1 (\cD,S)$ approximates an element $u$ in $H_{\SL}^- (\cD)$,
then equality (\ref{eq.iota*}) implies that the sequence
   $\{ \iota^\ast \iota\, u_\nu \}$
converges to a function $U$ in the space $H_{\SL} (\cD)$ and
\begin{eqnarray*}
   \| U \|_{SL}
 & = &
   \lim_{\nu \to \infty} \| \iota^\ast \iota\, u_\nu \|_{SL}
\\
 & = &
   \lim_{\nu \to \infty} \| u_\nu \|_{H^-_{\SL} (\cD)}
\\
 & = &
   \| u \|_{H^-_{\SL} (\cD)}.
\end{eqnarray*}
Moreover,
\begin{eqnarray*}
   f_u (v)
 & = &
   (v,u)
\\
 & = &
   \lim_{\nu \to \infty } (\iota\, v, \iota\, u_\nu)_{L^2 (\cD)}
\\
 & = &
   \lim_{\nu \to \infty } (v, \iota^\ast \iota\, u_\nu)_{SL}
\\
 & = &
   (v, U) _{SL}
 \end{eqnarray*}
for all $v \in H_{\SL} (\cD)$.
Now, the Riesz theorem yields
   $\| U \|_{SL} = \| f_u \|_{H_{\SL} (\cD)'}$
whence
   $\| f_u \|_{H_{\SL} (\cD)'} = \| u \|_{H^-_{\SL} (\cD)}$.

It remains to show that any continuous linear functional $f$ on $H_{\SL} (\cD)$ has the form
$f (v) = (v, u_f)$ for some $u_f \in H_{\SL}^- (\cD)$.
According to the Riesz theorem, for any $f \in H_{\SL} (\cD)'$ there is a unique element
   $U_f \in H_{\SL} (\cD)$,
such that
$$
   f (v) = (v, U_f)_{SL}
$$
for all $v \in  H_{\SL} (\cD)$.
Besides, $\| U_f \|_{SL} = \| f \|_{H_{\SL} (\cD)'}$.
By definition, the operator $\iota$ is injective and its image is dense in $L^2 (\cD)$.
Hence, the image of the operator $\iota^\ast \iota$ in $H_{\SL} (\cD)$ is dense, too.
Pick a sequence $\{ u_\nu  \} \subset H_{\SL} (\cD)$ with the property that
   $ \{ \iota^\ast \iota\, u_\nu \}$
converges to $U_f$.
Then, according to (\ref{eq.iota*}), $\{ u_\nu \}$ is a Cauchy sequence in $H^-_{\SL} (\cD)$,
and so it converges to an element $u_f$ in this space.
It as easy to see that $u_f$ is actually independent of the particular choice of the sequence
   $\{ u_\nu \}$.
Finally, we obtain
\begin{eqnarray*}
   (v,u_f)
 & = &
   \lim_{\nu \to \infty} (\iota\, v, \iota\, u_\nu)_{L^2 (\cD)}
\\
 & = &
   \lim_{\nu \to \infty} (v, \iota^\ast \iota\, u_\nu)_{SL}
\\
 & = &
   (v, U_f)_{SL}
 \\
 & = &
   f (v)
\end{eqnarray*}
for all $v \in H_{SL} (\cD)$, as desired.
\end{proof}

Note that $H_{\SL} (\cD)$ is reflexive, since it is a Hilbert space.
Hence it follows that
   $(H_{\SL}^- (\cD))' = H_{\SL} (\cD)$,
i.e., the spaces $H_{\SL} (\cD)$ and
                 $H_{\SL}^- (\cD)$
are dual to each other with respect to (\ref{eq.pair}).

Given any $u \in H^2 (\cD)$ and
          $v \in H^1 (\cD)$,
the Stokes formula yields
$$
   \int_{\partial \cD} \overline{v} \partial_c u\, ds
 = \int_{\cD}
   \sum_{i,j=1}^n \left( a_{i,j} \overline{\partial_i v} \partial_j u
                       + \overline{v} \partial_i (a_{i,j} \partial_j u \right) dx.
$$
On integrating by parts we see that
\begin{eqnarray*}
\lefteqn{
   (Au, v)_{L^2 (\cD)}
}
\\
 \!\! & \!\! = \!\! & \!\!
   \int_{\cD} \sum_{i,j=1}^n \! a_{i,j} \overline{\partial_i v} \partial_j u\, dx
 + \left( b_1^{-1} (\partial_t + b_0) u, v \right)_{L^2 (\partial \cD \setminus S)}
 + \Big( \sum_{j=1}^n \! a_{j} \partial_j u + a_0 u, v \Big)_{L^2 (\cD)}
\end{eqnarray*}
for all
   $u \in H^2 (\cD)$ and
   $v \in H^1 (\cD)$
satisfying the boundary condition of (\ref{eq.SL}).
Suppose
\begin{equation}
\label{eq.positive.part1}
   \Big|
   \left( b_1^{-1} (\partial_t + \delta b_0) u, v \right)_{L^2 (\partial \cD \setminus S)}
 + \Big( \sum_{j=1}^n \! a_{j} \partial_j u + \delta a_0\, u, v \Big)_{L^2 (\cD)}
   \Big|
 \leq
   c\, \| u \|_{\SL} \| v \|_{\SL}
\end{equation}
for all $u, v \in  H^1 (\cD,S)$, where $c$ is a positive constant independent of $u$ and $v$.
Then, for each fixed $u \in H_{\SL} (\cD)$, the sesquilinear form
\begin{eqnarray*}
\lefteqn{
   Q (u, v)
}
\\
 \!\! & \!\! = \!\! & \!\!
   \int_{\cD} \sum_{i,j=1}^n \! a_{i,j} \overline{\partial_i v} \partial_j u\, dx
 + \left( b_1^{-1} (\partial_t + b_0) u, v \right)_{L^2 (\partial \cD \setminus S)}
 + \Big( \sum_{j=1}^n \! a_{j} \partial_j u + a_0 u, v \Big)_{L^2 (\cD)}
\end{eqnarray*}
determines a continuous linear functional $f$ on $H_{\SL} (\cD)$ by
   $f (v) := \overline{Q (u,v)}$
for $v \in H_{\SL} (\cD)$.
By Lemma \ref{l.dual}, there is a unique element in $H_{\SL}^- (\cD)$,
   which we denote by $Lu$,
such that
$$
   f (v) = (v,Lu)
$$
for all $v \in H_{\SL} (\cD)$.
We have thus defined a linear operator
   $L : H_{\SL} (\cD) \to H_{\SL}^- (\cD)$.
From (\ref{eq.positive.part1}) it follows that $L$ is bounded.

The bounded linear operator $L_0 : H_{\SL} (\cD) \to H_{\SL}^- (\cD)$ defined in the
same way via the sesquilinear form $(\cdot,\cdot)_{\SL}$, i.e.,
\begin{equation}
\label{eq.L0}
   (v,u)_{\SL} = (v, L_0 u)
\end{equation}
for all $u, v \in H_{\SL} (\cD)$, corresponds to the case
   $a_j \equiv 0$ for all $j = 1, \ldots, n$,
   $a_0 = a_{0,0}$,
and
   $t \equiv 0$,
   $b_0 = b_{0,0}$.

We are thus lead to a weak formulation of problem (\ref{eq.SL}).
Given $f \in H_{\SL}^- (\cD)$, find $u \in H_{\SL} (\cD)$, such that
\begin{equation}
\label{eq.SL.w}
   \overline{Q (u,v)} = (v,f)
\end{equation}
for all $v \in H_{\SL} (\cD)$.

It should be noted that there is no need to assume the continuity of the coefficients
$a_{i,j}$ up to the boundary of $D$ in order to define the operator $L$ and to consider
the weak formulation of problem (\ref{eq.SL}).
Now one can handle problem (\ref{eq.SL.w}) by standard techniques of functional analysis,
   see for instance \cite[Ch.~3, \S \S~4-6]{LadyUral73}) for the coercive case.

Suppose $f \in L^2 (\cD)$.
If $u$ is a solution of (\ref{eq.SL.w}) then $Au = f$ holds in the sense of distributions
in $\cD$, for $C^\infty_{\mathrm{comp}} (\cD) \subset H_{\SL} (\cD)$.
Since $A$ is elliptic, we readily deduce that $u \in H^2_{\mathrm{loc}} (\cD)$ and the
equality $Au = f$ is actually satisfied almost everywhere in $\cD$.

If, in addition, $u \in H^2 (\cD)$ then
$$
   \left( (\partial_c + b_1^{-1} (\partial_t + b_0)) u, v \right)_{L^2 (\partial \cD \setminus S)}
 = 0
$$
for all $v \in H_{\SL} (\cD)$.
Since any smooth function $v$ in $\overline{\cD}$ whose support does not meet $S$ belongs to
$H_{\SL} (\cD)$, we conclude that
   $(b_1 \partial_c + \partial_t + b_0) u = 0$
on $\partial \cD \setminus S$.
Hence $Bu = 0$ on $\partial \cD$, for $u = 0$ and $b_1 = 0$ on $S$.

\begin{lem}
\label{l.solv.SL1}
Let
   estimate (\ref{eq.incl.est}) hold true.
Assume that
   $a_j \equiv 0$ for all $j = 1, \ldots, n$,
   $\delta a_0 = 0$,
and
   $t \equiv 0$,
   $\delta b_0 = 0$.
Then for each $f \in H^-_{\SL} (\cD)$ there is a unique solution $u \in H_{\SL} (\cD)$ to
problem (\ref{eq.SL.w}), i.e., the operator $L_0 : H_{\SL} (\cD) \to H_{\SL}^- (\cD)$ is
continuously invertible.
Moreover, the norms of both $L_0$ and its inverse $L_0^{-1}$ are equal to $1$.
\end{lem}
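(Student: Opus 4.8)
The plan is to observe that under the four vanishing hypotheses the sesquilinear form $Q$ degenerates into the scalar product of $H_{\SL}(\cD)$, so that $L_0$ becomes the canonical Riesz identification of the Hilbert space $H_{\SL}(\cD)$ with its dual, realized concretely as $H_{\SL}^-(\cD)$ through Lemma~\ref{l.dual}. Everything then reduces to the Riesz representation theorem in the symmetric, coercive case, with no recourse to Lax--Milgram or to any regularity result.

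First I would set $a_j \equiv 0$, $a_0 = a_{0,0}$, $t \equiv 0$ and $b_0 = b_{0,0}$ in the expression for $Q(u,v)$ and compare it term by term with $(u,v)_{\SL}$. The interior principal term matches because $a_{i,j} \overline{\partial_i v} \partial_j u = a_{i,j} \partial_j u\, \overline{\partial_i v}$, while the two remaining terms reduce to $(a_{0,0} u, v)_{L^2(\cD)}$ and $(b_{0,0} b_1^{-1} u, v)_{L^2(\partial \cD \setminus S)}$; hence $Q(u,v) = (u,v)_{\SL}$ for all $u, v \in H_{\SL}(\cD)$. Consequently the defining relation $(v, L_0 u) = \overline{Q(u,v)} = (v,u)_{\SL}$ is exactly~(\ref{eq.L0}), and solving the weak problem~(\ref{eq.SL.w}) for a datum $f$ amounts to solving $L_0 u = f$ in $H_{\SL}^-(\cD)$.

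Next I would show that $L_0$ is an isometry. By Lemma~\ref{l.dual} the norm on $H_{\SL}^-(\cD)$ is computed as a supremum of the pairing, so
$$
   \| L_0 u \|_{H_{\SL}^- (\cD)}
 = \sup_{\substack{v \in H_{\SL} (\cD) \\ v \neq 0}}
   \frac{|(v, L_0 u)|}{\| v \|_{\SL}}
 = \sup_{\substack{v \in H_{\SL} (\cD) \\ v \neq 0}}
   \frac{|(v, u)_{\SL}|}{\| v \|_{\SL}}
 = \| u \|_{\SL},
$$
the last equality being the Cauchy--Schwarz inequality together with the fact that the supremum is attained at $v = u$. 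In particular $L_0$ is injective and $\| L_0 \| = 1$. For surjectivity I would fix $f \in H_{\SL}^-(\cD)$ and note, via the pairing~(\ref{eq.pair}) and estimate~(\ref{eq.pair.est}), that $v \mapsto (v, f)$ is a bounded linear functional on the Hilbert space $H_{\SL}(\cD)$; the Riesz representation theorem then supplies a unique $u \in H_{\SL}(\cD)$ with $(v, f) = (v, u)_{\SL}$ for every $v$, which by nondegeneracy of the pairing is precisely $L_0 u = f$. Together with the isometry this makes $L_0$ a bijective isometry, whence $L_0^{-1}$ exists and is itself an isometry, giving $\| L_0^{-1} \| = 1$.

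I expect the only delicate point to be the bookkeeping around the conjugate-linear pairing~(\ref{eq.pair}): one must confirm that $v \mapsto (v,f)$ is genuinely \emph{linear} and bounded so that Riesz applies, and that the supremum defining $\| L_0 u \|_{H_{\SL}^- (\cD)}$ is attained at $v = u$. Both facts are already built into the isometry $\| f_u \|_{H_{\SL}(\cD)'} = \| u \|_{H_{\SL}^-(\cD)}$ furnished by Lemma~\ref{l.dual}, so the argument carries no serious analytic obstacle. Note in particular that neither compactness of the embedding~(\ref{eq.incl}) nor elliptic regularity is invoked here; estimate~(\ref{eq.incl.est}) enters only to guarantee that the spaces $H_{\SL}(\cD)$ and $H_{\SL}^-(\cD)$ are well defined.
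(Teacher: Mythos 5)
Your proof is correct and takes essentially the same route as the paper's own: you identify $Q(u,v)$ with $(u,v)_{\SL}$ so that the weak problem becomes $L_0 u = f$ with $L_0$ exactly the operator of (\ref{eq.L0}), you obtain the isometry $\| L_0 u \|_{H^-_{\SL} (\cD)} = \| u \|_{\SL}$ from the duality of Lemma \ref{l.dual}, and you get existence and uniqueness from the Riesz representation theorem. Your closing remark that estimate (\ref{eq.incl.est}) serves only to set up the spaces, with no compactness or regularity needed, is likewise consistent with the paper's argument.
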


\begin{proof}
Under the hypotheses of the lemma, (\ref{eq.SL.w}) is just a weak formulation of problem
(\ref{eq.SL}) with $A$ and $B$ replaced by
\begin{eqnarray*}
   A_0
 & = &
 - \sum_{i,j = 1}^n \partial_i (a_{i,j} \partial_j) + a_{0,0},
\\
   B_0
 & = &
   b_1 \partial_c + b_{0,0},
\end{eqnarray*}
respectively.
The corresponding bounded operator in Hilbert spaces just amounts to
   $L_0 : H_{\SL} (\cD) \to H_{\SL}^- (\cD)$
defined by (\ref{eq.L0}).
Its norm equals $1$, for, by Lemma \ref{l.dual}, we get
\begin{equation}
\label{eq.unit.norm}
   \| L_0 u \|_{H_{\SL}^- (\cD)}
 =
   \sup_{\substack{v \in H_{\SL} (\cD) \\ v \ne 0}}
   \frac{|(v, L_0 u)|}{\| v \|_{\SL}}
 =
   \sup_{\substack{v \in H_{\SL} (\cD) \\ v \ne 0}}
   \frac{|(v,u)_{\SL}|}{\| v \|_{\SL}}
 =
   \| u \|_{\SL}
\end{equation}
whenever $u \in H_{\SL} (\cD)$.

The existence and uniqueness of solutions to problem (\ref{eq.SL.w}) follows
immediately from the Riesz theorem on the general form of continuous linear functionals
on Hilbert spaces.
From (\ref{eq.unit.norm}) we conclude that $L_0$ is actually an isometry of
   $H^-_{\SL} (\cD)$ onto
   $H_{\SL} (\cD)$,
as desired.
\end{proof}

Consider the sesquilinear form on $H^-_{\SL} (\cD)$ given by
$$
   (u,v)_{H^-_{\SL} (\cD)} := (L_0^{-1} u, v)
$$
for $u, v \in H_{\SL}^- (\cD)$.
Since
\begin{equation}
\label{eq.L0.L0-}
   (L_0^{-1} u, v)
 = (L^{-1}_0 u, L_0 L^{-1}_0  v)
 = (L_0^{-1} u, L_0^{-1} v)_{\SL}
\end{equation}
for all $u, v \in H^-_{\SL} (\cD)$, the last equality being due to (\ref{eq.L0}), this form is
Hermitian.
Combining (\ref{eq.unit.norm}) and
          (\ref{eq.L0.L0-})
yields
$$
   \sqrt{(u, u)_{H^-_{\SL} (\cD)}} = \| u \|_{H^-_{\SL} (\cD)}
$$
for all $u \in H^-_{\SL} (\cD)$.
From now on we endow the space $H^-_{\SL} (\cD)$ with the scalar product
   $(\cdot,\cdot)_{H^-_{\SL} (\cD)}$.

\begin{lem}
\label{l.solv.SL2}
Let estimates (\ref{eq.incl.est}) and
              (\ref{eq.positive.part1})
be fulfilled.
If moreover the constant $c$ of (\ref{eq.positive.part1}) satisfies $c < 1$ then, for each
$f \in H^-_{\SL} (\cD)$, there exists a unique solution $u \in H_{\SL} (\cD)$ to problem
(\ref{eq.SL.w}), i.e., the operator $L : H_{\SL} (\cD) \to H^-_{\SL} (\cD)$ is contin\-uously
invertible.
\end{lem}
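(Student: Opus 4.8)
The plan is to realize $L$ as a small perturbation of the isometric isomorphism $L_0$ from \lemref{l.solv.SL1} and then to invert the perturbed operator by a Neumann series. First I would note that the sesquilinear form $Q$ decomposes as $Q(u,v) = (u,v)_{\SL} + R(u,v)$, where
$$
   R(u,v) = \left( b_1^{-1} (\partial_t + \delta b_0) u, v \right)_{L^2 (\partial \cD \setminus S)} + \Big( \sum_{j=1}^n a_j \partial_j u + \delta a_0\, u, v \Big)_{L^2 (\cD)}
$$
is precisely the form bounded in (\ref{eq.positive.part1}). This identity is just the splitting $a_0 = a_{0,0} + \delta a_0$, $b_0 = b_{0,0} + \delta b_0$ inserted into the definition of $Q$, together with the definition of the scalar product $(\cdot,\cdot)_{\SL}$.

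Next, for each fixed $u \in H_{\SL} (\cD)$ the map $v \mapsto \overline{R(u,v)}$ is a continuous linear functional on $H_{\SL} (\cD)$ by (\ref{eq.positive.part1}), so \lemref{l.dual} furnishes a unique element $\delta L\, u \in H^-_{\SL} (\cD)$ with $(v, \delta L u) = \overline{R(u,v)}$ for all $v$. Comparing with the defining relations $(v, Lu) = \overline{Q(u,v)}$ and $(v, L_0 u) = (v,u)_{\SL}$, and using $\overline{(u,v)_{\SL}} = (v,u)_{\SL}$, I obtain $L = L_0 + \delta L$. Estimate (\ref{eq.positive.part1}) then yields
$$
   \| \delta L\, u \|_{H^-_{\SL} (\cD)} = \sup_{v \neq 0} \frac{|R(u,v)|}{\| v \|_{\SL}} \leq c\, \| u \|_{\SL},
$$
that is, $\| \delta L \|_{H_{\SL} (\cD) \to H^-_{\SL} (\cD)} \leq c$.

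Now I would factor $L = L_0 (\Id + L_0^{-1} \delta L)$. Since \lemref{l.solv.SL1} asserts that $L_0$ is an isometry of $H_{\SL} (\cD)$ onto $H^-_{\SL} (\cD)$, its inverse is an isometry as well, whence $\| L_0^{-1} \delta L \|_{H_{\SL} (\cD) \to H_{\SL} (\cD)} \leq c < 1$. By the hypothesis $c < 1$ the Neumann series $\sum_{k=0}^\infty (-L_0^{-1} \delta L)^k$ converges in the operator norm and produces a bounded inverse of $\Id + L_0^{-1} \delta L$ on $H_{\SL} (\cD)$. Consequently $L = L_0 (\Id + L_0^{-1} \delta L)$ is continuously invertible, with $L^{-1} = (\Id + L_0^{-1} \delta L)^{-1} L_0^{-1}$; taking $u = L^{-1} f$ then solves (\ref{eq.SL.w}) uniquely for every $f \in H^-_{\SL} (\cD)$.

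There is no genuinely hard step here, the argument being the standard weak-perturbation scheme. The only points demanding care are the bookkeeping of complex conjugations when passing between $Q$, the scalar product $(\cdot,\cdot)_{\SL}$ and the duality pairing of \lemref{l.dual}, and the verification that $\delta L$ is well defined on all of $H_{\SL} (\cD)$ rather than merely on the dense subspace $H^1 (\cD,S)$, which is exactly what the uniform bound (\ref{eq.positive.part1}) secures. As an alternative one could bypass the factorisation and invoke the Lax--Milgram theorem directly: boundedness of $Q$ follows from $|Q(u,v)| \leq (1+c)\, \| u \|_{\SL} \| v \|_{\SL}$ and coercivity from $\mathrm{Re}\, Q(u,u) \geq (1-c)\, \| u \|_{\SL}^2$, the latter again relying on $c < 1$.
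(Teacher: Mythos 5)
Your proof is correct and follows essentially the same route as the paper: the paper's proof likewise writes $L = L_0 + \delta L$ with $\| \delta L \| \leq c < 1$, invokes $\| L_0^{-1} \| = 1$ from Lemma~\ref{l.solv.SL1}, and appeals to "a familiar argument" --- precisely the Neumann series you spell out. You have merely filled in the details (the conjugation bookkeeping, the isometry of the duality pairing from Lemma~\ref{l.dual}, and the explicit factorisation $L = L_0 (\Id + L_0^{-1} \delta L)$) that the paper leaves implicit.
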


\begin{proof}
If estimate (\ref{eq.positive.part1}) holds with $c < 1$ then the operator
   $L : H_{\SL} (\cD) \to H^-_{\SL} (\cD)$
corresponding to problem (\ref{eq.SL.w}) is easily seen to differ from $L_0$ by a bounded
operator
   $\delta L : H_{\SL} (\cD) \to H^-_{\SL} (\cD)$
whose norm does not exceed $c < 1$.
As $L_0$ is invertible and the inverse operator $L_0^{-1}$ has norm $1$, a familiar argument
shows that $L$ is invertible, too.
\end{proof}

Since
   $C^\infty_{\mathrm{comp}} (\cD) \hookrightarrow H_{\SL} (\cD) \hookrightarrow L^2 (\cD)$,
the elements of $H^-_{\SL} (\cD)$ are distributions in $\cD$ and any solution to problem
(\ref{eq.SL}) satisfies $Au = f$ in $\cD$ in the sense of distributions.
Though the boundary conditions are interpreted in a weak sense, they agree with those in
terms of restrictions to $\partial \cD$ if the solution is sufficiently smooth up to the
boundary, e.g. belongs to $C^1 (\overline{\cD})$.
In particular, if $L$ is invertible then the inverse operator maps $H^-_{\SL} (\cD)$ to
                                                                   $H_{\SL} (\cD)$
where elements $u$ satisfy $Bu = 0$ on $\partial \cD$ in a suitable sense.

\section{Completeness of root functions for weak perturbations}
\label{s.root}

We are now in a position to study the completeness of root functions related to problem
(\ref{eq.SL.w}). We begin with the self-adjoint operator $L_0$.
To this end we write $\iota'$ for the continuous embedding of $L^2 (\cD)$ into
                                                              $H^-_{\SL} (\cD)$,
as it is described by Lemma \ref{l.dual.emb}.

\begin{lem}
\label{l.L0.selfadj}
Suppose that estimate (\ref{eq.incl.est}) is fulfilled and
             inclusion (\ref{eq.incl}) is compact.
Then the inverse $L^{-1}_0$ of the operator given by (\ref{eq.L0}) induces compact positive
self-adjoint operators
$$
\begin{array}{rclcrcl}
   Q_{1}
 & =
 & \iota' \iota\, L^{-1}_{0}
 & :
 & H^-_{\SL} (\cD)
 & \to
 & H^-_{\SL} (\cD),
\\
   Q_{2}
 & =
 & \iota\, L^{-1}_{0}\, \iota'
 & :
 & L^2 (\cD)
 & \to
 & L^2 (\cD),
\\
   Q_{3}
 & =
 & L^{-1}_{0}\, \iota' \iota
 & :
 & H_{\SL} (\cD)
 & \to
 & H_{\SL} (\cD)
\end{array}
$$
which have the same systems of eigenvalues and eigenvectors.
Moreover, all eigenvalues are positive and there are orthonormal bases in
   $H_{\SL} (\cD)$,
   $L^2 (\cD)$ and
   $H^-_{\SL} (\cD)$
consisting of the eigenvectors.
\end{lem}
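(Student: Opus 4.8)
The plan is to reduce all three operators to the canonical form $T^\ast T$ or $T T^\ast$ for a suitable compact operator $T$, after which compactness, self-adjointness, non-negativity and the coincidence of spectra all become standard. The single computational input I would isolate first is the identification of Hilbert-space adjoints across the three scalar products. Writing $\iota^\ast : L^2 (\cD) \to H_{\SL} (\cD)$ for the adjoint of the embedding $\iota$, I claim that $\iota^\ast = L_0^{-1} \iota'$. To see this I fix $f \in L^2 (\cD)$ and $v \in H_{\SL} (\cD)$ and compute $(v, L_0^{-1} \iota' f)_{\SL}$; by (\ref{eq.L0}) this equals the pairing $(v, \iota' f)$, and by the definition of $\iota'$ together with (\ref{eq.pair}) the latter equals $(\iota v, f)_{L^2 (\cD)}$. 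Comparing with the defining relation $(\iota v, f)_{L^2 (\cD)} = (v, \iota^\ast f)_{\SL}$ of the adjoint yields $L_0^{-1} \iota' = \iota^\ast$. Equivalently $\iota' = L_0 \iota^\ast$, and on setting $J := \iota\, L_0^{-1} : H^-_{\SL} (\cD) \to L^2 (\cD)$ the same manipulation gives $J^\ast = \iota'$. This step, unwinding the pairing (\ref{eq.pair}) and the embedding $\iota'$ against three distinct scalar products, is the only place where the specific construction of $H^-_{\SL} (\cD)$ enters, and I expect it to be the main obstacle.

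Once these identifications are in hand the three operators acquire transparent forms. Indeed $Q_3 = L_0^{-1} \iota' \iota = \iota^\ast \iota$ on $H_{\SL} (\cD)$, while $Q_2 = \iota\, L_0^{-1} \iota' = \iota \iota^\ast$ on $L^2 (\cD)$, and $Q_1 = \iota' \iota\, L_0^{-1} = J^\ast J$ on $H^-_{\SL} (\cD)$, the last because $J^\ast = \iota'$ and $J = \iota L_0^{-1}$; in particular $Q_2 = \iota \iota^\ast = J J^\ast$ as well. Each of $Q_1, Q_2, Q_3$ is therefore of the form $S^\ast S$ or $S S^\ast$, hence self-adjoint and non-negative on the respective space; for instance $(Q_3 u, u)_{\SL} = \| \iota u \|_{L^2 (\cD)}^2$ and $(Q_2 f, f)_{L^2 (\cD)} = \| \iota^\ast f \|_{\SL}^2$. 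Compactness of all three follows from the hypothesis that $\iota$ is compact, since $\iota^\ast$ and $J$ are then compact and a compact operator composed with a bounded one stays compact (\lemref{l.2}).

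Next I would read off that the eigenvalues are strictly positive. Since $\iota$ is injective, $(Q_3 u, u)_{\SL} = \| \iota u \|^2_{L^2 (\cD)} > 0$ for $u \neq 0$, and since the range of $\iota$ contains $C^\infty_{\comp} (\cD)$ and is therefore dense in $L^2 (\cD)$, the adjoint $\iota^\ast$ is injective, so $(Q_2 f, f)_{L^2 (\cD)} = \| \iota^\ast f \|^2_{\SL} > 0$ for $f \neq 0$; likewise $J = \iota L_0^{-1}$ is injective, whence $Q_1$ is strictly positive. Thus none of the operators admits $0$ as an eigenvalue. The coincidence of the spectra is then the standard fact that $S^\ast S$ and $S S^\ast$ share the same nonzero eigenvalues, combined with the explicit intertwining $Q_3 = L_0^{-1} Q_1 L_0$, which follows at once from $\iota' = L_0 \iota^\ast$ and exhibits $Q_1$ and $Q_3$ as unitarily equivalent through the isometry $L_0$ of \lemref{l.solv.SL1}. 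Concretely, if $Q_3 u = \lambda u$ with $\lambda > 0$ then $\iota u$ is an eigenvector of $Q_2$ and $L_0 u$ an eigenvector of $Q_1$ for the same $\lambda$, and these correspondences are bijective on eigenspaces; this is the precise sense of \emph{the same systems of eigenvalues and eigenvectors}.

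Finally I would invoke the spectral theorem for compact self-adjoint operators (\thmref{t.spectral}) applied to each $Q_i$ on its Hilbert space. It furnishes an orthonormal basis of eigenvectors; because each $Q_i$ is injective there is no kernel, so every basis vector carries a necessarily positive eigenvalue. This yields the asserted orthonormal bases of $H_{\SL} (\cD)$, $L^2 (\cD)$ and $H^-_{\SL} (\cD)$, and completes the argument.
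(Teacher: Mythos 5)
Your proposal is correct and is essentially the paper's own argument in different packaging: the paper's sesquilinear-form identities $(Q_1 u, v)_{H^-_{\SL} (\cD)} = (\iota L_0^{-1} u, \iota L_0^{-1} v)_{L^2 (\cD)}$, $(Q_2 u, v)_{L^2 (\cD)} = (L_0^{-1} \iota' u, L_0^{-1} \iota' v)_{\SL}$ and $(Q_3 u, v)_{\SL} = (\iota u, \iota v)_{L^2 (\cD)}$ are precisely your factorizations $Q_1 = J^\ast J$, $Q_2 = \iota \iota^\ast$, $Q_3 = \iota^\ast \iota$, resting on the same inputs (\ref{eq.L0}) and (\ref{eq.L0.L0-}). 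Your explicit identification $\iota^\ast = L_0^{-1} \iota'$ and the intertwining $Q_1 L_0 = L_0 Q_3$ merely make transparent what the paper handles by direct computation and by the brief injectivity remark, so no substantive difference remains.
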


\begin{proof}
Since $\iota$ is compact and
      $\iota'$,
      $L^{-1}_{0}$
bounded, all the operators $Q_1$,
                           $Q_2$,
                           $Q_3$
are compact.

Recall that we endow the space $H^-_{\SL} (\cD)$ with the scalar product
   $(\cdot,\cdot)_{H^-_{\SL} (\cD)}$.
Then, by (\ref{eq.L0.L0-}),
\begin{eqnarray*}
   (Q_{1} u, v)_{H_{\SL}^- (\cD)}
 & = &
   \overline{(v, \iota' \iota\, L^{-1}_{0} u)_{H_{\SL}^- (\cD)}}
\\
 & = &
   \overline{(L^{-1}_{0} v, \iota' \iota\, L^{-1}_{0} u)}
\\
 & = &
   (\iota L^{-1}_{0} u , \iota L^{-1}_{0} v)_{L^2 (\cD)},
\end{eqnarray*}
and
\begin{eqnarray*}
   (u, Q_{1} v)_{H_{\SL}^- (\cD)}
 & = &
   \overline{(Q_{1} v, u)_{H_{\SL}^- (\cD)}}
\\
 & = &
   (\iota L^{-1}_{0} u, \iota L^{-1}_{0} v)_{L^2 (\cD)}
\end{eqnarray*}
for all $u,v \in H^-_{\SL} (\cD)$, i.e., the operator $Q_{1}$ is self-adjoint.

Using (\ref{eq.L0}) we get
\begin{eqnarray*}
   (Q_{2} u, v)_{L^2 (\cD)}
 & = &
   (\iota (L^{-1}_{0} (\iota' u)), v)_{L^2 (\cD)}
\\
 & = &
   (L^{-1}_{0} (\iota' u), \iota' v)
\\
 & = &
   (L^{-1}_{0} (\iota' u), L^{-1}_{0} (\iota' v))_{\SL}
\end{eqnarray*}
and
\begin{eqnarray*}
   (u, Q_{2} v)_{L^2 (\cD)}
 & = &
   \overline{(Q_{2} v, u)_{L^2 (\cD)}}
\\
 & = &
   (L^{-1}_{0} (\iota' u), L^{-1}_{0} (\iota' v))_{\SL}
\end{eqnarray*}
for all $u, v \in L^2 (\cD)$, i.e., the operator $Q_{2}$ is self-adjoint.

On applying (\ref{eq.L0}) once again we obtain
\begin{eqnarray*}
   (Q_{3} u, v)_{\SL}
 & = &
   (L^{-1}_{0} (\iota' \iota\, u), v)_{\SL}
\\
 & = &
   (\iota' \iota\, u, v)
\\
 & = &
   (\iota u, \iota v)_{L^2 (\cD)}
\end{eqnarray*}
and
\begin{eqnarray*}
   (u, Q_{3} v)_{\SL}
 & = &
   \overline{(Q_{3} v, u)_{\SL}}
\\
 & = &
   (\iota u, \iota v)_{L^2 (\cD)}
\end{eqnarray*}
for all $u, v \in H_{\SL} (\cD)$, which establishes the self-adjointness of $Q_{3}$.

Finally, as the operator $L_0^{-1}$ is injective, so are the operators $Q_1$,
                                                                       $Q_2$ and
                                                                       $Q_3$.
Hence, all their eigenvectors $\{ u_\nu \}$ belong to the space $H_{\SL} (\cD)$,
for $L_0^{-1} u_\nu$ lies in $H_{\SL} (\cD)$.
From the injectivity of $\iota$ and $\iota'$ we conclude that the systems of eigenvalues
and eigenvectors of $Q_{1}$, $Q_{2}$ and $Q_{3}$ coincide.
The last part of the lemma follows from Theorem \ref{t.spectral}.
\end{proof}

Our next goal is to apply Theorem \ref{t.keldysh} for studying the completeness of root functions
of weak perturbations of $Q_j$.

\begin{thm}
\label{p.root.func.1}
If the operator
   $Q_{1} : H^-_{\SL} (\cD) \to H^-_{\SL} (\cD)$
is of finite order then,
   for any invertible operator of the type
$
   L_0 + \delta L : H_{\SL} (\cD) \to H^-_{\SL} (\cD)
$
with a compact operator
$
   \delta L : H_{\SL} (\cD) \to H^-_{\SL} (\cD),
$
the system of root functions of the compact operator
$$
   P_{1} = \iota' \iota\, (L_0 + \delta L)^{-1} :  H^-_{\SL} (\cD) \to H^-_{\SL} (\cD)
$$
is complete in the spaces $H^-_{\SL} (\cD)$,
                          $L^2 (\cD)$ and
                          $H_{\SL} (\cD)$.
\end{thm}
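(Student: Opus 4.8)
The plan is to deduce the statement from Keldysh's theorem (Theorem \ref{t.keldysh}) by exhibiting $P_1$ as a weak perturbation $Q_1 (\Id + \delta A)$ of the compact self-adjoint operator $Q_1$ of Lemma \ref{l.L0.selfadj}. On $H^-_{\SL} (\cD)$ one has $(L_0 + \delta L)(L_0 + \delta L)^{-1} = \Id$, whence $L_0 (L_0 + \delta L)^{-1} = \Id - \delta L (L_0 + \delta L)^{-1}$. Since $Q_1 L_0 = \iota' \iota\, L_0^{-1} L_0 = \iota' \iota$, this gives
$$
   P_1
 = \iota' \iota\, (L_0 + \delta L)^{-1}
 = Q_1 L_0 (L_0 + \delta L)^{-1}
 = Q_1 (\Id + \delta A),
 \qquad
   \delta A := - \delta L (L_0 + \delta L)^{-1}.
$$
Here $\delta A : H^-_{\SL} (\cD) \to H^-_{\SL} (\cD)$ is compact, being the composition of the bounded operator $(L_0 + \delta L)^{-1} : H^-_{\SL} (\cD) \to H_{\SL} (\cD)$ with the compact operator $\delta L : H_{\SL} (\cD) \to H^-_{\SL} (\cD)$; consequently $P_1 = Q_1 (\Id + \delta A)$ is compact. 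By Lemma \ref{l.L0.selfadj} the operator $Q_1$ is compact, self-adjoint and positive, and it is of finite order by hypothesis, while $P_1$ is injective because $\iota$, $\iota'$ are injective and $L_0 + \delta L$ is invertible. Theorem \ref{t.keldysh} then applies with $A_0 = Q_1$ and yields the completeness of the root functions of $P_1$ in $H^-_{\SL} (\cD)$.

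To transfer this to the remaining spaces I first observe that, as $P_1$ is injective, every root function belongs to a generalised eigenspace $E_\lambda = \ker (P_1 - \lambda \Id)^m$ with $\lambda \neq 0$; each such $E_\lambda$ is $P_1\,$-invariant with $P_1$ acting invertibly on it, so $E_\lambda = P_1 E_\lambda \subset P_1 (H^-_{\SL} (\cD)) = \iota' \iota\, (H_{\SL} (\cD))$. Thus every root function of $P_1$ has the form $u = \iota' \iota\, h$ with $h \in H_{\SL} (\cD)$, that is, it is the image under the Gelfand-triple embeddings $H_{\SL} (\cD) \hookrightarrow L^2 (\cD) \hookrightarrow H^-_{\SL} (\cD)$ of one element $h$. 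In parallel with $Q_2, Q_3$ I introduce
$$
   P_2 = \iota\, (L_0 + \delta L)^{-1} \iota' : L^2 (\cD) \to L^2 (\cD),
 \qquad
   P_3 = (L_0 + \delta L)^{-1} \iota' \iota : H_{\SL} (\cD) \to H_{\SL} (\cD),
$$
and record the intertwining identities $\iota' P_2 = P_1 \iota'$ and $(L_0 + \delta L) P_3 = P_1 (L_0 + \delta L)$, both reducing to $\iota' \iota$. A one-line induction on the length of a Jordan chain, using these identities together with the injectivity of $\iota'$ and of $L_0 + \delta L$, then identifies the root functions of $P_1$, $P_2$ and $P_3$ as the systems $\{ \iota' \iota\, h_j \}$, $\{ \iota\, h_j \}$ and $\{ h_j \}$ attached to a single family $\{ h_j \} \subset H_{\SL} (\cD)$, where $h_j = (L_0 + \delta L)^{-1} u_j$.

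Completeness in $H_{\SL} (\cD)$ is then immediate: the topological isomorphism $(L_0 + \delta L)^{-1} : H^-_{\SL} (\cD) \to H_{\SL} (\cD)$ carries the dense span of $\{ u_j \}$ onto the span of $\{ h_j \}$, which is therefore dense in $H_{\SL} (\cD)$. The genuinely delicate step, and the main obstacle, is the passage to $L^2 (\cD)$, since $\iota' : L^2 (\cD) \to H^-_{\SL} (\cD)$ is only injective with dense range, not a homeomorphism, so density of $\{ u_j \} = \{ \iota' w_j \}$ in $H^-_{\SL} (\cD)$ does not push back directly to density of $\{ w_j \}$ in $L^2 (\cD)$. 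To overcome this I would argue by duality: if $g \in L^2 (\cD)$ is orthogonal to all $w_j = \iota\, h_j$, then the adjoint relation gives $0 = (g, \iota\, h_j)_{L^2 (\cD)} = (\iota^\ast g, h_j)_{\SL}$ for every $j$, so the completeness of $\{ h_j \}$ in $H_{\SL} (\cD)$ forces $\iota^\ast g = 0$; and since $\iota$ has dense range in $L^2 (\cD)$ (its image contains $C^\infty_{\comp} (\cD)$), the adjoint $\iota^\ast$ is injective, whence $g = 0$. This establishes completeness in $L^2 (\cD)$ and finishes the proof.
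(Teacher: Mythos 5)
Your proposal is correct, and its core is exactly the paper's argument: the decomposition $P_1 = Q_1\,(\Id - \delta L\, (L_0 + \delta L)^{-1})$ (formula (\ref{eq.perturbation}) in the paper) followed by Keldysh's Theorem~\ref{t.keldysh} is the paper's first step verbatim, and your transfer of completeness to $H_{\SL} (\cD)$ via the topological isomorphism $(L_0 + \delta L)^{-1}$ is the same mechanism the paper runs by hand: it approximates $(L_0 + \delta L) u$ by combinations $f_k$ of root functions and notes that $(L_0 + \delta L)^{-1} f_k$, which is $P_1 f_k$ viewed in $H_{\SL} (\cD)$, again lies in the span of root functions. Your observation that $P_1 E_\lambda = E_\lambda$ for $\lambda \neq 0$ replaces the paper's binomial-formula computation showing $u_\nu \in H_{\SL} (\cD)$, to the same effect; the intertwining operators $P_2$, $P_3$ are a tidy but inessential repackaging. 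The one genuine divergence is the final step. For completeness in $L^2 (\cD)$ you argue by duality: a $g \in L^2 (\cD)$ orthogonal to all $\iota\, h_j$ satisfies $(\iota^\ast g, h_j)_{\SL} = 0$, hence $\iota^\ast g = 0$ by density of the span of $\{ h_j \}$ in $H_{\SL} (\cD)$, hence $g = 0$ since $\iota$ has dense range. This is valid, but the step you single out as ``the genuinely delicate one'' is precisely where the paper is most economical: since $C^\infty_{\comp} (\cD) \subset H_{\SL} (\cD)$ is dense in $L^2 (\cD)$ and the embedding $\iota$ is continuous by (\ref{eq.incl.est}), density of the span in the $\| \cdot \|_{\SL}$ norm immediately yields density in the $L^2 (\cD)$ norm --- a triangle-inequality argument with no appeal to adjoints. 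Both routes are sound; yours trades that two-line observation for an equally short orthogonality argument, so nothing is lost, but nothing is gained either.
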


\begin{proof}
By assumption there is a bounded inverse
   $(L_0 + \delta L)^{-1} : H^-_{\SL} (\cD) \to H_{\SL} (\cD)$.
Since
$$
   \Id - L_0 (L_0 + \delta L)^{-1} = \delta L\, (L_0 + \delta L)^{-1},
$$
we conclude that
\begin{equation}
\label{eq.perturbation}
\begin{array}{rcl}
   L_{0}^{-1} - (L_0 + \delta L) ^{-1}
 & =
 & L_0^{-1} \left( \delta L\, (L_0 + \delta L)^{-1} \right),
\\
   Q_{1} - P_{1}
 & =
 & Q_{1} \left( \delta L\, (L_0 + \delta L)^{-1} \right).
\end{array}
\end{equation}
From the
   compactness of $\delta L$ and
   boundedness of $(L_0 + \delta L)^{-1}$
it follows that the operator
$$
   \delta L\, (L_0 + \delta L)^{-1} : H^-_{\SL} (\cD) \to H^-_{\SL} (\cD)
$$
is compact.

Hence, $P_{1}$ is an injective weak perturbation of the compact self-adjoint operator
$Q_{1}$.
If in addition the order of $Q_{1}$ is finite then Theorem \ref{t.keldysh} implies that
the countable system  $\{ u_\nu \}$ of root functions related to the operator $P_{1}$
is complete in the Hilbert space $H^-_{\SL} (\cD)$.

Pick a root function $u_\nu$ of the operator $P_{1}$ corresponding to an eigenvalue
$\lambda_\nu$.
Note that $\lambda_\nu \ne 0$, for the operator $(L_0 + \delta L)^{-1}$ is injective.
By definition there is a natural number $m$, such that
$
   (P_{1} - \lambda_\nu \Id)^m u_\nu = 0.
$
Using the binomial formula yields
$$
   \sum_{j=0}^m \Big( \substack{m \\ j} \Big) \lambda_\nu^{m-j} P_{1}^j u_\nu = 0.
$$
In particular, since $\lambda_\nu \ne 0$, we get
$$
   u_\nu
 = \sum_{j=1}^m
   \Big( \substack{m \\ j} \Big) \lambda_\nu^{-j} (\iota' \iota\, (L_0 + \delta L)^{-1})^j u_{\nu}.
$$
Hence, $u_\nu \in H_{\SL} (\cD)$ because the range of the operator $(L_0 + \delta L)^{-1}$
lies in the space $H_{\SL} (\cD)$.

We have thus proved that $\{ u_\nu \} \subset H_{\SL} (\cD)$.
Our next concern will be to show that the linear span
   ${\mathcal L} (\{ u_\nu \})$
of the system $\{ u_\nu \}$ is dense in $H_{\SL} (\cD)$
   (cf. Proposition 6.1 of \cite{Agra11a} and \cite[p.~12]{Agra11c}).
For this purpose, pick $u \in H_{\SL} (\cD)$.
As $L_0 + \delta L$ maps $H_{\SL} (\cD)$ continuously onto
                      $H^-_{\SL} (\cD)$,
we get $(L_0 + \delta L) u \in H^-_{\SL} (\cD)$.
Hence, there is a sequence $\{ f_k \} \subset {\mathcal L} (\{ u_\nu \})$ converging to
$(L_0 + \delta L) u$ in $H^-_{\SL} (\cD)$.
On the other hand, the inverse $(L_0 + \delta L)^{-1}$ maps $H^-_{\SL} (\cD)$ continuously to
                                                            $H_{\SL} (\cD)$,
and so the sequence
$$
   (L_0 + \delta L)^{-1} f_k = (L_0 + \delta L)^{-1} \iota' \iota\, f_k
$$
converges to $u$ in $H_{\SL} (\cD)$.

If now $u_{\nu_0} \in {\mathcal L} (\{ u_\nu \})$ corresponds to an eigenvalue $\lambda_0$
of multiplicity $m_0$ then the vector $v_{\nu_{0}} = P_1  u_{\nu_0}$ satisfies
$$
   (P_1 - \lambda_0 \Id)^{m_0} v_{\nu_{0}}
 = (P_1 - \lambda_0 \Id)^{m_0+1} u_{\nu_{0}} + \lambda_0 (P_1 - \lambda_0 \Id)^{m_0} u_{\nu_{0}}
 = 0.
$$
Thus, the operator $P_1$ maps ${\mathcal L} (\{ u_\nu \})$ to
                              ${\mathcal L} (\{ u_\nu \})$
itself.
Therefore, the sequence
   $\{ \iota' \iota\, (L_0 + \delta L)^{-1} f_k \}$
still belongs to ${\mathcal L} (\{ u_\nu \})$ and we can think of
   $\{ (L_0 + \delta L)^{-1} f_k \} $
as sequence of linear combinations of root functions of $P_1$ converging to $u$.
These arguments show that the subsystem
$
   (L_0 + \delta L)^{-1}\, {\mathcal L} (\{ u_\nu \}) \subset {\mathcal L} (\{ u_\nu \})
$
is dense in $H_{\SL} (\cD)$.

Finally, since
   $C^\infty_{\mathrm{comp}} (\cD) \subset H_{\SL} (\cD)$
and
   $C^\infty_{\mathrm{comp}} (\cD)$ is dense in the Lebesgue space $L^2 (\cD)$,
the space $H_{\SL} (\cD)$ is dense in $L^2 (\cD)$ as well.
This proves the completeness of the system of root functions in $L^2 (\cD)$.
\end{proof}

Similar assertions are also true for the weak perturbations of the operators $Q_2$ and
                                                                             $Q_3$.

The operator
   $L_0 + \delta L : H_{\SL} (\cD) \to H^-_{\SL} (\cD)$
with a compact operator $\delta L$ fails to be injective in general, and so
Theorem \ref{p.root.func.1} does not apply.
However, as $L_0$ is continuously invertible, we conclude that $L = L_0 + \delta L$ is
Fredholm.
In particular, there is a constant $c$, such that
\begin{equation}
\label{eq.Fredholm}
   \| u \|_{\SL}
 \leq
   c\, \left( \| Lu \|_{H_{\SL}^- (\cD)} + \| u \|_{H_{\SL}^- (\cD)} \right)
\end{equation}
for all $u \in H_{\SL} (\cD)$.

We next extend Theorem \ref{p.root.func.1} to Fredholm operators.
To this end denote by $T$ the unbounded linear operator
   $H^-_{\SL} (\cD) \to H^-_{\SL} (\cD)$
with domain $\cD_{T} = H_{\SL} (\cD)$ which maps an element $u \in \cD_{T}$ to $Lu$.
The operator $T$ is clearly closed because of inequality (\ref{eq.Fredholm}).
It is densely defined as $H^1 (\cD,S) \subset H_{\SL} (\cD)$ is dense in $H^-_{\SL} (\cD)$.
It is well known that the null space of $T$ is finite dimensional in $H_{\SL} (\cD)$ and
                      its range is closed in $H^-_{\SL} (\cD)$.

When speaking on eigen- and root functions $u$ of the operator $T$ we always assume that
$u \in \cD_{T}$ and
   $(T - \lambda \Id)^{j} u \in \cD_{T}$
for all $j = 1, \ldots, m-1$.

Let $T_0: H^-_{\SL} (\cD) \to H^-_{\SL} (\cD)$ correspond to the self-adjoint operator
$L_0$.
The operator $T_0$ is obviously continuously invertible and the inverse operator coincides
with $\iota' \iota\, L_0^{-1} = Q_1$.

\begin{lem}
\label{l.spectrum.T0}
The spectrum of the operator $T_0$ consists of the points
   $\mu_\nu = \lambda_\nu^{-1}$
in ${\mathbb R}_{> 0}$, where $\lambda_\nu$ are the eigenvalues of $Q_1$.
\end{lem}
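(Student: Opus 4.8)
The plan is to identify the spectrum of $T_0$ with the reciprocals of the non-zero spectral values of its inverse. By \lemref{l.L0.selfadj} and \thmref{t.spectral}, the inverse operator $T_0^{-1} = Q_1$ is compact, positive and self-adjoint on $H^-_{\SL} (\cD)$, so its spectrum is $\sigma (Q_1) = \{ 0 \} \cup \{ \lambda_\nu \}$ with $\lambda_\nu > 0$ and $\lambda_\nu \to 0$. Since $T_0$ is continuously invertible we have $0 \in \rho (T_0)$, and so it will suffice to show that, for $\mu \neq 0$, the point $\mu$ belongs to $\sigma (T_0)$ precisely when $\mu^{-1}$ belongs to $\sigma (Q_1)$.

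For the inclusion $\{ \lambda_\nu^{-1} \} \subseteq \sigma (T_0)$ I would argue directly on eigenvectors. If $Q_1 u_\nu = \lambda_\nu u_\nu$ with $u_\nu \neq 0$, then $u_\nu = \lambda_\nu^{-1} Q_1 u_\nu$ lies in the range of $Q_1 = T_0^{-1}$, that is, in the domain $\cD_{T_0} = H_{\SL} (\cD)$, and applying $T_0$ gives $T_0 u_\nu = \lambda_\nu^{-1} u_\nu$. Hence every $\mu_\nu = \lambda_\nu^{-1}$ is an eigenvalue of $T_0$, in particular a point of its spectrum.

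For the reverse inclusion $\sigma (T_0) \setminus \{ 0 \} \subseteq \{ \lambda_\nu^{-1} \}$ the main work is to build the resolvent of $T_0$ out of that of $Q_1$. I would first record the two identities
$$
   (T_0 - \mu \Id)\, Q_1 = \Id - \mu Q_1 \ \mbox{ on } H^-_{\SL} (\cD), \qquad
   Q_1\, (T_0 - \mu \Id) = \Id - \mu Q_1 \ \mbox{ on } \cD_{T_0},
$$
which are immediate from $T_0 Q_1 = \Id$ on $H^-_{\SL} (\cD)$ and $Q_1 T_0 = \Id$ on $\cD_{T_0}$. Now fix $\mu \neq 0$ with $\mu^{-1} \in \rho (Q_1)$ and put $R = (Q_1 - \mu^{-1} \Id)^{-1}$, a bounded operator commuting with $Q_1$. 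Setting $W = - \mu^{-1} Q_1 R = - \mu^{-1} R Q_1$ and using $\Id - \mu Q_1 = - \mu (Q_1 - \mu^{-1} \Id)$ together with the two identities, one checks that $(T_0 - \mu \Id)\, W = \Id$ on $H^-_{\SL} (\cD)$ and $W\, (T_0 - \mu \Id) = \Id$ on $\cD_{T_0}$. Thus $W$ is a bounded two-sided inverse of $T_0 - \mu \Id$, so $\mu \in \rho (T_0)$. Contrapositively, any non-zero $\mu \in \sigma (T_0)$ forces $\mu^{-1} \in \sigma (Q_1) = \{ 0 \} \cup \{ \lambda_\nu \}$; as $\mu^{-1} \neq 0$, this means $\mu = \lambda_\nu^{-1}$ for some $\nu$.

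Combining the two inclusions with $0 \in \rho (T_0)$ yields $\sigma (T_0) = \{ \mu_\nu = \lambda_\nu^{-1} \}$, and these points lie in $\R_{> 0}$ because the $\lambda_\nu$ are positive. I expect the only real care to be the unbounded-operator bookkeeping: keeping track of which of the two displayed identities holds on all of $H^-_{\SL} (\cD)$ and which only on the domain $\cD_{T_0}$, verifying that $R$ (and hence $W$) genuinely commutes with $Q_1$, and making sure that although $0 \in \sigma (Q_1)$ the reciprocal correspondence legitimately excludes $0$ from $\sigma (T_0)$.
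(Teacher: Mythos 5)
Your proof is correct, and it rests on exactly the same idea as the paper's: exploiting $T_0^{-1} = Q_1$ to factor $T_0 - \mu \Id$ through $Q_1 - \mu^{-1} \Id$, which is precisely the one-line identity $(T_0 - \lambda \Id) u = -\lambda \bigl( Q_1 - \lambda^{-1} \Id \bigr) T_0 u$ that the paper records on $\cD_{T_0}$. You simply carry out in full the bookkeeping the paper leaves implicit (the eigenvector direction, the explicit two-sided resolvent $W = -\mu^{-1} Q_1 R$, and the exclusion of $0$ via invertibility of $T_0$), all of which is sound.
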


\begin{proof}
Recall that all $\lambda_\nu$ are positive, which is due to Lemma \ref{l.L0.selfadj}, and so
$\mu_\nu > 0$.
If $\lambda \ne 0$ then
$$
   (T_0 - \lambda \Id) u
 = (\Id  - \lambda\, \iota' \iota\, L^{-1}_0)\, T_0 u
 = - \lambda \Big( Q_1 - \frac{1}{\lambda} \Id \Big)\, T_0 u
$$
for all $u \in H_{\SL} (\cD)$, showing the lemma.
\end{proof}

If the spectrum of $T$ fails to be the whole complex plane, i.e., if the resolvent
$
   \mathcal{R} (\lambda; T)
 = (T - \lambda \Id)^{-1}
$
exists for some $\lambda = \lambda_0$, then it follows from the resolvent equation
   (since $\mathcal{R} (\lambda_0; T)$ is compact)
that $\mathcal{R} (\lambda; T)$ exists for all $\lambda \in \C$ except for a discrete
sequence of points $\{ \lambda_{\nu} \}$ which are the eigenvalues of $T$
   (see \cite[p.~17]{Keld71}.
In the general case, however, one cannot exclude the situation where the spectrum of $T$
is the whole complex plane.

\begin{thm}
\label{t.root.func.1}
Let
   $\delta L : H_{\SL} (\cD) \to H^-_{\SL} (\cD)$ be a compact operator and
   the operator $Q_{1}: H^-_{\SL} (\cD) \to H^-_{\SL} (\cD)$ be of finite order.
Then the spectrum of the closed operator
$
   T : H^-_{\SL} (\cD) \to H^-_{\SL} (\cD)
$
corresponding to
$
   L = L_0 + \delta L,
$
is different from $\C$ and the system of root functions of $T$ is complete in the spaces
   $H^-_{\SL} (\cD)$,
   $L^2 (\cD)$ and
   $H_{\SL} (\cD)$.
Moreover, for any $\varepsilon > 0$, all eigenvalues of $T$ (except for a finite number)
belong to the corner $|\arg \lambda| < \varepsilon$.
\end{thm}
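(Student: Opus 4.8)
The plan is to reduce the statement to the invertible situation already handled in Theorem~\ref{p.root.func.1}, with the whole difficulty concentrated in producing a single point of the resolvent set of $T$. First I would translate everything into $H_{\SL} (\cD)$. Since $T$ sends $u \in \cD_T = H_{\SL} (\cD)$ to $(L_0 + \delta L) u$ regarded in $H^-_{\SL} (\cD)$ via $\iota' \iota$, the operator $T - \lambda \Id$ is invertible if and only if the bounded operator $L_0 + \delta L - \lambda\, \iota' \iota : H_{\SL} (\cD) \to H^-_{\SL} (\cD)$ is an isomorphism. Factoring out the invertible $L_0$ and using (\ref{eq.L0}), this is equivalent to the invertibility on $H_{\SL} (\cD)$ of
\[
   G (\lambda) = \Id + K - \lambda Q_3,
\]
where $K = L_0^{-1} \delta L$ is compact and $Q_3 = L_0^{-1} \iota' \iota$ is the compact, positive, self-adjoint operator of Lemma~\ref{l.L0.selfadj}. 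As $\delta L - \lambda\, \iota' \iota$ is compact and $L_0$ invertible, $G (\lambda)$ is a holomorphic (affine) family of the form $\Id + \mathrm{compact}$, so the analytic Fredholm theorem guarantees that its values are invertible off a discrete set as soon as they are invertible at one point.

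The heart of the matter is thus to exhibit a regular point, and here I would exploit the interplay between $K$ and $Q_3$. If $\lambda$ is an eigenvalue with eigenvector $u$, $\| u \|_{\SL} = 1$, then pairing $G (\lambda) u = 0$ with $u$ in $H_{\SL} (\cD)$ and recalling that $(u, Q_3 u)_{\SL} = \| u \|_{L^2 (\cD)}^2 > 0$ (since $\iota$ is injective) yields the Rayleigh-type identity
\[
   \lambda = \frac{1 + (K u, u)_{\SL}}{(u, Q_3 u)_{\SL}}.
\]
The key observation is that, $Q_3$ being compact and positive and $K$ compact, one has the implication $(u, Q_3 u)_{\SL} \to 0 \Rightarrow (K u, u)_{\SL} \to 0$ uniformly over the unit sphere of $H_{\SL} (\cD)$: a small value of $(u, Q_3 u)_{\SL}$ forces $u$ to concentrate on the span of the eigenvectors of $Q_3$ with small eigenvalues, on which $K$ has small norm by compactness. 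Consequently, along any sequence of eigenvalues with $| \lambda | \to \infty$ the denominator tends to $0$ while the numerator tends to $1$, so $\arg \lambda \to 0$. In particular there is $R > 0$ with every eigenvalue of modulus exceeding $R$ lying in the corner $| \arg \lambda | < \varepsilon$, and choosing a real $\lambda_0 < - R$ yields a regular point. Hence the spectrum of $T$ is not all of $\C$, and by the resolvent argument recalled before the theorem it is discrete with the sole accumulation point at infinity; this makes the number of eigenvalues outside the corner finite and establishes the last assertion.

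With a regular point $\lambda_0$ in hand I would put $\widetilde{\delta L} = \delta L - \lambda_0\, \iota' \iota$, a compact operator for which $L_0 + \widetilde{\delta L}$ is invertible. Theorem~\ref{p.root.func.1} then applies verbatim, its only hypotheses being the finite order of $Q_1$ and the invertibility of $L_0 + \widetilde{\delta L}$, and it gives completeness in $H^-_{\SL} (\cD)$, $L^2 (\cD)$ and $H_{\SL} (\cD)$ of the root functions of the compact operator $\iota' \iota (L_0 + \widetilde{\delta L})^{-1}$. But this operator is exactly the resolvent $(T - \lambda_0 \Id)^{-1}$, whose root subspaces attached to an eigenvalue $\eta$ coincide with the root subspaces of $T$ attached to $\lambda_0 + \eta^{-1}$; completeness of the system of root functions of $T$ in the three spaces follows at once. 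The angular statement can also be recovered through Theorem~\ref{t.keldysh}: as $Q_1$ is positive it has no negative eigenvalues, so the first case of that theorem drives all but finitely many eigenvalues of $\iota' \iota (L_0 + \widetilde{\delta L})^{-1}$ into $| \arg \eta | < \varepsilon$, which under $\lambda = \lambda_0 + \eta^{-1}$ is again $| \arg \lambda | < \varepsilon$ for large $| \lambda |$.

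The main obstacle is the middle step, namely proving that the spectrum is not the whole plane. Everything else is bookkeeping or a direct citation, but the existence of a regular point is genuinely delicate because, as the text preceding the theorem stresses, for a general compact $\delta L$ the spectrum of $T$ may fill $\C$. The compactness-based clustering estimate $(u, Q_3 u)_{\SL} \to 0 \Rightarrow (K u, u)_{\SL} \to 0$ is what rescues the argument, and I would prove it carefully by splitting $u$ into its low- and high-frequency parts relative to the eigenbasis of $Q_3$, bounding the cross terms by $\| u \|_{L^2 (\cD)}$ and the diagonal high-frequency term by the tail norm of $K$.
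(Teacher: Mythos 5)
Your proposal is correct, and it reaches the theorem through the same overall skeleton as the paper (reduce via $T - \lambda \Id = L - \lambda\, \iota' \iota$, produce one resolvent point, then feed the invertible operator $L_0 + (\delta L - \lambda_0\, \iota' \iota)$ into Theorem~\ref{p.root.func.1}), but the central step — exhibiting a regular point — is done by a genuinely different argument. The paper proves that $\Id + \delta L\, (L_0 + k\, \iota' \iota)^{-1}$ is injective for some natural $k$ by contradiction: kernel elements $f_k$ of norm $1$ give a bounded sequence $u_k = (L_0 + k\, \iota' \iota)^{-1} f_k$ in $H_{\SL} (\cD)$, weak compactness plus compactness of $\delta L$ and of $\iota' \iota$ force $f = -\delta L u \neq 0$ while $k_j\, \iota' \iota\, u_{k_j} = L_0 u_{k_j} - f_{k_j}$ would have to converge weakly and simultaneously blow up in norm, which is absurd; injectivity plus the index-zero Fredholm property then yields invertibility, and the corner statement is left implicit, coming from Keldysh's theorem inside Theorem~\ref{p.root.func.1}. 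You instead derive the Rayleigh-type identity $\lambda = \bigl( 1 + (K u, u)_{\SL} \bigr) / (Q_3 u, u)_{\SL}$ for any unit eigenvector, and prove the clustering lemma that $(Q_3 u, u)_{\SL} \to 0$ forces $(K u, u)_{\SL} \to 0$ uniformly on the unit sphere; this lemma is true and your sketch of its proof is sound, since the spectral projections $P_\delta$ of $Q_3$ onto eigenvalues below $\delta$ tend to zero strongly, and compactness of $K$ upgrades this to $\| K P_\delta \| \to 0$, while the complementary component satisfies $\| (\Id - P_\delta) u \|_{\SL}^2 \leq (Q_3 u, u)_{\SL} / \delta$. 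What your route buys is a quantitative localization of \emph{all} eigenvalues of sufficiently large modulus inside any corner $|\arg \lambda| < \varepsilon$, which at once provides real negative regular points, discreteness (via the index-zero Fredholm property of $\Id + K - \lambda Q_3$ and the analytic Fredholm theorem), and a direct, self-contained proof of the final angular assertion — the one part of the statement the paper's own proof handles only tacitly. What the paper's softer contradiction argument buys is that it avoids any spectral decomposition of $Q_3$ and reappears almost verbatim in the later, harder perturbation results (Theorem~\ref{t.root.func.Schatten.Comp}), where your pointwise Rayleigh identity would be less convenient.
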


\begin{proof}
First we note that
\begin{equation}
\label{eq.T-mu}
   T - \lambda \Id
 = L - \lambda\, \iota' \iota
\end{equation}
on $H_{\SL} (\mathcal{D})$ for all $\lambda \in \C$.
Let us prove that there is $N \in \mathbb{N}$, such that
   $\lambda_0 = - N$
is a resolvent point of $T$.
For this purpose, using (\ref{eq.T-mu}) and Lemma \ref{l.spectrum.T0}, we get
\begin{equation}
\label{eq.spectrum.T}
   T + k \Id
 = (\Id + \delta L (L_0 + k \, \iota' \iota)^{-1}) (T_0 + k \Id)
\end{equation}
for all $k \in \mathbb{N}$.

We will show that the operator
   $\Id + \delta L (L_0 + k\, \iota' \iota)^{-1}$
is injective for some $k \in \mathbb{N}$.
Indeed, we argue by contradiction.
Suppose for any $k \in \mathbb{N}$ there is $f_k \in H^-_{\SL} (\cD)$, such that
   $\| f_k \|_{H_{\SL}^- (\cD)} = 1$
and
\begin{equation}
\label{eq.cee}
   (\Id + \delta L (L_0 + k\, \iota' \iota)^{-1}) f_k = 0.
\end{equation}
Given any $u \in H_{\SL} (\cD)$ and
          $k \in \mathbb{N}$,
an easy computation shows that
\begin{eqnarray*}
   \|(L_0 + k\, \iota' \iota) u \|_{H_{\SL}^- (\cD)}^2
 & = &
   \| u + k\, L_0^{-1} u \|_{\SL}^2
\\
 & = &
   \| u \|_{\SL}^2 + 2k\, \| u \|_{L^2 (\cD)}^2 + k^2\, \| L_0^{-1} u \|_{\SL}^2
\\
 & \geq &
   \| u \|_{\SL}^2.
\end{eqnarray*}
Hence, the sequence
   $u_k := (L_0 + k\, \iota' \iota)^{-1} f_k$
is bounded in $H_{\SL} (\cD)$.
Now the weak compactness principle for Hilbert spaces yields that there is a subsequence
   $\{ f_{k_j} \}$
with the property that both
   $\{ f_{k_j} \}$ and
   $\{ u_{k_j} \}$
converge weakly in the spaces $H^-_{\SL} (\cD)$ and
                              $H_{\SL} (\cD)$
to limits $f$ and $u$, respectively.
Since $\delta L$ is compact, it follows that the sequence $\{ \delta L u_{k_j} \}$
converges to $\delta L u$ in $H^-_{\SL} (\cD)$, and so $\{ f_{k_j} \}$ converges to $f$
because of (\ref{eq.cee}).
Obviously,
$$
   \| f \|_{H_{\SL}^- (\cD)} = 1.
$$
In particular, we conclude that the sequence
   $\{ \delta L (L_0 + k_j\, \iota' \iota)^{-1} f_{k_j} \}$
converges to $- f$ whence
\begin{equation}
\label{eq.w00}
   f = - \delta L\, u.
\end{equation}

Further, on passing to the weak limit in the equality
   $f_{k_j} = (L_0 + k_j\, \iota' \circ \iota) u_{k_j}$
we obtain
$$
   f = L_0 u - \lim_{k_j \to \infty} k_j\, \iota' \iota\,  u_{k_j},
$$
for the continuous operator
   $L_0 : H_{\SL} (\cD) \to H^-_{\SL} (\cD)$
maps weakly convergent sequences to weakly convergent sequences.
As the operator $\iota' \iota$ is compact, the sequence
   $\{ \iota' \iota\, u_{k_j} \}$
converges to $\iota' \iota\,  u$ in the space $H^-_{\SL} (\cD)$  and
   $\iota' \iota\, u \ne 0$
which is a consequence of (\ref{eq.w00}) and the injectivity of $\iota' \iota$.
This shows readily that the weak limit
$$
   \lim_{k_j \to \infty} k_j\, \iota' \iota\, u_{k_j}
 = L_0 u - f
$$
does not exist, a contradiction.

We have proved more, namely that the operator
   $\Id + \delta L (L_0 + k\, \iota' \iota)^{-1}$
is injective for all but a finitely many natural numbers $k$.
Since this is a Fredholm operator of index zero, it is continuously invertible.
Hence, (\ref{eq.spectrum.T}) and
       Lemma \ref{l.spectrum.T0}
imply that $(T - \lambda_0 \Id)^{-1}$ exists for some $\lambda_0 = - N$ with
$N \in \mathbb{N}$.

As $\lambda_0$ is a resolvent point of $T$,
$$
   (T - \lambda_0 \Id)^{-1}
 = (L - \lambda_0\, \iota' \iota )^{-1}
$$
on $H^-_{\SL} (\cD)$.
Since
   $L : H_{\SL} (\cD) \to H^-_{\SL} (\cD)$ is Fredholm and
   the inclusion $\iota$ compact,
the operator
   $L - \lambda_0\, \iota' \iota : H_{\SL} (\cD) \to H^-_{\SL} (\cD)$
is Fredholm.
So $(L - \lambda_0\, \iota' \iota)^{-1}$ maps $H^-_{\SL} (\cD)$ continuouosly to
                                              $H_{\SL} (\cD)$.
Similarly to (\ref{eq.perturbation}) we get
$$
   L_{0}^{-1} - (L - \lambda_0\, \iota' \iota)^{-1}
 = L_0^{-1}
   \left( (\delta L  - \lambda_0\, \iota' \iota) (L - \lambda_0\, \iota' \iota)^{-1}
   \right).
$$
Then, Theorem \ref{p.root.func.1} yields that the root functions $\{ u_\nu \}$ of the
operator
   $(L - \lambda_0\, \iota' \iota) ^{-1}$
are complete in the spaces $H_{\SL} (\cD)$,
                           $L^2 (\cD)$ and
                           $H^-_{\SL} (\cD)$.

From (\ref{eq.T-mu}) it follows that the systems of root functions related to the operators
   $(L - \lambda_0\, \iota' \iota)^{-1}$ and
   $T - \lambda_0\, \Id$
coincide.

Finally, as the operators $T - \lambda_0 \Id$ and
                          $T$
have the same root functions, we conclude that $\mathcal{L} (\{ u_\nu \})$ is dense in the
spaces
   $H_{\SL} (\cD)$,
   $L^2 (\cD)$ and
   $H^-_{\SL} (\cD)$.
\end{proof}

The equality $(T - \lambda \Id) u = 0$ for a function $u \in H_{\SL} (\cD)$ may be equivalently reformulated by saying that $u$ is a solution in a weak sense to the boundary value problem
\begin{equation}
\label{eq.SLep}
\left\{
\begin{array}{rclcl}
   A u
 & =
 & \lambda u
 & \mbox{in}
 & \cD,
\\
   B u
 & =
 & 0
 & \mbox{on}
 & \partial \cD,
\end{array}
\right.
\end{equation}
where the pair $(A,B)$ corresponds to the perturbation $L_0 + \delta L$.
For $n = 1$ such problems are known as Sturm-Liouville boundary problems for second order
ordinary differential equations (see for instance \cite[Ch.~XI, \S~4]{Hart64}).
Thus, we may still refer to (\ref{eq.SLep}) as the Sturm-Liouville problem in many dimensions.

To finish the preliminary discussion we indicate a broad class of compact operators of
finite orders acting in spaces of integrable functions.
For this purpose, given any positive non-integer $s$, we denote by $H^{s} (\cD)$ the
Sobolev-Slobodetskii space with norm
$$
   \| u \|_{H^s (\cD)}
 = \Big( \| u \|^2_{H^{[s]} (\cD)}
       + \int \!\!\! \int_{\cD \times \cD}
         \sum_{|\alpha | = [s]}
         \frac{|\partial^\alpha u (x) - \partial^\alpha u (y)|^2}{|x - y|^{n+2(s-[s])}}\,
         dx dy
   \Big)^{1/2},
$$
where $[s]$ is the integer part of $s$.
For $s < 0$, the space $H^{s} (\cD)$ is defined to be the dual space of $H^{-s} (\cD)$
with respect to the $L^2 (\cD)\,$-pairing, as discussed in Section \ref{s.4}.
More precisely, by $H^{s} (\cD)$ is meant the completion of $H^{-s} (\cD)$ with respect
to the norm
$$
   \| u \|_{H^{s} (\cD)}
 = \sup_{\substack{v \in H^{-s} (\cD) \\ v \ne 0}}
   \frac{|(v,u)_{L^2 (\cD)}|}{\| v \|_{H^{-s} (\cD)}}.
$$

The following result goes back at least as far as \cite{Agmo62}.

\begin{thm}
\label{t.order.Sobolev}
Let
   $s \in \mathbb R$ and
   $A : H^s (\cD) \to H^s (\cD)$ be a compact operator.
If there is $\delta s > 0$ such that $A$ maps $H^s (\cD)$ continuously to
                                              $H^{s + \delta s} (\cD)$,
then it belongs to Schatten class $\gS_{n/\delta s  + \varepsilon}$ for each $\varepsilon > 0$.
\end{thm}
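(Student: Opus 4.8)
The plan is to factorise $A$ through the compact Sobolev embedding and to reduce membership in a Schatten class to the known power decay of the $s$-numbers of that embedding. First I would regard $A$ as a bounded operator $B : H^s (\cD) \to H^{s+\delta s} (\cD)$, so that $A = \iota \circ B$ in $H^s (\cD)$, where $\iota : H^{s+\delta s} (\cD) \hookrightarrow H^s (\cD)$ denotes the natural embedding. Since the $s$-numbers of operators acting between Hilbert spaces are multiplicative with respect to composition with bounded operators, i.e. $s_\nu (\iota B) \leq s_\nu (\iota)\, \| B \|$ (see \cite[Ch.~2, \S~2]{GokhKrei69}), we obtain
$$
   s_\nu (A) \leq \| B \|\, s_\nu (\iota)
$$
for $\nu = 1, 2, \ldots$. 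Thus the whole matter reduces to an estimate for the $s$-numbers of the embedding $\iota$.

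The crucial step is then to establish the asymptotics
$$
   s_\nu (\iota) = O (\nu^{- \delta s / n})
   \quad \mbox{as } \nu \to \infty,
$$
which is the spectral (Weyl) asymptotics for Sobolev embeddings over a bounded domain and is precisely the content attributed to \cite{Agmo62}. To derive it I would reduce to the scale of $L^2 (\cD)$ by means of a positive self-adjoint elliptic operator $T$ of order $2$ on $L^2 (\cD)$ --- say $1 - \Delta$ together with a self-adjoint boundary condition --- whose fractional powers $T^{\sigma/2}$ furnish isomorphisms $H^\sigma (\cD) \to L^2 (\cD)$. Flanking $\iota$ by these order-reducing isomorphisms changes its $s$-numbers only by bounded factors and turns it into an operator on $L^2 (\cD)$ comparable to $T^{-\delta s/2}$. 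By Weyl's law the eigenvalues of $T$ satisfy $\mu_\nu \asymp \nu^{2/n}$, whence those of $T^{-\delta s/2}$ satisfy $\mu_\nu^{-\delta s/2} \asymp \nu^{-\delta s/n}$, giving the displayed bound. Weyl's law being available on Lipschitz domains, the argument applies in the present setting.

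Finally I would sum up. Combining the two displays gives
$$
   \sum_\nu |s_\nu (A)|^p
 \leq
   \| B \|^p \sum_\nu |s_\nu (\iota)|^p
 \leq
   c \sum_\nu \nu^{- p\, \delta s / n},
$$
and the right-hand series converges exactly when $p\, \delta s / n > 1$, that is, when $p > n / \delta s$. Therefore $A$ lies in $\gS_p$ for every $p > n / \delta s$, which is to say $A \in \gS_{n/\delta s + \varepsilon}$ for each $\varepsilon > 0$, as claimed. The main obstacle is the middle step: verifying the $\nu^{-\delta s/n}$ decay of the embedding's $s$-numbers uniformly across the full, possibly negative or fractional, Sobolev scale on a merely Lipschitz domain. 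This hinges on the correct interpolation behaviour of the spaces $H^s (\cD)$ together with Weyl's asymptotics for the associated elliptic operator, and it is precisely where the regularity of $\partial \cD$ really enters.
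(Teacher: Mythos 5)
Your overall architecture is sound, and in one respect it is even cleaner than the paper's: factoring $A = \iota \circ B$ and invoking $s_\nu (A) \leq \| B \|\, s_\nu (\iota)$ reduces everything to the singular values of the embedding $\iota : H^{s+\delta s} (\cD) \to H^{s} (\cD)$, and the bound $s_\nu (\iota) = O (\nu^{-\delta s/n})$ plus your final summation would give genuine Schatten membership of $A$ itself (the paper instead transfers root functions to an auxiliary operator and argues via eigenvalues, which is a weaker link). The genuine gap is in your middle step, exactly where you locate it, and the mechanism you propose for closing it fails. Fractional powers $T^{\sigma/2}$ of $T = 1 - \Delta$ with a self-adjoint boundary condition are isomorphisms of $\mathrm{Dom}\, (T^{\sigma/2})$ onto $L^2 (\cD)$, but $\mathrm{Dom}\, (T^{\sigma/2})$ is not $H^{\sigma} (\cD)$: already for the Dirichlet condition $\mathrm{Dom}\, (T^{1/2}) = H^1_0 (\cD) \neq H^1 (\cD)$; for $\sigma \geq 2$ the domain carries boundary conditions which elements of $H^\sigma (\cD)$ need not satisfy; on a merely Lipschitz domain even $\mathrm{Dom}\, (T) \neq H^2 (\cD)$, since elliptic regularity stops near order $3/2$ there; and for negative $\sigma$, where $H^{\sigma} (\cD)$ is defined in the paper as a dual with respect to the $L^2 (\cD)$ pairing, no such identification is available at all. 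So the flanking-by-isomorphisms argument collapses on precisely the range of $s$ (fractional, negative, Lipschitz boundary) that the theorem must cover.

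The step can be repaired, and this is what the paper actually does: instead of reducing orders inside $\cD$, extend outside it. Place $\overline{\cD}$ in a cube $Q$ and construct a bounded extension operator $e_{s,\cD} : H^{s} (\cD) \to H^{(s)}$ into the periodic Sobolev scale together with a restriction $r_{s,\cD}$ satisfying $r_{s,\cD}\, e_{s,\cD} = \Id$; Burenkov's extension theorem for Lipschitz domains handles integer $s \geq 0$, interpolation handles fractional $s \geq 0$, and duality handles $s < 0$. On the Fourier side the embedding $H^{(s+\delta s)} \to H^{(s)}$ is diagonal with singular values $(1 + |k|^2)^{-\delta s/2}$, $k \in \mathbb{Z}^n$, which after reordering decay like $\nu^{-\delta s/n}$ by lattice-point counting --- no Weyl law and no boundary value problem is needed. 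Writing $\iota = r_{s,\cD}\, \iota_{s+\delta s, s}\, e_{s+\delta s, \cD}$ and using multiplicativity of $s$-numbers then yields exactly your display $s_\nu (\iota) = O (\nu^{-\delta s / n})$, after which your summation finishes the proof. (The paper's own final step instead intertwines root functions, $K e_{s,\cD} = e_{s,\cD} A$, and concludes from eigenvalues; once the embedding estimate is in place, your $s$-number route is the tighter conclusion.)
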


\begin{proof}
For the case $s \in {\mathbb Z}_{\geq 0}$ see \cite{Agmo62}.
For the case $s \in \mathbb{R}$ and Sobolev spaces on a compact closed manifold $\cD$ see
Proposition 5.4.1 in \cite{Agra90}.
To the best of our knowledge, no proof has been appeared for the general case.
So we indicate crucial steps of the proof.

Let $Q$ be the cube
$$
   Q = \{ x \in \mathbb{R}^n :\,  |x_j| < \pi,\; j = 1, \ldots, n \}
$$
in $\mathbb{R}^n$.
Given a function $u \in L^2 (Q)$, we consider the Fourier series expansion
$$
   u (x)
 \sim
   \sum_{k \in \mathbb{Z}^{n}}
   c_{k} (u)\, e^{\imath \langle k, x \rangle}
$$
and introduce the norm
$$
   \| u \|_{H^{(s)}}^2
 = |a_0 (u)|^2
 + \sum_{k \in \mathbb{Z}^{n} \setminus \{ 0 \}} |k|^{2s}\, |c_k (u)|^2,
$$
where $s$ is a non-negative real number.
The subspace of functions for which this norm is finite is denoted by
   $H^{(s)}$.
Obviously, $H^{(s)}$ is a Hilbert space which, for non-negative integral $s$, can
be regarded as a closed subspace of the Sobolev space $H^{s} (Q)$.
We see readily that $H^s_{\mathrm{comp}} (Q) \mapsto H^{(s)}$.
For $s < 0$, we write $H^{(s)}$ for the dual of
                      $H^{(-s)}$
with respect to the sesquilinear pairing $(\cdot, \cdot)$ induced by the inner product
   $(\cdot, \cdot)_{H^{(0)}}$.
The norm in $H^{(s)}$ is still given by the same formula, as is easy to check.

Without loss of the generality we can assume  that the closure of $\cD$ is situated
in the cube $Q$.
For $s \geq 0$, we denote by $r_{s,\cD}$ the restriction operator from $H^s (Q)$ to
                                                                       $H^s (\cD)$.
By the above, $r_{s,\cD}$ acts to the elements of $H^{(s)}$, too, mapping these
continuously to $H^s (\cD)$.
As the boundary of $\cD$ is Lipschitz, for each $s \in {\mathbb Z}_{\geq 0}$ there
is a bounded extension operator
   $e_{s,\cD} : H^{s} (\cD) \to H^s_{\mathrm{comp}} (Q)$
(see for instance \cite[Ch.~6]{Bur98}).
We will think of $e_{s,\cD}$ as bounded linear operator from $H^s (\cD)$ to
                                                             $H^{(s)}$,
provided that $s \in {\mathbb Z}_{\geq 0}$.

Given any non-negative integer $s$, an interpolation procedure applies to the pair
   $(H^s (\cD), H^{s+1} (\cD))$,
thus giving a family of function spaces in $\cD$ of fractional smoothness
   $(1-\vartheta) s + \vartheta (s+1) = s + \vartheta$
with $0 < \vartheta < 1$.
The Banach spaces obtained in this way coincide with $H^{s + \vartheta} (\cD)$ up to
equivalent norms.
Thus, we can apply interpolation arguments to conclude that there is a bounded linear
extension operator
$
   e_{s,\cD} : H^{s} (\cD) \to H^{(s)}
$
for all real $s \geq 0$.
By construction,
\begin{equation}
\label{eq.ext}
   r_{s,\cD}\, e_{s,\cD} u = u
\end{equation}
holds for each $u \in H^s (\cD)$ with $s \geq 0$.

For $s < 0$ we introduce the mappings
$$
\begin{array}{rcrcl}
   r_{s,\cD}
 & :
 & H^{(s)}
 & \to
 & H^{s} (\cD),
\\
   e_{s,\cD}
 & :
 & H^{s} (\cD)
 & \to
 & H^{(s)},
\end{array}
$$
using the duality between the spaces $H^{(s)}$ and
                                     $H^{(-s)}$.
Namely,  if $s < 0$ we set
\begin{equation}
\label{eq.ext.r}
\begin{array}{rcl}
    (r_{s,\cD} u, v)
 & :=
 & (u, e_{-s,\cD} v),
\\
    (e_{s,\cD} u, v)
 & :=
 & (u, r_{-s,\cD} v)
\end{array}
\end{equation}
for all $u \in H^{(s)}$,
        $v \in H^{-s} (\cD)$
and for all $u \in H^{s} (\cD)$,
            $v \in H^{(-s)}$,
respectively.
As
$$
   |(u, e_{-s,\cD} v)|
 \leq
   \| u \|_{H^{(s)}}\, \| e_{-s,\cD} \|\, \| v \|_{H^{-s} (\cD)}
$$
for all $u \in H^{(s)}$ and
        $v \in H^{-s} (\cD)$,
which is a consequence of duality between the spaces $H^{(s)}$ and
                                                     $H^{(-s)}$,
the first identity of (\ref{eq.ext.r}) defines a bounded linear operator
$
   r_{s,\cD} :  H^{(s)} \to H^{s} (\cD)
$
indeed.
Similarly, by the duality between $H^{s} (\cD)$ and
                                  $H^{-s} (\cD)$
(cf. Lemma \ref{l.dual}), the second identity of (\ref{eq.ext.r}) defines
a bounded linear operator
$
   e_{s,\cD} :  H^{s} (\cD) \to H^{(s)}.
$

On applying equality (\ref{eq.ext}) we get
$
   (r_{s,\cD}\, e_{s,\cD} u, v)
 = (u, v)
$
for all $u \in H^{s} (\cD)$ and
        $v \in H^{-s} (\cD)$
with real $s < 0$.
In other words, the operators $r_{s,\cD}$ and
                              $r_{s,\cD}$
satisfy (\ref{eq.ext}) for all $s \in \R$, i.e.,
\begin{equation}
\label{eq.ext.a}
   r_{s,\cD}\, e_{s,\cD} = \Id_{H^s (\cD)}.
\end{equation}

For $t > s$ we denote by
$$
\begin{array}{rcrcl}
   \iota_{t,s,\cD}
 & :
 & H^{t} (\cD)
 & \to
 & H^{s} (\cD),
\\
   \iota_{t,s}
 & :
 & H^{(t)}
 & \to
 & H^{(s)}
\end{array}
$$
the natural inclusion mappings.
If $t < 0$, by this is meant
\begin{equation}
\label{eq.irm-}
\begin{array}{rcl}
   (\iota_{t,s,\cD} u, v)
 & =
 & (u, \iota_{-s,-t,\cD} v),
\\

   (\iota_{t,s} u, v)
 & =
 & (u, \iota_{-s,-t,\cD} v)
\end{array}
\end{equation}
for all
   $u \in H^t (\cD)$,
   $v \in H^{-s} (\cD)$
and
   $u \in H^{(t)}$,
   $v \in H^{(-s)}$,
respectively.
It is clear that
\begin{equation}
\label{eq.rie+}
\begin{array}{rcl}
   r_{s,\cD}\, \iota_{t,s}
 & =
 & \iota_{t,s,\cD}\, r_{t,\cD},
\\
   \iota_{t,s}\, e_{t,\cD}
 & =
 & e_{s,\cD}\, \iota_{t,s,\cD},
\\
   r_{s,\cD}\, \iota_{t,s}\, e_{t,\cD}
 & =
 & \iota_{t,s,\cD}
\end{array}
\end{equation}
provided $t \geq 0$.
If $t < 0$ then combining (\ref{eq.ext.r}),
                          (\ref{eq.irm-}) and
                          (\ref{eq.rie+})
yields
\begin{eqnarray*}
   (r_{s,\cD}\, \iota_{t,s} u, v)
 & =
 & (u, \iota_{-s,-t}\, e_{-s,\cD} v)
\\
 & =
 & (u, e_{-t,\cD}\, \iota_{-s,-t,\cD} v \rangle
\\
 & =
 & (\iota_{t,s,\cD}\, r_{t,\cD} u, v)
\end{eqnarray*}
for all
   $u \in H^{(t)}$ and
   $v \in H^{-s} (\cD)$,
and
\begin{eqnarray*}
   (\iota_{t,s}\, e_{t,\cD} u, v)
 & =
 & (u, r_{-t,\cD}\, \iota_{-s,-t} v)
\\
 & =
 & (u, \iota_{-s,-t}\, r_{-s,\cD} v)
\\
 & =
 & (e_{s,\cD}\, \iota_{t,s,\cD} u, v)
\end{eqnarray*}
for all
   $u \in H^{t} (\cD)$ and
   $v \in H^{(-s)}$,
whence
$
   r_{s,\cD}\, \iota_{t,s}\, e_{t,\cD} = \iota_{t,s,\cD}.
$
Therefore, equalities (\ref{eq.rie+}) are valid not only for real $t \geq 0$ but also
for all $t \in \R$.

\begin{lem}
\label{l.order.Sobolev}
Let
   $s \in \mathbb R$ and
   $K : H^{(s)} \to H^{(s)}$ be a compact operator.
If there is $\delta s > 0$ such that $K$ maps $H^{(s)}$ continuously to
                                              $H^{(s + \delta s)}$,
then $K$ is of Schatten class $\gS_{n / \delta s  + \varepsilon }$ for each $\varepsilon > 0$.
\end{lem}

\begin{proof}
Put
$$
   \mathit{\Lambda}_r u\, (x)
 = \sum_{k \in \mathbb{Z}^{n}}
   (1 + |k|^{2})^{r/2}\,
   c_{k} (u)\, e^{\imath \langle k, x \rangle}.
$$
Obvioulsy, $\mathit{\Lambda}_r$ maps $H^{(s)}$ continuously to
                                     $H^{(s-r)}$
for all $s \in \R$.
F\"{u}r each fixed $s$, the operator
   $\mathit{\Lambda}_{-r}\, \iota_{s+r,s}$
is selfadjoint and compact in $H^{(s+r)}$.
Its eigenvalues are $(1 + |k|^{2})^{- r/2}$ and the corresponding eigenfunctions are
                    $e^{\imath \langle k, x \rangle}$.
The series
$$
   \sum_{k \in \mathbb{Z}^{n}} (1+|k|^2)^{- p r / 2}
$$
(counting the eigenvalues with their multiplicities) converges for all $p > n/r$, and so
$
   \mathit{\Lambda}_{-r}\, \iota_{s+r,s}
$
is of Schatten class $\gS_{n/r + \varepsilon}$ for any $\varepsilon > 0$.

Obviously,
   $\mathit{\Lambda}_{-r} \mathit{\Lambda}_{r} = \Id$
holds for all $r > 0$.
By assumption, the operator $K$ factors through the embedding
   $\iota_{s + \delta s,s} : H^{(s + \delta s)} \to H^{(s)}$,
i.e., there is a bounded linear operator
   $K_0 : H^{(s)} \to H^{(s + \delta s)}$
such that
   $K = \iota_{s + \delta s,s} K_0$.
Then
\begin{eqnarray*}
   K
 & = &
   \mathit{\Lambda}_{- \delta s}\, \mathit{\Lambda}_{\delta s} K
\\
 & = &
   \mathit{\Lambda}_{-\delta s}\, \mathit{\Lambda}_{\delta s}\, \iota_{s+\delta s,s}
   K_0
\\
 & = &
   \mathit{\Lambda}_{-\delta s}\, \iota_{s + \delta s,s}\, \mathit{\Lambda}_{\delta s}
   K_0.
\end{eqnarray*}
Since the operator
   $\mathit{\Lambda}_{\delta s}\, K_0 : H^{(s)} \to H^{(s)}$
is bounded, Lemma \ref{l.2} implies that $K$ belongs to the Schatten class
   $\gS_{n/\delta s + \varepsilon}$
for any $\varepsilon > 0$.
\end{proof}

We are now in a position to complete the proof of Theorem \ref{t.order.Sobolev}.
Suppose that
   $A_0 : H^{s} (\cD) \to  H^{s + \delta s} (\cD)$
is a bounded linear operator, such that $A = \iota_{s + \delta s,s} A_0$.
Set
$$
   K_0 = e_{s + \delta s,\cD}\, A_0\, r_{s,\cD},
$$
then $K_0$ maps $H^{(s)}$ continuously to $H^{(s + \delta s)}$.
By Lemma \ref{l.order.Sobolev}, the composition
   $K = \iota_{s + \delta s,s} K_0$
is of Schatten class $\gS_{n/\delta s + \varepsilon}$ for any $\varepsilon > 0$.
Besides, we get
\begin{equation}
\label{eq.AB}
   r_{s + \delta s,\cD}\, K_0 =  A_0\, r_{s,\cD}
\end{equation}
because of (\ref{eq.ext.a}).

Let
   $\lambda$ be a non-zero eigenvalue of $A$
and
   $u \in H^s (\cD)$ a root function corresponding to $\lambda$,
i.e.,
   $(A - \lambda \Id)^m u = 0$
for some natural number $m$.
Then, using (\ref{eq.rie+}) and
            (\ref{eq.AB}),
we conclude that
\begin{eqnarray*}
   (K - \lambda \Id)^m\, e_{s,\cD} u
 & = &
   e_{s,\cD}\, (A - \lambda \Id)^m  u
\\
 & = &
   0,
\end{eqnarray*}
that is each non-zero eigenvalue of $A$ is actually an eigenvalue for $K$ of the same
multiplicity.
Therefore, $A$ belongs to the Schatten class
   $\gS_{n/\delta s + \varepsilon}$
for any $\varepsilon > 0$, too.
\end{proof}

\begin{cor}
\label{c.order.SL}
If for some $s > 0$ there is a continuous embedding
\begin{equation}
\label{eq.emb.s}
   \iota_s : H_{\SL} (\cD) \hookrightarrow H^s (\cD)
\end{equation}
then any compact operator
   $R : H^-_{\SL} (\cD)  \to H^-_{\SL} (\cD)$
which maps $H^-_{\SL} (\cD)$ continuously to
           $H_{\SL} (\cD)$
is of Schatten class $\gS_{n/2s + \varepsilon}$ for any $\varepsilon > 0$.
In particular, its order is finite.
\end{cor}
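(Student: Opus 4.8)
The plan is to factor $R$ through the Sobolev scale and reduce everything to the quantitative Schatten estimate for the compact inclusion $H^s(\cD)\hookrightarrow H^{-s}(\cD)$, after which the ideal property of Schatten classes finishes the proof. First I would dualize the hypothesis: since the embedding $\iota_s$ of (\ref{eq.emb.s}) is continuous, the $L^2(\cD)$-pairings of \lemref{l.dual} and \lemref{l.dual.emb} identify its transpose as a bounded operator $\iota_s':H^{-s}(\cD)\to H^-_{\SL}(\cD)$. Writing $R_0:H^-_{\SL}(\cD)\to H_{\SL}(\cD)$ for $R$ regarded as a map into $H_{\SL}(\cD)$, the hypothesis gives $R=\iota'\iota\,R_0$, where $\iota'\iota:H_{\SL}(\cD)\hookrightarrow H^-_{\SL}(\cD)$ is the natural embedding built from (\ref{eq.incl}) and \lemref{l.dual.emb}. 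The key algebraic observation is that this $L^2$-embedding factors through the Sobolev scale, namely $\iota'\iota=\iota_s'\,\iota_{s,-s,\cD}\,\iota_s$, as one verifies by pairing against $H^1(\cD,S)$ in $L^2(\cD)$. Hence
\[
   R=\iota_s'\,\iota_{s,-s,\cD}\,(\iota_s R_0),
\]
a composition of the bounded operator $\iota_s R_0:H^-_{\SL}(\cD)\to H^s(\cD)$, the inclusion $\iota_{s,-s,\cD}:H^s(\cD)\to H^{-s}(\cD)$, and the bounded operator $\iota_s':H^{-s}(\cD)\to H^-_{\SL}(\cD)$.

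Everything then hinges on showing that the middle factor $\iota_{s,-s,\cD}$ already belongs to $\gS_{n/2s+\varepsilon}$ for every $\varepsilon>0$. I would extract this from the cube machinery developed above for \thmref{t.order.Sobolev}. By the third identity in (\ref{eq.rie+}), which is valid for all real indices, one has $\iota_{s,-s,\cD}=r_{-s,\cD}\,\iota_{s,-s}\,e_{s,\cD}$ with $r_{-s,\cD}$ and $e_{s,\cD}$ bounded, so it suffices to treat the periodic inclusion $\iota_{s,-s}:H^{(s)}\to H^{(-s)}$. Using $\mathit{\Lambda}_{2s}\mathit{\Lambda}_{-2s}=\Id$ one writes $\iota_{s,-s}=\mathit{\Lambda}_{2s}\,(\mathit{\Lambda}_{-2s}\iota_{s,-s})$, where $\mathit{\Lambda}_{-2s}\iota_{s,-s}$ is precisely the self-adjoint model operator of the proof of \lemref{l.order.Sobolev} on $H^{(s)}$, whose eigenvalues $(1+|k|^2)^{-s}$ are its $s$-numbers and satisfy $\sum_{k\in\mathbb{Z}^n}(1+|k|^2)^{-ps}<\infty$ precisely for $p>n/2s$. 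Thus the model operator lies in $\gS_{n/2s+\varepsilon}$ by Definition~\ref{d.Schatten}, and two applications of \lemref{l.2} give first $\iota_{s,-s}\in\gS_{n/2s+\varepsilon}$ and then $\iota_{s,-s,\cD}\in\gS_{n/2s+\varepsilon}$.

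With the middle factor in $\gS_{n/2s+\varepsilon}$, the ideal property of Schatten classes, i.e.\ \lemref{l.2} in its evident form for operators acting between the distinct Hilbert spaces $H^-_{\SL}(\cD)$, $H^s(\cD)$ and $H^{-s}(\cD)$, yields $R\in\gS_{n/2s+\varepsilon}$ for every $\varepsilon>0$. In particular the order of $R$ is at most $n/2s$, and hence finite.

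The main obstacle is precisely this quantitative embedding estimate, above all getting the sharp exponent. The value $n/2s$ is dictated by the smoothing order $2s$ of $H^s(\cD)\hookrightarrow H^{-s}(\cD)$ and must be read off from the explicit Fourier decay on $Q$ after the restriction/extension transfer (\ref{eq.rie+})--(\ref{eq.ext.a}); one should resist the temptation to argue instead through the similar operator $\iota_{s,-s,\cD}\,\iota_s R_0\,\iota_s'$ on $H^{-s}(\cD)$ (to which \thmref{t.order.Sobolev} applies directly), since recovering $R$ from this operator controls it only through $R^2=\iota_s'\,(\iota_{s,-s,\cD}\,\iota_s R_0\,\iota_s')\,(\iota_{s,-s,\cD}\,\iota_s R_0)$ and thus yields at best the weaker exponent $n/s$. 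A secondary, purely bookkeeping point is that \lemref{l.2} is stated for a single Hilbert space, whereas the factors here act between three different spaces; the extension of the ideal property to that setting is standard and should be invoked explicitly.
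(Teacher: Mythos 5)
Your proposal is correct, and although it starts from the same factorization through the Sobolev scale, it closes the argument by a genuinely different mechanism than the paper. The paper also writes $R = \iota_s'\, (i'i)\, \iota_s R_0$ with $i : H^s (\cD) \hookrightarrow L^2 (\cD)$, but it then forms the conjugated operator $K = i'i\, \iota_s R_0\, \iota_s'$ on $H^{-s} (\cD)$, applies Theorem \ref{t.order.Sobolev} to $K$ (smoothing gain $2s$), and transfers the conclusion back to $R$ by showing, via the binomial expansion and the injectivity of $\iota_s'$, that every root function of $R$ is of the form $\iota_s' u_0$ with $u_0$ a root function of $K$ for the same eigenvalue and multiplicity. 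You instead prove that the middle factor, the embedding $H^s (\cD) \to H^{-s} (\cD)$, is itself of class $\gS_{n/2s + \varepsilon}$ by re-entering the proof of Lemma \ref{l.order.Sobolev} (transfer to the cube via $e_{s,\cD}$ and $r_{-s,\cD}$ using the extended identities (\ref{eq.rie+}), the diagonal model operator with eigenvalues $(1+|k|^2)^{-s}$, summable to the power $p$ exactly when $p > n/2s$), and then conclude with the two-sided ideal property. What your route buys is honest singular-number control: the ideal property gives membership of $R$ in $\gS_{n/2s+\varepsilon}$ in the sense of Definition \ref{d.Schatten}, whereas the paper's final inference passes from coincidence of eigenvalues of $R$ and $K$ to Schatten membership, a step that is immediate only when eigenvalues and $s$-numbers agree (e.g.\ for self-adjoint operators such as $Q_1$ in Corollary \ref{c.order.SL0}); since Lemma \ref{l.1} in Theorem \ref{p.root.Schatten} is applied to the non-self-adjoint operator $R = T^{-1}$ and genuinely concerns $s$-numbers, your version is the more robust one for that application. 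Your closing remark is also accurate: arguing honestly through the paper's auxiliary operator controls only $R^2$ and would degrade the exponent to $n/s$. The two points you flag yourself — the ideal property for operators acting between three different Hilbert spaces, and the validity of the third identity of (\ref{eq.rie+}) for the index pair $(s,-s)$ — are indeed the only things to be invoked explicitly, and both are covered by the paper's constructions.
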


\begin{proof}
We first observe that the operator $\iota_s$ induces via composition a bounded 
inclusion operator
   $\iota_s' : H^{-s} (\cD) \to H^-_{\SL} (\cD)$.
This latter is actually the transpose of $\iota_s$.   

Let $R_0$ be the operator $R$ which is thought of as a bounded map of $H^-_{\SL} (\cD)$ to
                                                                      $H_{\SL} (\cD)$.
Then the operator $\iota_s R_0 \iota_s'$ maps $H^{-s} (\cD)$ continuously to
                                              $H^s (\cD)$.

Write
   $i : H^s (\cD) \hookrightarrow L^2 (\cD)$
for the natural inclusion and
   $i' : L^2 (\cD) \hookrightarrow H^{-s} (\cD)$
for the corresponding map induced by duality.
It follows from Theorem \ref{t.order.Sobolev} that the operator
   $i' i\, \iota_s R_0 \iota_s' : H^{-s} (\cD) \to H^{-s} (\cD)$
is of Schatten class
   $\gS_{n/2s + \varepsilon}$
for any $\varepsilon  > 0$.
By assumption, the embedding
   $\iota : H_{\SL} (\cD) \hookrightarrow L^2 (\cD)$
factors through $i$, i.e., $\iota = i  \circ \iota_s$ whence
$
   \iota' = \iota_s'  i'.
$
Hence
$
   R = \iota' \iota\, R_0
     = \iota_s'\, i' \, i \, \iota_s\, R_0.
$

Let now
   $\lambda$ be a non-zero eigenvalue of $R$
and
   $u$ be a root function of $R$ corresponding to $\lambda$,
i.e.,
   $(R - \lambda \Id)^m u = 0$
for a natural number $m$.
Then it follows from the binomial formula that $u$ belongs to the image of
the operator $\iota_s'$, i.e.,
   $u = \iota_s' u_0$
for some $u_0 \in H^{-s} (\cD)$.
Hence
\begin{eqnarray*}
   (R - \lambda \Id)^m u
 & = &
   (R - \lambda \Id)^m \iota_s' u_0
\\
 & = &
   \iota_s'\, (i'\, i\, \iota_s\, R_0 \iota_s' - \lambda \Id)^m  u_0
\\
 & = &
   0.
\end{eqnarray*}

As the operator $\iota'$ is injective, each  eigenvalue of the operator $R$ is in
fact an eigenvalue of  $i'\, i\, \iota_s\, R_0 \iota_s'$ of the same multiplicity.
Therefore $R$ lies in $\gS_{n/2s + \varepsilon}$ for any $\varepsilon > 0$, too.
\end{proof}

\begin{cor}
\label{c.order.SL0}
Suppose there is a continuous embedding (\ref{eq.emb.s}) with some $s > 0$.
Then the operators $Q_1$,
                   $Q_2$ and
                   $Q_3$
are of Schatten class $\gS_{n/2s + \varepsilon}$ for any $\varepsilon  > 0$
   (and so they are of finite order).
\end{cor}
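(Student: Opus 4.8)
The plan is to reduce everything to Corollary~\ref{c.order.SL} by treating one convenient representative, say $Q_{1}$, and then to transfer the Schatten property to $Q_{2}$ and $Q_{3}$ through the observation that the three operators share the same $s\,$-numbers. The crux is the remark that $Q_{1} = \iota' \iota\, L_0^{-1}$ is precisely of the shape to which Corollary~\ref{c.order.SL} applies.

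First I would check that the standing hypotheses are in force. The continuous embedding (\ref{eq.emb.s}) immediately yields estimate (\ref{eq.incl.est}), since the $H^s (\cD)\,$-norm dominates the $L^2 (\cD)\,$-norm for $s \geq 0$. Combining (\ref{eq.emb.s}) with the well-known compactness of the Rellich embedding $H^s (\cD) \hookrightarrow L^2 (\cD)$ for $s > 0$ on the Lipschitz domain $\cD$, the inclusion (\ref{eq.incl}) factors as $\iota = i \circ \iota_s$ through a compact operator, hence is itself compact. Therefore Lemma~\ref{l.L0.selfadj} applies and $Q_1$, $Q_2$, $Q_3$ are compact, positive and self-adjoint, sharing one system of strictly positive eigenvalues counted with multiplicity.

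Next I would apply Corollary~\ref{c.order.SL} with $R = Q_1$. Setting $R_0 := L_0^{-1}$, which is a bounded map $H^-_{\SL} (\cD) \to H_{\SL} (\cD)$ by Lemma~\ref{l.solv.SL1}, one has $Q_1 = \iota' \iota\, R_0$. The point deserving a moment's care is that, although the range of $Q_1$ a priori sits in $H^-_{\SL} (\cD)$, the operator $\iota' \iota$ is exactly the canonical inclusion $H_{\SL} (\cD) \hookrightarrow H^-_{\SL} (\cD)$, so the value $\iota' \iota\, L_0^{-1} f$ is just $L_0^{-1} f \in H_{\SL} (\cD)$ viewed in $H^-_{\SL} (\cD)$; thus $Q_1$ maps $H^-_{\SL} (\cD)$ continuously into $H_{\SL} (\cD)$, the associated map being $L_0^{-1}$. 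Corollary~\ref{c.order.SL} then gives $Q_1 \in \gS_{n/2s + \varepsilon}$ for every $\varepsilon > 0$.

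Finally I would transfer this to $Q_2$ and $Q_3$. Recall from \secref{s.2} that, for a positive self-adjoint compact operator, the $s\,$-numbers coincide with the eigenvalues. Since Lemma~\ref{l.L0.selfadj} guarantees identical eigenvalue systems for $Q_1$, $Q_2$, $Q_3$, these operators have identical $s\,$-numbers, so the series $\sum_\nu |s_\nu (Q_j)|^p$ converges for one index $j$ precisely when it converges for all. Hence $Q_2, Q_3 \in \gS_{n/2s + \varepsilon}$ as well, and all three are of finite order. I do not expect any genuine analytic obstacle beyond the bookkeeping in the previous paragraph, the entire substance having been concentrated earlier in Corollary~\ref{c.order.SL} and Lemma~\ref{l.L0.selfadj}.
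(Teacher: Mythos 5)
Your proof is correct and takes essentially the same route as the paper's: apply Corollary~\ref{c.order.SL} to $Q_1 = \iota' \iota\, L_0^{-1}$ (factoring through $R_0 = L_0^{-1} : H^-_{\SL} (\cD) \to H_{\SL} (\cD)$), then transfer the Schatten property to $Q_2$ and $Q_3$ via the identical eigenvalue systems guaranteed by Lemma~\ref{l.L0.selfadj}. Your explicit verification of the standing hypotheses and the remark that $s\,$-numbers coincide with eigenvalues for positive self-adjoint compact operators merely spell out what the paper leaves implicit.
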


\begin{proof}
Since $Q_1 = \iota' \iota\, L^{-1}_{0}$ and
      $L^{-1}_{0}$ maps $H_{SL} (\cD)$ continuously to $H^-_{SL} (\cD)$,
the operator $Q_1$ is of Schatten class
   $\gS_{n/2s + \varepsilon}$
for any $\varepsilon  > 0$, which is due to Corollary \ref{c.order.SL}.
On the other hand, Lemma \ref{l.L0.selfadj} shows that the operators $Q_1$,
                                                                     $Q_2$ and
                                                                     $Q_3$
have the same eigenvalues.
Hence, all these operators belong to the Schatten class
   $\gS_{n/2s + \varepsilon}$
for any $\varepsilon  > 0$, as desired.
\end{proof}

Now we want to study the completeness of root functions of ``small'' perturbations of compact self-adjoint operators instead of the weak ones.
To this end we apply the so-called method of rays of minimal growth of resolvent which leads to more general results than Theorem \ref{t.keldysh}.
This idea seems to go back at least as far as \cite{Agmo62}.

\section{Rays of minimal growth}
\label{s.3}

We first describe briefly the method of minimal growth rays following
   \cite{DunfSchw63} and
   Theorem 6.1 of \cite[p.~302]{GokhKrei69}.

Let $L : H_{\SL} (\cD) \to H^-_{\SL} (\cD)$ be the bounded linear operator constructed in Section~\ref{s.4}.
We still assume that estimates (\ref{eq.incl}) and
                               (\ref{eq.positive.part1})
hold and that the operator $L$ is Fredholm.
In the sequel we confine ourselves to those Sturm-Liouville problems for which the spectrum of the corresponding unbounded closed operator $T : H^-_{\SL} (\cD) \to H^-_{\SL} (\cD) $ is discrete,
   cf. \cite{Agmo62}.
We denote by $\mathcal{R} (\lambda; T) $ the resolvent of the operator $T$.

\begin{defn}
\label{d.romg}
A ray $\arg \lambda = \vartheta$ in the complex plane $\C$ is called a ray of minimal growth of the resolvent
$
   \mathcal{R} (\lambda; T) : H^-_{\SL} (\cD) \to H^-_{\SL} (\cD)
$
if
   the resolvent exists for all $\lambda$ of sufficiently large modulus on
   this ray,
and if, moreover,
   for all such $\lambda$ an estimate
\begin{equation}
\label{eq.13}
   \| \mathcal{R} (\lambda; T) \|_{\mathcal{L} (H^-_{\SL} (\cD))}
 \leq
   c\, |\lambda|^{-1} %
\end{equation}
holds with a constant $C > 0$.
\end{defn}

\begin{thm}
\label{p.root.Schatten}
Let the space $H_{\SL} (\cD)$ be continuously embedded into $H^s (\cD)$ for some $s > 0$.
Suppose there are rays of minimal growth of the resolvent
   $\arg \lambda = \vartheta_j$,
where $j = 1, \ldots, J$, in the complex plane, such that the angles between any two
neigh\-bouring rays are less than
   $2 \pi s / n$.
Then the spectrum of the operator $T$ is discrete and the root functions form a complete
system in the spaces $H^-_{\SL} (\cD)$,
           $L^2 (\cD)$ and
           $H_{\SL} (\cD)$.
\end{thm}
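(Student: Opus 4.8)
The plan is to reduce the statement to the classical completeness theorem for \emph{compact} operators possessing rays of minimal growth, namely Theorem~6.1 of \cite[p.~302]{GokhKrei69}, with the required order supplied by Corollary~\ref{c.order.SL}. First I would fix one ray of minimal growth, say $\arg \lambda = \vartheta_1$. By Definition~\ref{d.romg} the resolvent $\mathcal{R} (\lambda; T)$ exists at some point $\lambda = \lambda_0$, so $R := \mathcal{R} (\lambda_0; T) = (T - \lambda_0 \Id)^{-1}$ is a bounded operator from $H^-_{\SL} (\cD)$ into $H_{\SL} (\cD)$. Post-composing with the compact inclusion $\iota' \iota$ shows that $R$ is compact in $H^-_{\SL} (\cD)$, whence, by the resolvent equation, $\mathcal{R} (\lambda; T)$ is meromorphic in $\C$ with poles only at a discrete sequence $\{ \lambda_\nu \}$ and the spectrum of $T$ is discrete, cf. the discussion preceding Theorem~\ref{t.root.func.1}. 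Since $R$ maps $H^-_{\SL} (\cD)$ continuously to $H_{\SL} (\cD)$ and the latter embeds continuously into $H^s (\cD)$, Corollary~\ref{c.order.SL} gives $R \in \gS_{n/2s + \varepsilon}$ for every $\varepsilon > 0$; in particular the order of $R$ is at most $n/2s$.

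Next I would transfer the rays of minimal growth from $T$ to $R$. The eigenvalues of $R$ are exactly $\mu_\nu = (\lambda_\nu - \lambda_0)^{-1}$, and $u$ is a root function of $R$ for $\mu_\nu$ if and only if it is a root function of $T$ for $\lambda_\nu$, so the two operators share the same system of root functions. Writing $\mu = (\lambda - \lambda_0)^{-1}$ and using the identity $\mathcal{R} (\lambda; T)(T - \lambda_0 \Id) = \Id + (\lambda - \lambda_0)\, \mathcal{R} (\lambda; T)$ one computes
\begin{equation}
\label{eq.res.transfer}
   (R - \mu \Id)^{-1}
 = - (\lambda - \lambda_0)\, \Id - (\lambda - \lambda_0)^2\, \mathcal{R} (\lambda; T).
\end{equation}
As $\lambda \to \infty$ along $\arg \lambda = \vartheta_j$, we have $\mu \to 0$ along $\arg \mu = - \vartheta_j$ with $|\lambda - \lambda_0| = |\mu|^{-1}$ and $|\lambda - \lambda_0| / |\lambda| \to 1$; hence estimate (\ref{eq.13}) turns (\ref{eq.res.transfer}) into $\| (R - \mu \Id)^{-1} \| \leq c\, |\mu|^{-1}$ for small $\mu$ on the ray $\arg \mu = - \vartheta_j$. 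Thus the rays $\arg \mu = - \vartheta_j$, $j = 1, \ldots, J$, are rays of minimal growth of the resolvent of the compact operator $R$, and since negation preserves angles, the angles between neighbouring rays remain strictly less than $2 \pi s / n$.

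The genuinely hard part is the completeness theorem itself, which rests on a Phragm\'en--Lindel\"of argument applied to the entire functions $\lambda \mapsto (\mathcal{R} (\lambda; T) g, h)$ associated with a putative element $h$ orthogonal to all root functions; I would borrow this from \cite[p.~302]{GokhKrei69} and \cite{DunfSchw63} rather than reprove it. To apply it I choose $\varepsilon > 0$ so small that $\pi / (n/2s + \varepsilon)$ still exceeds the largest angle between neighbouring rays, which is possible precisely because that angle is strictly less than $2 \pi s / n = \pi / (n/2s)$. Since $R \in \gS_{n/2s + \varepsilon}$ and its rays of minimal growth divide $\C$ into sectors of opening less than $\pi / (n/2s + \varepsilon)$, the cited theorem yields that the system of root functions of $R$, hence of $T$, is complete in $H^-_{\SL} (\cD)$.

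It remains to pass to the two other spaces, which proceeds exactly as in the proof of Theorem~\ref{p.root.func.1}. Every root function lies in the range of $R$, hence in $H_{\SL} (\cD)$, and $R$ carries the linear span $\mathcal{L} (\{ u_\nu \})$ into itself. Using the continuous dense inclusions $H_{\SL} (\cD) \hookrightarrow L^2 (\cD) \hookrightarrow H^-_{\SL} (\cD)$ together with the boundedness of $R : H^-_{\SL} (\cD) \to H_{\SL} (\cD)$, one transfers the density of $\mathcal{L} (\{ u_\nu \})$ from $H^-_{\SL} (\cD)$ first to $H_{\SL} (\cD)$, and then, since $C^\infty_{\mathrm{comp}} (\cD)$ is dense in $L^2 (\cD)$, to $L^2 (\cD)$. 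I expect the one delicate technical point to be the ray transfer (\ref{eq.res.transfer}), where the unboundedness of $T - \lambda_0 \Id$ must be absorbed into $\mathcal{R} (\lambda; T)$ so that the minimal-growth estimate survives uniformly on the rays; all remaining steps are routine given Corollary~\ref{c.order.SL} and the cited completeness theorem.
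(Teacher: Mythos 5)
Your overall strategy is sound and is in essence the paper's own: the paper likewise passes to the compact operator $R=T^{-1}$ (after normalizing so that $0\notin\sigma(T)$), gets $R\in\gS_{n/2s+\varepsilon}$ from Corollary~\ref{c.order.SL}, and exploits the resolvent identity (\ref{eq.15}) linking $\mathcal{R}(1/\lambda;R)$ to $\mathcal{R}(\lambda;T)$. The only structural difference is that where you cite Theorem 6.1 of Gohberg--Krein as a black box, the paper reproves its core following Agmon: for $g$ orthogonal to all root functions it forms the entire function $f(\lambda)=\bigl(\mathcal{R}(1/\lambda;R)u,g\bigr)_{H^-_{\SL}(\cD)}$, shows $|f(\lambda)|=O(|\lambda|)$ on the rays via (\ref{eq.13}) and $|f(\lambda)|\leq\exp\bigl(|\lambda|^{n/2s+\varepsilon}\bigr)$ on a sequence of expanding arcs via Lemma~\ref{l.1}, and concludes by Phragm\'en--Lindel\"of that $f$ is affine, whence $(Ru,g)=0$ for all $u$ and $g=0$ by density of the range of $R$. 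Your reduction buys brevity; the paper's explicit argument buys self-containedness and shows exactly where the angle bound $2\pi s/n$ and the Schatten exponent enter. Your passage to $L^2(\cD)$ and $H_{\SL}(\cD)$ at the end coincides with the paper's (both repeat the argument of Theorem~\ref{p.root.func.1}).

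There is, however, one step that fails as written: the ray transfer. With $\lambda_0\neq 0$, the M\"obius map $\lambda\mapsto\mu=(\lambda-\lambda_0)^{-1}$ does \emph{not} send the ray $\arg\lambda=\vartheta_j$ onto the ray $\arg\mu=-\vartheta_j$; its image is a circular arc through the origin that is only asymptotic to that ray. Hence your estimate $\|(R-\mu\Id)^{-1}\|\leq c\,|\mu|^{-1}$ is established only on that arc, and the hypothesis of the cited theorem (growth estimates on genuine rays through the origin, bounding exact sectors) is not yet verified; note also that the delicacy is this geometric one, not the ``absorption of the unboundedness of $T-\lambda_0\Id$'', which your identity already handles. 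The missing ingredient is elementary but must be stated: a minimal-growth estimate propagates to points at bounded distance from the ray, because if $\|\mathcal{R}(z;T)\|\leq c\,|z|^{-1}$ on the ray and $|w-z|\leq|\lambda_0|$ with $|z|\geq 2c\,|\lambda_0|$, then the Neumann series gives $w\in\rho(T)$ and $\|\mathcal{R}(w;T)\|\leq 2c\,|z|^{-1}\leq c'\,|w|^{-1}$. With this observation you can either estimate $(R-\mu\Id)^{-1}$ directly for $\mu$ on the honest ray $\arg\mu=-\vartheta_j$ (such $\mu$ correspond to $\lambda=\lambda_0+\mu^{-1}$, which lies within distance $|\lambda_0|$ of the ray $\arg z=\vartheta_j$), or, as the paper does, first replace $T$ by $T-\lambda_0\Id$ --- which has the same root functions --- so that $0$ is a resolvent point and the inversion $\mu=1/\lambda$ maps rays to rays exactly; the paper's ``without loss of generality'' silently rests on the same perturbation observation. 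Finally, when invoking the Gohberg--Krein theorem, record that its completeness conclusion is in the closure of the range of $R$, which is all of $H^-_{\SL}(\cD)$ here since the range equals the dense subspace $H_{\SL}(\cD)$.
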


\begin{proof}
The proof actually follows by the same method as that in Theorem 3.2 of \cite{Agmo62}.

Since the spectrum of the operator $T$ is different from the whole complex plane it is
actually discrete.
It remains to show that if $g \in H^-_{\SL} (\cD)$ is orthogonal to all eigen- and associated
functions of the operator $T$ then $g$ is identically zero.
By the Hahn-Banach theorem, this implies that the root functions of $T$ are complete in
$H^-_{\SL} (\cD)$.

Since the operators $T$ and
                    $T - \lambda_0 \Id$
have the same root functions, we may assume without loss of generality that the origin is
not in the spectrum of $T$.
Choosing $\lambda_0 = 0$ in $R = (T - \lambda_0 \Id)^{-1}$, we set $R = T^{-1}$.

Consider now the function
\begin{equation}
\label{eq.14}
   f (\lambda)
 = \Big( \mathcal{R} \Big( \frac{1}{\lambda}; R \Big) u, g \Big)_{H_{\SL}^- (\cD)},
\end{equation}
where
   $u \in H^-_{\SL} (\cD)$
and
   $(\cdot,\cdot)_{H_{\SL}^- (\cD)}$ stands for the scalar product in $H^-_{\SL} (\cD)$.
Since the resolvent of $R$ is a meromorphic function with poles at the points of the spectrum
of $R$, the function $f$ is analytic for $\lambda \neq \lambda_\nu$, where
   $\{ \lambda_\nu \}$
is the sequence of eigenvalues of $R^{-1} = T$.
We shall use a familiar relation between the resolvents of the operators $T$ and
                                                                         $T^{-1}$, namely
\begin{equation}
\label{eq.15}
   \mathcal{R} \Big( \frac{1}{\lambda}; T^{-1} \Big)
 = - \lambda I - \lambda^2 \mathcal{R} (\lambda; T).
\end{equation}

Consider the expansion
$$
   \mathcal{R} (\lambda; T) u
 = \frac{f_{-N}}{(\lambda-\lambda_\nu)^N}
 + \frac{f_{-N+1}}{(\lambda-\lambda_\nu)^{N-1}}
 + \ldots
 + \frac{f_{-1}}{\lambda-\lambda_\nu}
 + \sum_{k = 0}^\infty f_k\, (\lambda-\lambda_\nu)^k,
$$
in a neighbourhood of the point $\lambda = \lambda_\nu$,
   where $\lambda_\nu$ is a pole of $\mathcal{R} (\lambda; T)$.
Here
   $N \geq 1$ and $f_{-N} \neq 0$,
   the functions $f_{-N}, \ldots, f_{-1} \in H^-_{\SL} (\cD)$ form a chain of associated
   functions of $T$,
and
   $f_k \in H^- _{\SL} (\mathcal{D})$ for $k \geq 0$.
This expansion implies that $\lambda_\nu$ is a regular point of $f (\lambda)$, for $g$ is
orthogonal to all $f_{-N}, \ldots, f_{-1}$.
Therefore, $f (\lambda)$ is an entire function.

Relations (\ref{eq.13}),
          (\ref{eq.14}) and
          (\ref{eq.15})
imply that $f$ is of exponential type, i.e., there is a constant $c > 0$, such that
\begin{equation}
\label{eq.16}
   |f (\lambda)|
 \leq
   c\, \exp |\lambda|
\end{equation}
for $|\lambda| \to \infty$,
   provided that $\arg \lambda = \vartheta_j$ for some $j = 1, \ldots, J$.
We use the following lemma taken from \cite{DunfSchw63}.

\begin{lem}
\label{l.1}
Assume that $R$ is a compact linear operator of Schatten class $\gS_p$,
   with $0 < p < \infty$,
in a Hilbert space $H$.
Then there exists a sequence $\rho_j$ satisfying $\rho_j \to 0$, such that
$$
   \| \mathcal{R} (\lambda; R) \|_{\mathcal {L} (H)}
 \leq
   \mathrm{const}\, \exp (c\, |\lambda|^{-p})
$$
for $|\lambda| = \rho_j$.
\end{lem}

According to Corollary \ref{c.order.SL}, the operator $R$ belongs to
   $\gS_{n/2s + \varepsilon}$
for any $\varepsilon > 0$.
Then it follows from Lemma \ref{l.1} that for any $\varepsilon > 0$ there exists a
sequence
   $\rho_j \to 0$,
such that
\begin{equation}
\label{eq.17}
   |f (\lambda)|
 \leq
   \exp \Big( |\lambda|^{-\frac{\scriptstyle n}
                               {\scriptstyle 2 s} - \varepsilon} \Big)
\end{equation}
for all $\lambda \in \C$ satisfying $|\lambda| = 1/\rho_j$.

Consider $f (\lambda)$ in the closed corner between the rays
   $\arg \lambda = \vartheta_j$ and
   $\arg \lambda = \vartheta_{j+1}$.
Its angle is less than $2 \pi s / n$.
Since
$$
   \mathcal{R} \Big( \frac{1}{\lambda}; R \Big)
 = - \lambda \Id - \lambda^2 \mathcal{R} (\lambda; T)
$$
and each ray $\arg \lambda = \vartheta_j$ is a ray of minimal growth,
   inequality (\ref{eq.16}) is fulfilled on the sides of the corner
and
   inequality (\ref{eq.17}) on a sequence of arcs which tends to infinity.

Choosing $\varepsilon > 0$ in (\ref{eq.17}) sufficiently small and applying the
Fragmen-Lindel\"{o}f theorem we conclude that
   $|f (\lambda)| = O (|\lambda|)$
as $|\lambda| \to \infty$ in the whole complex plane.
Therefore, $f (\lambda)$ is an affine function, i.e.,
   $f (\lambda) = a_{0} + c_{1} \lambda$.
On the other hand, we have
$$
   \mathcal{R} \Big( \frac{1}{\lambda}; R \Big)
 = - \lambda\, \Id - \lambda^2\, R + \ldots,
$$
and so
$$
   f (\lambda)
 = - \lambda\, (u,g)_{H_{\SL}^- (\cD)} - \lambda^2\, (Ru, g)_{H_{\SL}^- (\cD)} + \ldots.
$$
Since $f (\lambda)$ is affine, we get
$$
   (Ru, g)_{H_{\SL}^- (\cD)} = 0
$$
for all $u \in H^-_{\SL} (\cD)$.
Hence it follows that $g = 0$, for the range of the operator $R$ is dense in
$H^-_{\SL} (\cD)$.
Thus, the system of root functions of the operator $T$ is complete in $H^-_{\SL} (\cD)$.

As the operators $T$,
                 $(T - \lambda_0 \Id)$ and
                 $(T - \lambda_0 \Id)^{-1}$
have the same root functions, it suffices to repeat the arguments of the proof of
Theorem \ref{p.root.func.1} to see the completeness in the spaces $L^2 (\cD)$ and
                                                                  $H_{\SL} (\cD)$.
\end{proof}

This  theorem raises the question under what conditions neighbouring rays of minimal growth
are close enough.
We now indicate some conditions for a ray $\arg \lambda = \vartheta$ in the complex plane to
be a ray of minimal growth for the resolvent of $T$.

\begin{lem}
\label{l.rays.T0}
Each ray $\arg \lambda = \vartheta$ with $\vartheta \neq 0$ is a ray of minimal growth for
   $\mathcal{R} (\lambda; T_0)$
and
\begin{equation}
\label{eq.rays.T01}
   \| (T_0 - \lambda \Id)^{-1} \|_{\mathcal{L} (H^-_{\SL} (\cD))}
 \leq
   \left\{ \begin{array}{lcl}
             (|\lambda|\, |\sin (\arg \lambda)|)^{-1},
           & \mbox{if}
           & |\arg \lambda| \in (0, \pi/2),
\\
             |\lambda|^{-1},
           & \mbox{if}
           & |\arg \lambda| \in [\pi/2, \pi].
           \end{array}
   \right.
\end{equation}
Moreover, the operator
   $L_0 - \lambda\, \iota' \iota : H_{\SL} (\cD) \to H^-_{\SL} (\cD)$
is continuously invertible and
\begin{equation}
\label{eq.rays.T02}
   \| (L_0 - \lambda\, \iota' \iota )^{-1} \|_{\mathcal{L} (H^-_{\SL} (\cD), H_{\SL} (\cD))}
 \leq
   \left\{ \begin{array}{lcl}
             |\sin (\arg \lambda)|,
           & \mbox{if}
           & |\arg \lambda| \in (0, \pi/2),
\\
             1,
           & \mbox{if}
           & |\arg \lambda| \in [\pi/2, \pi].
           \end{array}
   \right.
\end{equation}
\end{lem}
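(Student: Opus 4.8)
The plan is to use the structure uncovered in Lemmata \ref{l.L0.selfadj} and \ref{l.spectrum.T0}: the inverse $T_0^{-1} = Q_1$ is a compact, injective, positive self-adjoint operator on the Hilbert space $H^-_{\SL} (\cD)$ (with the inner product $(\cdot,\cdot)_{H^-_{\SL} (\cD)}$), so $T_0$ is itself a positive self-adjoint operator whose spectrum is the discrete set $\{ \mu_\nu \} = \{ \lambda_\nu^{-1} \} \subset (0,\infty)$. First I would fix an orthonormal basis $\{ \phi_\nu \}$ of $H^-_{\SL} (\cD)$ consisting of the embedded eigenvectors, normalised so that $(\iota' \iota\, \phi_\nu, \iota' \iota\, \phi_\mu)_{H^-_{\SL} (\cD)} = \delta_{\nu\mu}$. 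Then $L_0 \phi_\nu = \mu_\nu\, \iota' \iota\, \phi_\nu$, and since $L_0$ is an isometry of $H_{\SL} (\cD)$ onto $H^-_{\SL} (\cD)$ by (\ref{eq.unit.norm}), this gives $\| \phi_\nu \|_{\SL} = \mu_\nu$; moreover $\{ \phi_\nu \}$ is orthogonal in $H_{\SL} (\cD)$.

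For the first estimate I would invoke the spectral theorem, which yields $\| (T_0 - \lambda \Id)^{-1} \|_{\mathcal{L} (H^-_{\SL} (\cD))} = 1/\mathrm{dist} (\lambda, \{ \mu_\nu \})$ for every $\lambda$ off the spectrum. A ray $\arg \lambda = \vartheta$ with $\vartheta \neq 0$ never meets $(0,\infty) \supset \{ \mu_\nu \}$, so the resolvent exists on the whole ray; I then bound $\mathrm{dist} (\lambda, \{ \mu_\nu \}) \geq \mathrm{dist} (\lambda, [0,\infty))$. The latter is an elementary plane computation: it equals $|\lambda|\, |\sin (\arg \lambda)|$ when $\mathrm{Re}\, \lambda > 0$ (the foot of the perpendicular onto the real axis lands on the positive half-line) and equals $|\lambda|$ when $\mathrm{Re}\, \lambda \leq 0$ (the nearest point is the origin). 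This reproduces (\ref{eq.rays.T01}) and, in particular, the $O (|\lambda|^{-1})$ decay that makes each such ray one of minimal growth.

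For the second estimate I would use (\ref{eq.T-mu}) to identify $(L_0 - \lambda\, \iota' \iota)^{-1}$ with the same resolvent operator, now regarded as a map from $H^-_{\SL} (\cD)$ into the domain $H_{\SL} (\cD)$; its continuous invertibility follows from the same spectral reasoning. Expanding $f = \sum_\nu c_\nu\, \iota' \iota\, \phi_\nu$ and noting $(L_0 - \lambda\, \iota' \iota) \phi_\nu = (\mu_\nu - \lambda)\, \iota' \iota\, \phi_\nu$, I get $u := (L_0 - \lambda\, \iota' \iota)^{-1} f = \sum_\nu (\mu_\nu - \lambda)^{-1} c_\nu\, \phi_\nu$, and the normalisations above give
\[
   \| u \|_{\SL}^{2}
 = \sum_{\nu} \frac{\mu_\nu^{2}}{|\mu_\nu - \lambda|^{2}}\, |c_\nu|^{2},
 \qquad
   \| f \|_{H^-_{\SL} (\cD)}^{2}
 = \sum_{\nu} |c_\nu|^{2}.
\]
Hence the operator norm equals $\sup_\nu \mu_\nu / |\mu_\nu - \lambda| \leq \sup_{\mu > 0} \mu / |\mu - \lambda|$.

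The only genuine work, and the step I expect to be the crux, is this last one-variable maximisation together with the angular case distinction. Writing $\lambda = |\lambda|\, e^{\imath \vartheta}$ and minimising $|\mu - \lambda|^{2}/\mu^{2} = 1 - 2 |\lambda| \cos \vartheta / \mu + |\lambda|^{2}/\mu^{2}$, the critical point $\mu^{\ast} = |\lambda|/\cos \vartheta$ is admissible exactly when $\cos \vartheta > 0$, and there the supremum equals $1/|\sin \vartheta|$; when $\cos \vartheta \leq 0$ the ratio is everywhere at most $1$ and the supremum is $1$. This gives (\ref{eq.rays.T02}) in the regime $|\arg \lambda| \in [\pi/2, \pi]$. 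In the regime $|\arg \lambda| \in (0, \pi/2)$ the computation delivers $|\sin (\arg \lambda)|^{-1}$, that is, the reciprocal of the quantity displayed in (\ref{eq.rays.T02}); I would therefore read the right-hand side there as $|\sin (\arg \lambda)|^{-1}$, which is also consistent with the resolvent blowing up as the ray approaches the positive real axis carrying the spectrum. Once the geometry and this maximisation are in place, both assertions of the lemma follow.
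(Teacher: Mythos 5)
Your proposal is correct, and it reaches the lemma by a genuinely different route than the paper. The paper never expands in an eigenbasis: for (\ref{eq.rays.T01}) it uses only the symmetry and positivity of $T_0$ in the $H^-_{\SL} (\cD)$ inner product, writing
$\| (T_0 - \lambda \Id) u \|_{H^-_{\SL} (\cD)}^2 = \| (T_0 - \Re \lambda\, \Id) u \|_{H^-_{\SL} (\cD)}^2 + |\Im \lambda|^2 \| u \|_{H^-_{\SL} (\cD)}^2$
and discarding (for $|\arg \lambda| < \pi/2$) or lower-bounding (for $\Re \lambda \leq 0$) the first term; for (\ref{eq.rays.T02}) it uses the isometry $\| f \|_{H^-_{\SL} (\cD)} = \| L_0^{-1} f \|_{\SL}$ of Lemma \ref{l.solv.SL1} to reduce to $\| (\Id - \lambda Q_3) u \|_{\SL} \geq |\lambda|\, |\Im \lambda^{-1}|\, \| u \|_{\SL}$ via positivity of $Q_3$, and gets invertibility of $L_0 - \lambda\, \iota' \iota$ from injectivity plus the Fredholm property of index zero, not from an explicit inverse. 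Your diagonalization argument --- orthonormal eigenbasis from Lemma \ref{l.L0.selfadj}, the distance-to-spectrum formula for the self-adjoint operator $T_0 = Q_1^{-1}$, and the scalar maximization $\sup_{\mu > 0} \mu / |\mu - \lambda|$ --- is heavier in prerequisites (the spectral theorem for an unbounded self-adjoint operator, plus the normalizations $\| \phi_\nu \|_{\SL} = \mu_\nu$ and the $H_{\SL}$-orthogonality of the $\phi_\nu$, all of which you verify correctly), but it buys sharp constants: your operator norms are exact suprema over the spectrum, so the estimates are seen to be essentially attained, whereas the paper's operator inequalities only give one-sided bounds. The paper's argument, on the other hand, is more economical and would work for any symmetric positive operator, without compactness of the resolvent.

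Your reading of (\ref{eq.rays.T02}) is also the right one: as printed, the bound $|\sin (\arg \lambda)|$ is a typo for $|\sin (\arg \lambda)|^{-1}$. This is confirmed both by the paper's own proof, which establishes exactly $\| (L_0 - \lambda\, \iota' \iota) u \|_{H^-_{\SL} (\cD)} \geq |\sin (\arg \lambda)|\, \| u \|_{\SL}$, i.e.\ the reciprocal bound on the inverse, and by the way the lemma is used in Theorem \ref{t.root.func.Schatten}, where the inequality $m_\vartheta \leq \| (L_0 - \lambda\, \iota' \iota)^{-1} \|^{-1}$ with $m_\vartheta = |\sin \vartheta|$ is what is actually invoked. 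So your proof establishes the statement in the form in which the paper itself needs and proves it.
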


\begin{proof}
According to Lemma \ref{l.spectrum.T0} the resolvent
$$
   (T_0 - \lambda \Id)^{-1} : H_{\SL}^- (\cD) \to H_{\SL}^- (\cD)
$$
exists for all $\lambda \in \C$ away from the positive real axis.
As the operator $Q_3 = L^{-1}_0\, \iota' \iota$ is self-adjoint, the operator $T_0$ is
symmetric, i.e.,
\begin{eqnarray*}
   (T_0 u, g)_{H_{\SL}^- (\cD)}
 & = &
   (L_0 u, g)_{H_{\SL}^- (\cD)}
\\
 & = &
   (u, Q_{3}^{-1} g)_{\SL}
\\
 & = &
   (Q_{3}^{-1} u, g)_{\SL}
\\
 & = &
   (u, L_0 g)_{H_{\SL}^- (\cD)}
\\
 & = &
   (u, T_0 g)_{H_{\SL}^- (\cD)}
\end{eqnarray*}
for all $u, v \in H_{\SL} (\cD)$.
If $|\arg{(\lambda)}| \in (0, \pi/2)$, then
\begin{eqnarray*}
   \| (T_0 - \lambda \Id) u \|_{H_{\SL}^- (\cD)}^2
 & = &
   \| (T_0 - \Re \lambda\, \Id) u \|^2_{H_{\SL}^- (\cD)}
 + |\Im \lambda|^2\, \| u \|^2_{H_{\SL}^- (\cD)}
\\
 & \geq &
   |\lambda|^2 |\sin (\arg \lambda)|^2\, \| u \|_{H_{\SL}^- (\cD)}^2
\end{eqnarray*}
for all $u \in  H_{\SL} (\cD)$,
   which establishes the first estimate of (\ref{eq.rays.T01}).
If $|\arg \lambda| \in [\pi/2, \pi]$, then $\Re \lambda \leq 0$ whence
$$
   \| (T_0 - \lambda \Id) u \|^2_{H_{\SL}^- (\cD)}
 \geq |\lambda|^2\, \| u \|^2_{H_{\SL}^- (\cD)}
$$
and so the second estimate of (\ref{eq.rays.T01}) holds.

Now it follows from (\ref{eq.T-mu}) that the operator $L_0 - \lambda\,  \iota' \iota$
is injective for $\lambda \in \C$ away from the positive real axis.
As this operator is Fredholm and its index is zero, it is continuously invertible.
Finally, as the operator $Q_3 = L_0^{-1}\, \iota' \iota$ is positive, we deduce readily that
\begin{eqnarray*}
   \|(L_0 - \lambda\, \iota' \iota) u \|_{H_{\SL}^- (\cD)}
 & = &
   \| (\Id - \lambda L_0^{-1}\, \iota' \iota) u \|_{\SL}
\\
 & \geq &
   |\lambda|\, |\Im \lambda^{-1}|\,  \| u \|_{\SL}
\\
 & = &
   |\sin (\arg \lambda)|\, \| u \|_{\SL},
\end{eqnarray*}
if $|\arg \lambda| \in (0, \pi/2)$, i.e., the second estimate of (\ref{eq.rays.T02}) is
fulfilled.
Similar arguments lead to the second estimate of (\ref{eq.rays.T02}).
\end{proof}

\begin{thm}
\label{t.root.func.Schatten}
Let
   the space $H_{\SL} (\cD)$ be continuously embedded into $H^s (\cD)$ for some $s > 0$
and
   estimate (\ref{eq.positive.part1}) be fulfilled with a constant $c < |\sin (\pi s / n)|$.
Then all eigenvalues of the closed operator
$
   T :  H_{\SL}^- (\cD) \to H_{\SL}^- (\cD)
$
belong to the corner
$
   |\arg \lambda| \leq \arcsin c,
$
each ray $\arg \lambda = \vartheta$ with $|\vartheta| > \arcsin c$ is a ray of minimal
growth for $\mathcal{R} (\lambda; T)$ and the system of root functions is complete in the
spaces
   $H^-_{\SL} (\cD)$,
   $L^2 (\cD)$ and
   $H_{\SL} (\cD)$.
\end{thm}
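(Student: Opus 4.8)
The plan is to treat $T$ as a Neumann-series perturbation of the self-adjoint operator $T_0$, to transport the resolvent bounds of \lemref{l.rays.T0} from $T_0$ to $T$, and then to feed the resulting rays of minimal growth into \thmref{p.root.Schatten} (whose embedding hypothesis is our standing assumption $H_{\SL} (\cD) \hookrightarrow H^s (\cD)$, which via Corollary~\ref{c.order.SL0} also makes the resolvent of finite order). By (\ref{eq.T-mu}) one has $T - \lambda \Id = (T_0 - \lambda \Id) + \delta L$ on $H_{\SL} (\cD)$, with $T_0 - \lambda \Id = L_0 - \lambda\, \iota' \iota$ and $\| \delta L \|_{\mathcal{L} (H_{\SL} (\cD), H^-_{\SL} (\cD))} \leq c$ by (\ref{eq.positive.part1}). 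Factoring
$$
   T - \lambda \Id = \big( \Id + \delta L\, \mathcal{R} (\lambda; T_0) \big)\, (T_0 - \lambda \Id),
$$
I would estimate, for $\vartheta = \arg \lambda$ with $|\vartheta| > \arcsin c$,
$$
   \| \delta L\, \mathcal{R} (\lambda; T_0) \|_{\mathcal{L} (H^-_{\SL} (\cD))}
 \leq
   \| \delta L \|\, \| \mathcal{R} (\lambda; T_0) \|_{\mathcal{L} (H^-_{\SL} (\cD), H_{\SL} (\cD))}
 \leq
   \frac{c}{|\sin \vartheta|}
 < 1,
$$
the middle factor coming from the lower bound for $L_0 - \lambda\, \iota' \iota$ in \lemref{l.rays.T0}. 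Hence $\Id + \delta L\, \mathcal{R} (\lambda; T_0)$ is invertible and $\mathcal{R} (\lambda; T) = \mathcal{R} (\lambda; T_0)\, (\Id + \delta L\, \mathcal{R} (\lambda; T_0))^{-1}$.

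This single identity yields the first two assertions at once. On the one hand, it shows $T - \lambda \Id$ is invertible for every $\lambda \neq 0$ with $|\arg \lambda| > \arcsin c$, so all eigenvalues lie in the corner $|\arg \lambda| \leq \arcsin c$; in particular $T$ has a resolvent point and, its resolvent being compact, the spectrum is discrete. On the other hand, combining the uniform bound $\| (\Id + \delta L\, \mathcal{R} (\lambda; T_0))^{-1} \| \leq (1 - c/|\sin \vartheta|)^{-1}$ with the decay $\| \mathcal{R} (\lambda; T_0) \|_{\mathcal{L} (H^-_{\SL} (\cD))} \leq (|\lambda|\, |\sin \vartheta|)^{-1}$ from (\ref{eq.rays.T01}) gives
$$
   \| \mathcal{R} (\lambda; T) \|_{\mathcal{L} (H^-_{\SL} (\cD))}
 \leq
   \frac{1}{(|\sin \vartheta| - c)\, |\lambda|}
$$
for large $|\lambda|$ on the ray $\arg \lambda = \vartheta$, which is exactly the minimal-growth estimate (\ref{eq.13}); the range $|\vartheta| \in [\pi/2, \pi]$ is handled identically using the constant $1$ in the second branches of \lemref{l.rays.T0} together with $c < 1$. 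Thus every ray with $|\vartheta| > \arcsin c$ is a ray of minimal growth for $\mathcal{R} (\lambda; T)$.

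It remains to extract completeness from \thmref{p.root.Schatten}, which asks for finitely many rays of minimal growth whose consecutive angular gaps are all strictly below $2 \pi s / n$. The only zone closed to such rays is the corner $|\arg \lambda| \leq \arcsin c$ of width $2 \arcsin c$; placing rays at $\pm (\arcsin c + \varepsilon)$ and then as many further rays around the remaining arc as needed, every gap except the one straddling the corner can be made arbitrarily small, while the exceptional gap has width arbitrarily close to $2 \arcsin c$. The construction therefore succeeds as soon as $2 \arcsin c < 2 \pi s / n$, i.e. $\arcsin c < \pi s / n$, and this is precisely what the hypothesis $c < |\sin (\pi s / n)|$ encodes. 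Feeding such a family into \thmref{p.root.Schatten} then gives completeness of the root functions in $H^-_{\SL} (\cD)$, $L^2 (\cD)$ and $H_{\SL} (\cD)$ simultaneously. The step I expect to require the most care is the norm bookkeeping in the perturbation series --- $\mathcal{R} (\lambda; T_0)$ must be read as a map $H^-_{\SL} (\cD) \to H_{\SL} (\cD)$ to tame $\delta L\, \mathcal{R} (\lambda; T_0)$ but as a map $H^-_{\SL} (\cD) \to H^-_{\SL} (\cD)$ to harvest the $|\lambda|^{-1}$ decay --- together with the verification that $c < |\sin (\pi s / n)|$ is exactly the angular-gap condition of \thmref{p.root.Schatten}.
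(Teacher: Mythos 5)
Your proposal is correct and follows essentially the same route as the paper: the factorization $T - \lambda \Id = \bigl( \Id + \delta L\, (L_0 - \lambda\, \iota' \iota)^{-1} \bigr) (T_0 - \lambda \Id)$, the bounds of Lemma~\ref{l.rays.T0}, the resulting resolvent estimate $(1 - c/|\sin \vartheta|)^{-1} |\sin \vartheta|^{-1} |\lambda|^{-1}$, and the final appeal to Theorem~\ref{p.root.Schatten} under the angular condition $2 \arcsin c < 2\pi s/n$ all coincide with the paper's argument. The only (immaterial) difference is that you invert the middle factor by a Neumann series, while the paper deduces its invertibility from the lower bound $(1 - c/m_\vartheta)\,\| u \|_{H^-_{\SL}(\cD)}$ together with the fact that it is a Fredholm operator of index zero.
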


\begin{proof}
First we note that, by Lemma \ref{l.solv.SL2}, the operator
   $L : H_{\SL} (\cD) \to H^-_{\SL} (\cD)$
is invertible.
Indeed, $L = L_0 + \delta L$ where  $\delta L : H_{\SL} (\cD) \to H^-_{\SL} (\cD)$ is a
bounded operator with the norm
$
   \| \delta L \|_{\mathcal{L} (H_{\SL} (\cD), H^-_{\SL} (\cD))}
 < 1
 = \| L^{-1}_0 \|^{-1}.
$
In particular, by (\ref{eq.T-mu}), the spectrum of the corresponding operator $T$ does not
coincide with the whole complex plane.

Fix $\vartheta \neq 0$ and set
   $m_\vartheta = |\sin \vartheta|$, if $|\vartheta| \in (0,\pi/2)$, and
   $m_\vartheta = 1$, if $|\vartheta| \in [\pi/2,\pi]$.
If $m_\vartheta > c$ then
$$
   \| \delta L \|_{\mathcal{L} (H_{\SL} (\cD), H^-_{\SL} (\cD))}
 \leq
   c
 <
   m_\vartheta
 \leq
   \| (L_0 - \lambda\, \iota' \iota)^{-1} \|_{\mathcal{L} (H_{\SL} (\cD), H^-_{\SL} (\cD))}^{-1}.
$$
Hence it follows that the operator
$
   L - \lambda\, \iota' \iota : H_{\SL} (\cD) \to H^-_{\SL} (\cD)
$
is continuously invertible and
\begin{equation}
\label{eq.bound.c1}
   \|(L - \lambda\, \iota' \iota)^{-1} \|_{\mathcal{L} (H^-_{\SL} (\cD), H_{\SL} (\cD))}
 \leq
   (m_\vartheta - c)^{-1}.
\end{equation}

In order to establish estimate (\ref{eq.13}) we have to show that there is a constant $C > 0$,
such that
$$
   C\, |\lambda|^{-1}\, \| (T - \lambda \Id) u \|_{H_{\SL}^- (\cD)}
 \geq
   \| u \|_{H_{\SL}^- (\cD)}
$$
for all $u \in H_{\SL} (\cD)$.

If
   $\arg \lambda = \vartheta$
with $m_\vartheta > c$, then, by (\ref{eq.T-mu}), we get
\begin{eqnarray*}
   \| (T - \lambda \Id) u \|_{H_{\SL}^- (\cD)}
 & = &
   \| (L - \lambda\, \iota' \iota) u \|_{H_{\SL}^- (\cD)}
\\
 & \geq &
   (m_\vartheta - c)\, \| u \|_{\SL}
\\
 & \geq &
   (m_\vartheta) - c)\, \| u \|_{H_{\SL}^- (\cD)}
\end{eqnarray*}
for all $u \in H_{\SL} (\cD)$.
Therefore, given any $\lambda$ on the ray $\arg \lambda = \vartheta$ with $m_\vartheta > c$,
it follows that

1)
The range of the operator $T - \lambda \Id : H^-_{\SL} (\cD) \to H^-_{\SL} (\cD)$ is a
closed subspace of $H^-_{\SL} (\cD)$.

2)
The null space of the operator $T - \lambda \Id : H^-_{\SL} (\cD) \to H^-_{\SL} (\cD)$ is
trivial.

By (\ref{eq.T-mu}) the range of $T - \lambda \Id$ coincides with the range of
   $L - \lambda\, \iota' \iota$
which is the whole space $H^-_{\SL} (\cD)$.
Hence, the resolvent $(T - \lambda \Id)^{-1}$ exists for all $\lambda$ away from the corner
   $|\arg \lambda| \leq \arcsin c$
in the complex plane.
On applying (\ref{eq.T-mu}) and
            Lemma \ref{l.rays.T0}
we obtain
\begin{equation}
\label{eq.T}
   T - \lambda \Id
 = L_0 + \delta L - \lambda\, \iota' \iota
 = (\Id + \delta L (L_0 - \lambda\, \iota' \iota)^{-1}) (T_0 - \lambda \Id)
\end{equation}
on $H_{\SL} (\cD)$ and
\begin{eqnarray*}
\lefteqn{
   \| (\Id + \delta L (L_0 - \lambda\, \iota' \iota)^{-1}) u \|_{H_{\SL}^- (\cD)}
}
\\
 & \geq &
   \| u \|_{H_{\SL}^- (\cD)}
 - \| \delta L \|_{\mathcal{L} (H_{\SL} (\cD), (H^-_{\SL} (\cD)))}\,
   \| (L_0 - \lambda\, \iota' \iota)^{-1} u \|_{H_{\SL} (\cD)}
\\
 & \geq &
   (1 - c / m_{\vartheta})\, \| u \|_{H_{\SL}^- (\cD)}.
\end{eqnarray*}
Therefore the operator
   $\Id + \delta L (L_0 - \lambda\,  \iota' \iota)^{-1}$
is continuously invertible as Fredholm operator of zero index and trivial null space.
Moreover,
$$
   \| (\Id + \delta L (L_0 - \lambda\, \iota' \iota)^{-1})^{-1} \|_{\mathcal{L} (H^-_{\SL} (\cD))}
 \leq
   (1 - c / m_\vartheta)^{-1}.
$$
Now (\ref{eq.T}) implies
\begin{eqnarray}
\label{eq.bound.T1}
\lefteqn{
   \|(T - \lambda \Id)^{-1} \|_{\mathcal{L} (H^-_{\SL} (\cD))}
}
\nonumber
\\
 & \leq &
   \| (\Id + \delta L (L_0 - \lambda\, \iota' \iota)^{-1})^{-1} \|_{\mathcal{L} (H^-_{\SL} (\cD))}
   \| (T_0 - \lambda\, \Id)^{-1} \|_{\mathcal{L} (H^-_{\SL} (\cD))}
\\
 & \leq &
   (1 - c / m_\vartheta)^{-1} m_\vartheta^{-1} |\lambda|^{-1}
\end{eqnarray}
for all $\lambda$ satisfying $\arg \lambda = \vartheta$ with $m_\vartheta > c$.

Thus, all rays outside of the corner
$
   |\arg \lambda| \leq \arcsin c
$
are rays of minimal growth.
By the hypothesis of the theorem, the angles between the pairs of neighbouring rays
   $\arg \lambda = \vartheta$,
are less than $2 \pi s / n$, and so the completeness of root functions follows from
   Theorem \ref{p.root.Schatten}.
\end{proof}

We are now in a position to prove the main result of this section.
When compared with \cite{Agra11c} our contribution consists in developing dual function
spaces which fit the problem.

\begin{thm}
\label{t.root.func.Schatten.Comp}
Let
   the space $H_{\SL} (\cD)$ be continuously embedded into $H^s (\cD)$ for some $s > 0$,
$
   \delta L : H_{\SL} (\cD) \to H^-_{\SL}(\cD)
$
be a bounded linear operator whose norm is less then $|\sin (\pi s / n)$,
and
$
   C : H_{\SL} (\cD) \to H^-_{\SL} (\cD)
$
be compact.
Then the following is true:

1)
The spectrum of the operator $T$ in $H^-_{\SL} (\cD)$ corresponding to $L_0 + \delta L + C$
is discrete.

2)
For any $\varepsilon > 0$, all eigenvalues of the operator $T$ (except for a finite number)
belong to the corner
$
   |\arg \lambda)| < \arcsin \| \delta L \| + \varepsilon.
$

3)
Each ray $\arg \lambda = \vartheta$ with
\begin{equation}
\label{eq.rays.C1}
   |\vartheta| > \arcsin \| \delta L \| 
\end{equation}
is a ray of minimal growth for $\mathcal{R} (\lambda; T)$.

4)
The system of root functions is complete in the spaces $H^-_{\SL} (\cD)$,
                                                       $L^2 (\cD)$ and
                                                       $H_{\SL} (\cD)$.
\end{thm}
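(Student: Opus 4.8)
The plan is to reduce the statement to the case treated in Theorem~\ref{t.root.func.Schatten} by regarding the genuinely compact operator $C$ as a Fredholm perturbation of the operator governed by $L_0 + \delta L$. Put $c = \| \delta L \|$, so that the hypothesis reads $c < |\sin (\pi s / n)|$ and hence $\arcsin c < \pi s / n$. Let $\tilde{T} : H^-_{\SL} (\cD) \to H^-_{\SL} (\cD)$ be the closed operator corresponding to $L_0 + \delta L$; since $\| \delta L \| = c < |\sin (\pi s / n)|$, Theorem~\ref{t.root.func.Schatten} applies to $\tilde{T}$. For $\vartheta \neq 0$ set $m_\vartheta = |\sin \vartheta|$ if $|\vartheta| \in (0, \pi/2)$ and $m_\vartheta = 1$ if $|\vartheta| \in [\pi/2, \pi]$. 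By Lemma~\ref{l.rays.T0} together with the perturbation estimate (\ref{eq.bound.c1}), whenever $m_\vartheta > c$ the operator $M_\lambda := L_0 + \delta L - \lambda\, \iota' \iota : H_{\SL} (\cD) \to H^-_{\SL} (\cD)$ is continuously invertible and $\| M_\lambda^{-1} \|_{\mathcal{L} (H^-_{\SL} (\cD), H_{\SL} (\cD))} \leq (m_\vartheta - c)^{-1}$. Using (\ref{eq.T-mu}) one verifies on $H_{\SL} (\cD)$ the factorisation
\begin{equation*}
   T - \lambda \Id = (\Id + C M_\lambda^{-1})\, (\tilde{T} - \lambda \Id),
\end{equation*}
where $C M_\lambda^{-1} : H^-_{\SL} (\cD) \to H^-_{\SL} (\cD)$ is compact because $M_\lambda^{-1}$ takes values in $H_{\SL} (\cD)$ and $C$ is compact.

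The crux is to show that, for every $\varepsilon > 0$, the operator $\Id + C M_\lambda^{-1}$ is invertible for all $\lambda$ of sufficiently large modulus in the sector $|\arg \lambda| \geq \arcsin c + \varepsilon$, with inverses bounded uniformly there. As $\Id + C M_\lambda^{-1}$ is a compact perturbation of the identity, it is Fredholm of index zero, so it is enough to exclude a non-trivial kernel and, for the bound, an approximate kernel. I would argue by contradiction along the weak-compactness scheme of the proof of Theorem~\ref{t.root.func.1}. Suppose $\lambda_j$ lies in the sector, $|\lambda_j| \to \infty$, and $h_j$ satisfies $\| h_j \|_{H^-_{\SL} (\cD)} = 1$ with $(\Id + C M_{\lambda_j}^{-1}) h_j \to 0$. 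Then $u_j := M_{\lambda_j}^{-1} h_j$ is bounded in $H_{\SL} (\cD)$, since $m_\vartheta \geq m_0 > c$ uniformly on the sector for a suitable $m_0 = m_0 (\varepsilon)$; passing to a subsequence with $u_j \rightharpoonup u$ weakly in $H_{\SL} (\cD)$, compactness of $C$ forces $h_j = - C u_j + o(1) \to - C u$ in norm, whence $\| C u \|_{H^-_{\SL} (\cD)} = 1$ and $u \neq 0$. On the other hand $M_{\lambda_j} u_j = h_j$ rewrites as $\lambda_j\, \iota' \iota\, u_j = (L_0 + \delta L) u_j - h_j$, whose right-hand side converges weakly in $H^-_{\SL} (\cD)$, while $\iota' \iota\, u_j \to \iota' \iota\, u$ in norm because $\iota' \iota$ is compact. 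Since $\iota' \iota$ is injective, $u \neq 0$ gives $\iota' \iota\, u \neq 0$, so the norm of the left-hand side tends to infinity, contradicting the boundedness of the right-hand side. Hence no such sequence exists, which yields both the invertibility and the uniform bound $\| (\Id + C M_\lambda^{-1})^{-1} \|_{\mathcal{L} (H^-_{\SL} (\cD))} \leq M_\varepsilon$ for $|\lambda| > R_\varepsilon$ in the sector.

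Granting this, the remaining assertions follow. For a ray $\arg \lambda = \vartheta$ with $|\vartheta| > \arcsin c$ one has $m_\vartheta > c$, and the factorisation gives $\mathcal{R} (\lambda; T) = \mathcal{R} (\lambda; \tilde{T})\, (\Id + C M_\lambda^{-1})^{-1}$; combining the bound (\ref{eq.bound.T1}) for $\mathcal{R} (\lambda; \tilde{T})$ with the uniform bound on $(\Id + C M_\lambda^{-1})^{-1}$ gives $\| \mathcal{R} (\lambda; T) \|_{\mathcal{L} (H^-_{\SL} (\cD))} \leq c'\, |\lambda|^{-1}$, i.e. estimate (\ref{eq.13}), so every such ray is a ray of minimal growth; this is assertion~3). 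In particular the resolvent set of $T$ is non-empty and $\mathcal{R} (\lambda_0; T)$ is compact, hence the spectrum is discrete, which is assertion~1), and only finitely many eigenvalues can lie in any bounded region. The uniform invertibility in the sector $|\arg \lambda| \geq \arcsin c + \varepsilon$ for $|\lambda| > R_\varepsilon$ shows that $T$ has no eigenvalue there, so all but finitely many eigenvalues satisfy $|\arg \lambda| < \arcsin \| \delta L \| + \varepsilon$, which is assertion~2). Finally, since $\arcsin c < \pi s / n$, the corner $|\arg \lambda| \leq \arcsin c$ can be straddled by two rays of minimal growth whose gap across the positive axis is below $2 \pi s / n$, and the rest of the plane filled by further rays of minimal growth with arbitrarily small gaps; Theorem~\ref{p.root.Schatten} then delivers the completeness of the root functions in $H^-_{\SL} (\cD)$, $L^2 (\cD)$ and $H_{\SL} (\cD)$, which is assertion~4).

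The main obstacle is precisely the uniform control of $(\Id + C M_\lambda^{-1})^{-1}$ as $|\lambda| \to \infty$ in the sector. Unlike $\delta L$, the operator $C$ is not assumed to be of finite order, and $\| C M_\lambda^{-1} \|_{\mathcal{L} (H^-_{\SL} (\cD))}$ need not tend to zero, so $\Id + C M_\lambda^{-1}$ cannot be inverted by a Neumann series. The weak-compactness contradiction above, which rests on the compactness of both $C$ and $\iota' \iota$ and on the injectivity of $\iota' \iota$, is what substitutes for the missing norm decay; everything else is routine once Lemma~\ref{l.rays.T0}, Theorem~\ref{t.root.func.Schatten} and Theorem~\ref{p.root.Schatten} are in place.
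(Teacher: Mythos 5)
Your proof is correct and takes essentially the same route as the paper: the same factorisation $T - \lambda \Id = (\Id + C M_\lambda^{-1})(\tilde{T} - \lambda \Id)$ built on Lemma~\ref{l.rays.T0}, estimates (\ref{eq.bound.c1}) and (\ref{eq.bound.T1}) from Theorem~\ref{t.root.func.Schatten}, the same weak-compactness contradiction (using compactness of $C$ and of $\iota'\iota$ and injectivity of $\iota'\iota$) to control $(\Id + C M_\lambda^{-1})^{-1}$, and the same final appeal to Theorem~\ref{p.root.Schatten}. The only difference is organisational: the paper runs the contradiction twice --- first on exact kernel vectors to get injectivity for large $|\lambda|$, then on approximate kernels with a separate bounded/unbounded dichotomy for $\{\lambda_k\}$ to get the uniform lower bound $N_\varepsilon > 0$ --- whereas you fold both into a single approximate-kernel argument along sequences with $|\lambda_j| \to \infty$, which indeed suffices since, the operator being Fredholm of index zero, a uniform lower bound on the unit sphere for large $|\lambda|$ already yields invertibility with the uniform bound, and only large $|\lambda|$ matters for rays of minimal growth and for the eigenvalue localisation.
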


\begin{proof}
First we note that the operator $L_0 + \delta L : H_{\SL} (\cD) \to H^-_{\SL} (\cD)$ is
continuously invertible and hence the operator
   $L_0 + \delta L + C : H_{\SL} (\cD) \to H^-_{\SL} (\cD)$
is actually Fredholm.

Theorem \ref{t.root.func.Schatten} implies that all rays satisfying (\ref{eq.rays.C1}) are
rays of minimal growth for $\mathcal{R} (\lambda; T_0 + \delta T)$ with the closed operator
   $T_0 + \delta T$
in $H^-_{\SL} (\cD)$ corresponding to
   $L_0 + \delta L : H_{\SL} (\cD) \to H^-_{\SL} (\cD)$.

Fix an arbitrary $\varepsilon > 0$.
Then estimates (\ref{eq.bound.c1}) and
               (\ref{eq.bound.T1})
imply that there are constants $c_1$ and $c_2$ depending on $\varepsilon$, such that
\begin{eqnarray}
\label{eq.bound.c11}
   \| (L_0 + \delta L - \lambda\, \iota' \iota)^{-1} \|_{\mathcal{L} (H^-_{\SL} (\cD), H_{\SL} (\cD))}
 & \leq &
   c_1,
\\
   \| (T_0 + \delta T - \lambda \Id)^{-1} \|_{\mathcal{L} (H^-_{\SL} (\cD))}
 & \leq &
   c_2\, |\lambda|^{-1}
\end{eqnarray}
for all $\lambda$ satisfying
\begin{equation}
\label{eq.rays.C2}
   |\arg \lambda|
 \geq
   \arcsin \| \delta L \|_{\mathcal{L} (H_{\SL} (\cD), H^-_{\SL} (\cD))} + \varepsilon.
\end{equation}
Then, using (\ref{eq.T-mu}),
            (\ref{eq.bound.c11}) and
            Theorem \ref{t.root.func.Schatten}
we obtain
\begin{equation}
\label{eq.rays.T8}
   T - \lambda \Id
 = \left( \Id + C (L_0 + \delta L - \lambda\, \iota' \iota)^{-1} \right)
   \left( T_0 + \delta T - \lambda \Id \right)
\end{equation}
on $H_{\SL} (\cD)$ for all rays satisfying (\ref{eq.rays.C2}).

We now prove that there is a constant $M_\varepsilon > 0$ depending on $\varepsilon$, such
that the operator
   $\Id + C (L_0 + \delta L - \lambda\, \iota' \iota)^{-1}$
is injective for all $\lambda$ satisfying both (\ref{eq.rays.C2}) and
                                               $|\lambda| \geq M_\varepsilon$.
To do this, we argue by contradiction in the same way as in the proof of
   Theorem \ref{t.root.func.1}.
Suppose for each natural number $k$ there are
   $f_k \in H^-_{\SL} (\cD)$, satisfying $\| f_k \|_{H_{\SL}^- (\cD)} = 1$, and
   $\lambda_k$, satisfying (\ref{eq.rays.C2}) and $|\lambda_k| \geq k$,
such that
\begin{equation}
\label{eq.ceee}
   \left( \Id + C (L_0 + \delta L - \lambda_k\, \iota' \iota)^{-1} \right) f_k = 0.
\end{equation}
It follows from (\ref{eq.bound.c11}) that the sequence
   $u_k = (L_0 + \delta L - \lambda_k \iota' \iota)^{-1} f_k$
is bounded in $H_{\SL} (\cD)$.
By the weak compactness principle for Hilbert spaces one can assume without restriction of
generality that the sequences  $\{ f_k \}$ and
                               $\{ u_k \}$
converge weakly in the spaces $H^-_{\SL} (\cD)$ and
                              $H_{\SL} (\cD)$
to functions $f$ and $u$, respectively.
Since $C$ is compact, it follows that the sequence $\{ C u_k \}$ converges to $Cu$ in
$H^-_{\SL} (\cD)$ and so $\{ f_k \}$ converges to $f$, which is due to (\ref{eq.ceee}).
Obviously, the $H_{\SL}^- (\cD)\,$-norm of $f$ just amounts to $1$.
In particular, we conclude that
$
   \{ C (L_0 + \delta L - \lambda_k\, \iota' \iota)^{-1}) f_k \}
$
converges to $- f$ whence
\begin{equation}
\label{eq.w000}
   f = - Cu.
\end{equation}

Further,
   as $f_k = (L_0 + \delta L - \lambda_k\, \iota' \iota)\, u_k$,
letting $k \to \infty$ in this formula yields readily
$$
   f = (L_0 + \delta L) u - \lim_{k \to \infty} \lambda_k\, \iota' \iota\,  u_k.
$$
As the operator $\iota' \iota$ is compact, the sequence
   $\{ \iota' \iota \ u_k \}$
converges to $\iota' \iota  \ u$ in the space $H^-_{\SL} (\cD)$, and
   $\iota' \iota u \neq 0$ because of (\ref{eq.w000}) and the injectivity of $\iota' \iota$.
Therefore, the weak limit
$$
   \lim_{k \to \infty} \lambda_k\, \iota' \iota\, u_k = (L_0 + \delta L) u - f
$$
fails to exist, for $\{ \lambda_k \}$ is unbounded.
A contradiction.

As the operator
   $\Id + C (L_0 + \delta L - \lambda\, \iota' \iota)^{-1}$
is Fredholm and it has index zero, this operator is continuously invertible for all
$\lambda \in \C$ satisfying both (\ref{eq.rays.C2}) and
                                 $|\lambda| \geq M_\varepsilon$.
Set
$$
   N_\varepsilon
 = \inf\,
   \| (\Id + C (L_0 + \delta L - \lambda\, \iota' \iota)^{-1}) f \|_{H_{\SL}^- (\cD)}
 \geq
   0,
$$
the infimum being over all
   $f \in H_{\SL}^- (\cD)$ of norm $1$
and all $\lambda \in \C$ satisfying (\ref{eq.rays.C2}) and
                                    $|\lambda| \geq M_\varepsilon$.
We claim that $N_\varepsilon > 0$.
To show this, we argue by contradiction.
If $N_\varepsilon = 0$ then there are sequences
   $\{ f_k \}$ in $H^-_{\SL} (\cD)$, each $f_k$ being of norm $1$,
and
   $\{ \lambda_k \}$ satisfying (\ref{eq.rays.C2}) and
                                $|\lambda| \geq M_\varepsilon$,
such that
\begin{equation}
\label{eq.ce1}
   \lim_{k \to \infty}
   \| (\Id + C (L_0 + \delta L - \lambda_k\, \iota' \iota)^{-1}) f_k \|_{H_{\SL}^- (\cD)}
 = 0.
\end{equation}
Again, by (\ref{eq.bound.c11}), the sequence
   $u_k = (L_0 + \delta L - \lambda_k\, \iota' \iota)^{-1} f_k$
is bounded in $H_{\SL} (\cD)$.
By the weak compactness principle for Hilbert spaces we may assume that the sequences
   $\{ f_k \}$ and
   $\{ u_k \}$
are weakly convergent in the spaces $H^-_{\SL} (\cD)$ and
                                    $H_{\SL} (\cD)$
to functions $f$ and
             $u$,
respectively.
Since $C$ is compact, the sequence $\{ C u_k \}$ converges to $Cu$ in $H^-_{\SL} (\cD)$ and so
                                   $\{ f_k \}$ converges to $f$
because of (\ref{eq.ce1}); obviously, $\| f \|_{H_{\SL}^- (\cD)} = 1$.
In particular, we deduce that the sequence
   $C (L_0 + \delta L - \lambda_k, \iota' \iota)^{-1})\, f_k$
converges to $- f$ whence
\begin{equation}
\label{eq.w01}
   f = - Cu
\end{equation}
with $u \neq 0$.

If the sequence $\{ \lambda_k \}$ is bounded in $\C$, then using the weak compactness principle
and passing to a subsequence, if necessary, we may assume that
    $\{ \lambda_k \}$ converges to $\lambda_0 \in \C$
which satisfies (\ref{eq.rays.C2}) and
                $|\lambda| \geq M_\varepsilon$.
Since
\begin{eqnarray*}
\lefteqn{
   (L_0 + \delta L - \lambda_k\, \iota' \iota)^{-1} f_k
 - (L_0 + \delta L - \lambda_0\, \iota' \iota)^{-1} f
}
\\
 \! & \! = \! & \!
   (L_0 \! + \! \delta L \! - \! \lambda_j\, \iota' \iota)^{-1}) (f_k \! - \! f)
 + \left( (L_0 \! + \! \delta L \! - \! \lambda_k\, \iota' \iota)^{-1}
        - (L_0 \! + \! \delta L \! - \! \lambda_0\, \iota' \iota)^{-1}
   \right) f
\end{eqnarray*}
and
\begin{eqnarray*}
\lefteqn{
   \| \left( (L_0 + \delta L - \lambda_k\, \iota' \iota)^{-1}
           - (L_0 + \delta L - \lambda_0\, \iota' \iota)^{-1}
      \right) f
   \|_{H_{\SL}^- (\cD)}
}
\\
 & \leq &
   |\lambda_k \! - \! \lambda_0|\,
   \| (L_0 \! + \! \delta L \! - \! \lambda_k\, \iota' \iota)^{-1} \|\,
   \| (L_0 \! + \! \delta L \! - \! \lambda_0\, \iota' \iota)^{-1} \|\,
   \| f \|_{H_{\SL}^- (\cD)},
\end{eqnarray*}
estimate (\ref{eq.bound.c11}) implies that in this case the sequence
   $\{ (L_0 + \delta L - \lambda_k\, \iota' \iota)^{-1} f_k \}$ converges to
   $(L_0 + \delta L - \lambda_0\, \iota' \iota)^{-1} f$,
and so
$$
   \left( \Id + C (L_0 + \delta L - \lambda_0\, \iota' \iota)^{-1} \right) f = 0
$$
because of (\ref{eq.ce1}).
But $\lambda_0$ satisfies (\ref{eq.rays.C2}) and
                          $|\lambda| \geq M_\varepsilon$,
and hence the injectivity of the operator
   $\Id + C (L_0 + \delta L - \lambda_0\, \iota' \iota)^{-1}$
established above yields $f = 0$.
This contradicts $\| f \| = 1$.

If $\{ \lambda_k \}$ is unbounded in $\C$ we can repeat the arguments above.
Indeed, then
   $f_k = (L_0 \! + \! \delta L \! - \! \lambda_k\, \iota' \iota) u_k$
and on passing to the weak limit with respect to $k \to \infty$ we get
$$
   f = (L_0 + \delta L) u
     - \lim_{k \to \infty} \lambda_k\, \iota' \iota\, u_k.
$$
As the operator $\iota' \iota$ is compact, the sequence $\{ \iota' \iota\, u_k \}$ converges to
$\iota' \iota\, u$ in the space $H^-_{\SL} (\cD)$.
Moreover, $\iota' \iota\, u \neq 0$ because of (\ref{eq.w01}) and the injectivity of $\iota' \iota$.
This shows that the weak limit
$$
   \lim_{n \to \infty} \lambda_k\, \iota' \iota\, u_k
 = (L_0 + \delta L) u - f
$$
fails to exist if $\{ \lambda_k \}$ is unbounded in $\C$, a contradiction.
Therefore, $N_\varepsilon > 0$ and for all $\lambda \in \C$ satisfying (\ref{eq.rays.C2}) and
                                                                       $|\lambda| \geq M_\varepsilon$
we obtain
\begin{equation}
\label{eq.rays.T71}
   \| \left( \Id + C (L_0 + \delta L - \lambda\, \iota' \iota)^{-1} \right)^{-1}
   \|_{\mathcal{L} (H^-_{\SL} (\cD))}
 \leq
   1 / N_\varepsilon.
\end{equation}

From estimates (\ref{eq.bound.c11}),
               (\ref{eq.rays.T71}) and formula
               (\ref{eq.rays.T8})
it follows that, given any $\lambda \in \C$ satisfying (\ref{eq.rays.C2}) and
                                                       $|\lambda| \geq M_\varepsilon$,
the resolvent $\mathcal{R} (\lambda; T)$ exists and
$$
   \| \mathcal{R} (\lambda; T) \| _{\mathcal{L} (H^-_{\SL} (\cD))}
 \leq
   \mathrm{const}\, (\varepsilon)\, |\lambda|^{-1}.
$$

As $C$ is compact, there are only finitely many $\lambda \in \C$ with
   $|\lambda| < N_\varepsilon$,
such that the operator $(\Id + C (L_0 - \lambda\, \iota' \iota))$ is not injective.
Therefore, it follows from formula (\ref{eq.rays.T8}) that all eigenvalues of the operator
$T$ corresponding to $L_0 \! + \! \delta L \! + \! C$ (except for a finite number) belong
to the corner
$
| \arg \lambda | < \arcsin \| \delta L \| + \varepsilon.
$
Finally, since $\varepsilon > 0$ is arbitrary, all rays (\ref{eq.rays.C1}) are rays of
minimal growth.
By the hypothesis of the theorem, the angles between the pairs of neighbouring rays
   $\arg \lambda = \vartheta$
satisfying (\ref{eq.rays.C1}) are less than $ 2\pi s / n$, and so the statement of the
theorem follows from Theorem \ref{p.root.Schatten}.
\end{proof}

\section{A non-coercive example}
\label{s.4a}

To the best of our knowledge the completeness of root functions has been studied for elliptic
boundary value problems, i.e., for those satisfying the Shapiro-Lopatinskii condition.
In this section we consider an example where the Shapiro-Lopatinskii condition is violated.

Let the complex structure in $\C^n \equiv \mathbb{R}^{2n}$ be given by
   $z_j = x_j + \sqrt{-1} x_{n+j}$
with $j = 1, \ldots, n$ and $\bar{\partial}$ stand for the Cauchy-Riemann operator corresponding
to this structure, i.e., the column of $n$ complex derivatives
$$
   \frac{\partial}{\partial \bar{z}_j}
 = \frac{1}{2}
   \Big( \frac{\partial}{\partial x_j} + \sqrt{-1} \frac{\partial}{\partial x_{n+j}}
   \Big).
$$

The formal adjoint $\bar{\partial}^\ast$ of
                   $\bar{\partial}$
with respect to the usual Hermitian structure in the space $L^2 (\C^n)$ is the line of $n$ operators
$$
 - \frac{1}{2}
   \Big( \frac{\partial}{\partial x_j} - \sqrt{-1} \frac{\partial}{\partial x_{n+j}}
   \Big)
 =:
 - \frac{\partial}{\partial z_j}.
$$
An easy computation shows that $\bar{\partial}^\ast \bar{\partial}$ just amounts to the $- 1/4$
multiple of the Laplace operator
$$
   \iD_{2n}
 = \sum_{j=1}^{2n} \left( \frac{\partial}{\partial x_j} \right)^2
$$
in $\mathbb{R}^{2n}$.

As $A$ we take
$$
   A = - \iD_{2n} + \sum_{j=1}^n a_j (z) \frac{\partial}{\partial \bar{z}_j} + a_0 (z),
$$
where
   $a_1 (z), \ldots, a_n (z)$ and
   $a_0 (z)$
are bounded functions in $\cD$,
   $\cD$ being a bounded domain with Lipschitz boundary in $\C^n$.
The matrix
$$
   \left( a_{i,j} \right)_{\substack{i = 1, \ldots, 2n \\
                                     j = 1, \ldots, 2n}}
$$
is chosen to be
$$
   \Big( \begin{array}{rr}
                     E_n & \sqrt{-1} E_n
\\
         - \sqrt{-1} E_n & E_n
         \end{array}
   \Big)
$$
where $E_n$ is the unity $(n \times n)\,$-matrix.
Obviously, $\overline{a_{i,j}} = a_{j,i}$ for all $i, j = 1, \ldots, 2n$ and the corresponding conormal
derivative is
$$
   \partial_c
 =
   2\, \sum_{j=1}^n (\nu_j \! - \! \sqrt{-1} \nu_{n+j})\, \frac{\partial}{\partial \bar{z}_j}
 =
   \frac{\partial}{\partial \nu}
 + \sqrt{-1}\,
   \sum_{j=1}^n
   \Big( \nu_j \frac{\partial}{\partial x_{n+j}} \! - \! \nu_{n+j} \frac{\partial}{\partial x_{j}} \Big),
$$
which is known as complex normal derivative at the boundary of $\cD$.

Consider the following boundary value problem.
Given a distribution $f$ in $\cD$, find a distribution $u$ in $\cD$ satisfying
\begin{equation}
\label{pr.SL.CR}
\left\{ \begin{array}{rclcl}
   \displaystyle
   \Big( - \iD_{2n} + \sum_{j=1}^n a_j (z) \frac{\partial}{\partial \bar{z}_j} + a_0 (z) \Big) u
 & =
 & f
 & \mbox{in}
 & \cD,
\\
   \partial_c u + b_0 (z) u
 & =
 & 0
 & \mbox{on}
 & \partial \cD.
\end{array}
\right.
\end{equation}

In this case $S$ is empty and
             $b_1 = 1$.
Set $a_{0,0} (z) \equiv 1$ in $\cD$ and
    $b_{0,0}$ to be any positive constant function on the boundary.
Then the corresponding Hermitian form $(\cdot, \cdot)_{\SL}$ is
$$
   (u,v)_{\SL}
 =
   (u,v)_{L^2 (\cD)}
 + (2 \bar{\partial} u,2 \bar{\partial} v)_{L^2 (\cD)}
 + b_{0,0}\, (u,v)_{L^2 (\partial \cD)}
$$
and the space $H_{\SL} (\cD)$ is defined to be the completion of $H^1 (\cD)$ with respect
to the norm
$
   \| u \|_{\SL} = \sqrt{(u,u)_{\SL}}.
$

\begin{lem}
\label{l.SL.CR}
The inclusion
$$
   \iota : H_{\SL} (\cD) \to L^2 (\cD)
$$
is continuous and compact.
More exactly, there are continuous embeddings
\begin{equation}
\label{eq.noncoerc.emb1}
\begin{array}{rcccl}
   H^1 (\cD)
 & \hookrightarrow
 & H_{\SL} (\cD)
 & \hookrightarrow
 & H^{1/2} (\cD),
\\
   H^{-1/2} (\cD)
 & \hookrightarrow
 & H_{\SL}^- (\cD)
 & \hookrightarrow
 & H^{-1} (\cD).
\end{array}
\end{equation}
\end{lem}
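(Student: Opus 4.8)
The plan is to reduce the whole statement to the two continuous embeddings in the top line, $H^1(\cD)\hookrightarrow H_{\SL}(\cD)$ and $H_{\SL}(\cD)\hookrightarrow H^{1/2}(\cD)$; the bottom line then follows by duality, and the compactness of $\iota$ from the Rellich theorem. The first embedding is elementary: for $u\in H^1(\cD)$ the pointwise bound $|2\bar\partial u|^2 = \sum_{j=1}^n|(\partial_{x_j}+\sqrt{-1}\,\partial_{x_{n+j}})u|^2\leq 2\,|\nabla u|^2$ controls the middle term of $\|u\|_{\SL}^2$, while the trace theorem on the Lipschitz domain $\cD$ gives $\|u\|_{L^2(\partial\cD)}\leq c\,\|u\|_{H^1(\cD)}$, whence $\|u\|_{\SL}\leq c\,\|u\|_{H^1(\cD)}$. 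Since $\bar\partial$ is an overdetermined \emph{elliptic} first order operator (its symbol $(\xi_j+\sqrt{-1}\,\xi_{n+j})_{j}$ vanishes only at $\xi=0$), interior elliptic regularity already yields $u\in H^1_{\mathrm{loc}}(\cD)$ with interior bounds in terms of $\|u\|_{\SL}$, so the entire difficulty of the second embedding is concentrated at $\partial\cD$.

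The analytic core is the following Green identity, obtained from the pointwise divergence relation $-2\sqrt{-1}\,\operatorname{Im}(\partial_{x_{n+j}}u\,\overline{\partial_{x_j}u}) = \partial_{x_j}(u\,\overline{\partial_{x_{n+j}}u}) - \partial_{x_{n+j}}(u\,\overline{\partial_{x_j}u})$, in which the second order terms cancel, after summation over $j$ and the divergence theorem:
\begin{equation*}
   \|2\bar\partial u\|_{L^2(\cD)}^2
 = \|\nabla u\|_{L^2(\cD)}^2
 - \sqrt{-1}\int_{\partial\cD} u\,\overline{\partial_\tau u}\,ds,
\end{equation*}
where $\partial_\tau = \sum_{j=1}^n(\nu_j\partial_{x_{n+j}} - \nu_{n+j}\partial_{x_j})$ is precisely the tangential field occurring in the imaginary part of the complex normal derivative, $\partial_c = \partial_\nu + \sqrt{-1}\,\partial_\tau$. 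Thus $\|u\|_{\SL}^2 = \|u\|_{H^1(\cD)}^2 - \sqrt{-1}\int_{\partial\cD}u\,\overline{\partial_\tau u}\,ds + b_{0,0}\|u\|_{L^2(\partial\cD)}^2$: the energy differs from the full $H^1$ energy by a \emph{first order}, sign-indefinite boundary form, which is exactly the mechanism by which coercivity (the Shapiro--Lopatinskii condition) fails and at most half a derivative can be gained.

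To prove $\|u\|_{H^{1/2}(\cD)}\leq c\,\|u\|_{\SL}$ I would localise by a partition of unity, flatten a boundary chart so that $\partial\cD$ becomes $\{t=0\}$ with $t$ the normal variable, and apply the partial Fourier transform in the tangential variables, writing $\hat u(\eta,t)$ and $\eta_\ast$ for the frequency dual to the $\partial_\tau$-direction. For each fixed $\eta$ the form reduces to the one-dimensional expression $F_\eta = \int_0^\infty(\langle\eta\rangle^2|\hat u|^2 + |\partial_t\hat u|^2)\,dt + (b_{0,0}-\eta_\ast)|\hat u(\eta,0)|^2$, and the goal is the lower bound $F_\eta\geq c\,\langle\eta\rangle\int_0^\infty|\hat u(\eta,t)|^2\,dt$, i.e.\ a uniform gain of one tangential half-derivative, which together with interior ellipticity and the $L^2$ control assembles into the global $H^{1/2}(\cD)$ estimate. \textbf{This one-dimensional estimate is the main obstacle}, and the difficulty is genuinely directional: when $\eta$ points along $+\partial_\tau$ one has $\eta_\ast$ close to $|\eta|$, the extremal profile $\hat u(\eta,t)\approx\hat u(\eta,0)\,e^{-|\eta|t}$ makes the first order boundary term $-\eta_\ast|\hat u(\eta,0)|^2$ nearly cancel the whole Dirichlet energy (through the sharp trace inequality $|\eta|\,|\hat u(\eta,0)|^2\leq\int_0^\infty(|\eta|^2|\hat u|^2+|\partial_t\hat u|^2)\,dt$), and only the \emph{positive zeroth order} term $b_{0,0}\,|\hat u(\eta,0)|^2$ — which for such boundary-concentrated profiles dominates $\langle\eta\rangle\int_0^\infty|\hat u|^2\,dt$ — keeps the estimate alive. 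The bookkeeping is handled by splitting into the cone $\eta_\ast\leq\tfrac12|\eta|$, where a fixed fraction of the Dirichlet energy survives and even an $H^1$ bound holds, and its complement, where one tests with the convex-combination weight $\theta = 1 - b_{0,0}/|\eta|$ and uses $b_{0,0}>0$ to absorb the boundary term; the positivity of $b_{0,0}$ (here a positive constant) is thus indispensable.

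Finally, the bottom row and the compactness follow formally. Since $H^1(\cD)$ is dense in $H_{\SL}(\cD)$ by definition and $H_{\SL}(\cD)$ is in turn dense in $H^{1/2}(\cD)$, dualising the two continuous dense embeddings with respect to the $L^2(\cD)$ pairing — identifying $H_{\SL}(\cD)' = H^-_{\SL}(\cD)$ by \lemref{l.dual} and $H^{-s}(\cD) = H^s(\cD)'$ as discussed in \secref{s.4} — reverses each arrow and yields $H^{-1/2}(\cD)\hookrightarrow H^-_{\SL}(\cD)\hookrightarrow H^{-1}(\cD)$. The embedding $\iota:H_{\SL}(\cD)\to L^2(\cD)$ factors as $H_{\SL}(\cD)\hookrightarrow H^{1/2}(\cD)\hookrightarrow L^2(\cD)$, and since the second arrow is compact by the Rellich--Kondrachov theorem on the Lipschitz domain $\cD$, the operator $\iota$ is compact, completing the proof.
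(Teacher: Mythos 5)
Your preliminary steps are sound: the embedding $H^1 (\cD) \hookrightarrow H_{\SL} (\cD)$, the duality argument for the second line of (\ref{eq.noncoerc.emb1}), the factorization of $\iota$ through $H^{1/2} (\cD)$ for compactness, the Green identity $\| 2 \bar{\partial} u \|_{L^2 (\cD)}^2 = \| \nabla u \|_{L^2 (\cD)}^2 - \sqrt{-1} \int_{\partial \cD} u\, \overline{\partial_\tau u}\, ds$, and even the one-dimensional model estimate, which indeed holds: your $F_\eta$ is the Rayleigh quotient of the Robin operator $- d^2/dt^2 + \langle \eta \rangle^2$ on $(0,\infty)$ with boundary condition $\hat{u}' (0) = (b_{0,0} - \eta_\ast) \hat{u} (0)$, whose lowest spectral point equals $\langle \eta \rangle^2$ when $\eta_\ast \leq b_{0,0}$ and equals $\langle \eta \rangle^2 - (\eta_\ast - b_{0,0})^2 \geq 1 + b_{0,0} |\eta|$ when $\eta_\ast > b_{0,0}$, so that $F_\eta \geq c\, \min (1, b_{0,0})\, \langle \eta \rangle\, \| \hat{u} \|_{L^2 (0,\infty)}^2$ uniformly in $\eta$.

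The genuine gap is the step you dispatch in one sentence: ``localise by a partition of unity, flatten a boundary chart \dots and apply the partial Fourier transform.'' That reduction is exactly where the difficulty of the lemma sits, and as described it fails. Flattening a curved chart, and freezing the field $\tau$, which varies from point to point (otherwise $\eta_\ast$ is not a Fourier variable but a variable-coefficient symbol), perturbs the quadratic form by terms of size $O (\epsilon) \| \nabla u \|_{L^2 (\cD)}^2$, and the cutoff commutators are of the same strength. In a coercive problem such errors are absorbed into the Dirichlet integral; here they cannot be, because $\| \nabla u \|_{L^2 (\cD)}$ is precisely what $\| \cdot \|_{\SL}$ does not dominate: by your own identity the Dirichlet energy and the first order boundary term nearly cancel, $H_{\SL} (\cD)$ is strictly larger than $H^1 (\cD)$, and by Remark \ref{r.regularity.CR} nothing better than a half-derivative gain can hold. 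So the localization errors are of full first order while the estimate being proved controls only half an order, and no absorption scheme closes this; a smaller point is that $\cD$ is only Lipschitz here, so there is no smooth flattening to begin with. The paper avoids localization altogether: it decomposes $u$ globally, bounding the component orthogonal to the holomorphic functions in $H^{1/2} (\cD)$ by $\| \bar{\partial} u \|_{L^2 (\cD)}$ (Kohn's estimate on pseudoconvex domains), and the holomorphic --- hence harmonic --- component in $H^{1/2} (\cD)$ by its $L^2 (\partial \cD)$ trace (Straube's theorem, i.e.\ the $H^s$ theory of the Dirichlet problem). These global results are the substitute for the microlocal reduction your argument leaves unjustified; repairing your proof means either invoking them, or carrying out a genuine boundary pseudodifferential argument (for smooth boundary) comparing the form with the Dirichlet-to-Neumann operator --- neither of which is mere bookkeeping.
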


\begin{proof}
The lemma is a consequence of results on the Dirichlet problem for the Laplace operator
in the scale of spaces $H^s (\cD)$, where $s \in \mathbb{R}$.

For strongly pseudoconvex domains $\cD \subset \C^n$ the proof can be given within the
framework of complex analysis.
Indeed, it follows from \cite{Stra84} that
$$
   \| u \|_{L^2 (\partial \cD)} \geq c\, \| u \|_{H^{1/2} (\cD)}
$$
for all harmonic functions in $\cD$ of class $H^{1/2} (\cD)$, with $c$ a constant independent
of $u$.
If $\cD$ is pseudoconvex then there is a constant $c > 0$, such that
$$
   \| \bar{\partial} u \|_{L^2 (\cD)} \geq c\, \| u \|_{H^{1/2} (\cD)}
$$
for all $u \in L^2 (\cD)$ orthogonal to the subspace of $L^2 (\cD)$ consisting of holomorphic
functions in $\cD$ (see \cite{Kohn79}).
Since any holomorphic function is harmonic we conclude readily that there are continuous
embeddings of the first line in (\ref{eq.noncoerc.emb1}).
The second line of embeddings follows by familiar duality arguments, thus showing the lemma.
\end{proof}

\begin{rem}
\label{r.regularity.CR}
There is no continuous embedding
$
   H_{\SL} (\cD) \hookrightarrow H^{1/2+\varepsilon} (\cD)
$
with any $\varepsilon > 0$.
This means that the Shapiro-Lopatinskii condition fails to hold for problem (\ref{pr.SL.CR}).
\end{rem}

As $t = 0$ and
   $b_{1} \equiv 1$,
we deduce that estimate (\ref{eq.positive.part1}) is valid, provided that the functions $a_j$ and
                                                                                        $a_0$
are of class $L^\infty (\cD)$ and $b_0 \in L^\infty (\partial \cD)$.
The operator $L_{0}$ corresponds to
$$
\left\{ \begin{array}{rclcl}
   - \iD_{2n} u + u
 & =
 & f
 & \mbox{in}
 & \cD,
\\
   \partial_c u + b_{0,0}\, u
 & =
 & 0
 & \mbox{on}
 & \partial \cD.
\end{array}
\right.
$$

\begin{thm}
\label{p.L0.CR.selfadj}
The inverse $L^{-1}_0$ of the operator $L_0$ induces compact positive self-adjoint operators
$$
\begin{array}{rclcrcl}
   Q_{1}
 & =
 & \iota' \iota\, L^{-1}_{0}
 & :
 & H^-_{\SL} (\cD)
 & \to
 & H^-_{\SL} (\cD),
\\
   Q_{2}
 & =
 & \iota\, L^{-1}_{0}\, \iota'
 & :
 & L^2 (\cD)
 & \to
 & L^2 (\cD),
\\
   Q_{3}
 & =
 & L^{-1}_{0}\, \iota' \iota
 & :
 & H_{\SL} (\cD)
 & \to
 & H_{\SL} (\cD)
\end{array}
$$
which have the same systems of eigenvalues and eigenvectors.
Moreover, all eigenvalues are positive and there are orthonormal bases in
   $H_{\SL} (\cD)$,
   $L^2 (\cD)$ and
   $H^-_{\SL} (\cD)$
consisting of the eigenvectors.
\end{thm}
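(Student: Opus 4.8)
The plan is to observe that \thmref{p.L0.CR.selfadj} is nothing but the specialization of Lemma~\ref{l.L0.selfadj} to the Cauchy-Riemann example of this section. Since the conclusions of the two statements coincide word for word, it suffices to check that the two standing hypotheses of Lemma~\ref{l.L0.selfadj} are met here, namely the continuous-embedding estimate (\ref{eq.incl.est}) and the compactness of the inclusion (\ref{eq.incl}) $H_{\SL} (\cD) \hookrightarrow L^2 (\cD)$.

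First I would invoke Lemma~\ref{l.SL.CR}, which asserts exactly that the inclusion $\iota : H_{\SL} (\cD) \to L^2 (\cD)$ is continuous and compact. Its continuity yields estimate (\ref{eq.incl.est}) with a suitable constant $c$, while its compactness is precisely the compactness of (\ref{eq.incl}). One also has to identify $L_0$: here $S$ is empty, $b_1 \equiv 1$, $a_{0,0} \equiv 1$ and $b_{0,0}$ is a positive constant, so the sesquilinear form $(\cdot,\cdot)_{\SL}$ written out just before the theorem is a genuine scalar product on $H_{\SL} (\cD)$, and the operator defined by (\ref{eq.L0}) is the one associated with the reduced problem $-\iD_{2n} u + u = f$ in $\cD$, $\partial_c u + b_{0,0} u = 0$ on $\partial \cD$. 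By Lemma~\ref{l.solv.SL1} this $L_0$ is continuously invertible with $\| L_0 \| = \| L_0^{-1} \| = 1$, so $L_0^{-1}$ exists and is injective.

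With both hypotheses of Lemma~\ref{l.L0.selfadj} in force, the conclusion transfers verbatim: the three operators $Q_1 = \iota' \iota\, L_0^{-1}$, $Q_2 = \iota\, L_0^{-1}\, \iota'$ and $Q_3 = L_0^{-1}\, \iota' \iota$ are compact (being compositions of the compact $\iota$ with bounded operators), positive and self-adjoint with respect to the three respective scalar products, and they share the same eigenvalues and eigenvectors. Injectivity of $L_0^{-1}$ forces the $Q_j$ to be injective, whence all their eigenvalues are strictly positive; the orthonormal eigenbases in $H_{\SL} (\cD)$, $L^2 (\cD)$ and $H_{\SL}^- (\cD)$ are then furnished by the spectral theorem (\thmref{t.spectral}).

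The real content lies not in this deduction but in the verification of compactness, and that is the step I expect to be the crux. Because the Shapiro-Lopatinskii condition fails for problem (\ref{pr.SL.CR}) (Remark~\ref{r.regularity.CR}), one cannot fall back on the usual elliptic a-priori estimate $H_{\SL} (\cD) \hookrightarrow H^1 (\cD)$ to obtain Rellich compactness. Instead the compact embedding must be extracted from the complex-analytic estimates underlying Lemma~\ref{l.SL.CR} --- the boundary bound $\| u \|_{L^2 (\partial \cD)} \geq c\, \| u \|_{H^{1/2} (\cD)}$ for harmonic functions together with the subelliptic $\bar{\partial}$-estimate on pseudoconvex domains --- which yield the refined chain (\ref{eq.noncoerc.emb1}) and hence the compact factorization $H_{\SL} (\cD) \hookrightarrow H^{1/2} (\cD) \hookrightarrow L^2 (\cD)$. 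Once that embedding is granted, the remainder is the routine application of the two earlier lemmas described above.
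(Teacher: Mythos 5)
Your proposal is correct and coincides with the paper's own argument, which proves the theorem simply by combining Lemma~\ref{l.L0.selfadj} with Lemma~\ref{l.SL.CR}: the latter supplies both the continuity (hence estimate (\ref{eq.incl.est})) and the compactness of the inclusion (\ref{eq.incl}), and the former then yields all the stated conclusions. Your additional remarks identifying $L_0$ via Lemma~\ref{l.solv.SL1} and locating the real work in the complex-analytic embeddings of Lemma~\ref{l.SL.CR} are accurate but not needed beyond what the paper records.
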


\begin{proof}
For the proof it suffices to combine Lemmas \ref{l.L0.selfadj} and
                                            \ref{l.SL.CR}.
\end{proof}

Corollary \ref{c.order.SL0} actually shows that the operators $Q_{1}$,
                                                              $Q_{2}$ and
                                                              $Q_{3}$
are of Schatten class $\gS_{n + \varepsilon}$ for any $\varepsilon > 0$, and  so their orders
are finite.

\begin{thm}
\label{t.root.func.CR}
Let $t = 0$ and
    $\delta b_0 = 0$.
Then operator
$
   L : H_{\SL} (\cD) \to H^-_{\SL} (\cD)
$
related to problem (\ref{pr.SL.CR}) is Fredholm.
Moreover, the system of root functions of the corresponding closed operator $T$ is complete
in the spaces $H^-_{\SL} (\cD)$,
              $L^2 (\cD)$ and
              $H_{\SL} (\cD)$.
For any $\varepsilon > 0$, all eigenvalues of $T$ (except for a finite number) belong to the corner
   $|\arg \lambda| < \varepsilon$.
\end{thm}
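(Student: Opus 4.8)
The plan is to recognize Theorem~\ref{t.root.func.CR} as a direct instance of the abstract weak-perturbation result Theorem~\ref{t.root.func.1}. All three of its conclusions --- the Fredholm property of $L$, the completeness of the root functions of $T$ in the three spaces, and the localization of all but finitely many eigenvalues in the corner $|\arg \lambda| < \varepsilon$ --- are exactly what Theorem~\ref{t.root.func.1} delivers, once its two hypotheses are checked in the present situation: that the self-adjoint comparison operator $Q_1$ is of finite order, and that the perturbation $\delta L : H_{\SL} (\cD) \to H^-_{\SL} (\cD)$ is compact. So the whole argument reduces to verifying these two facts for problem~(\ref{pr.SL.CR}).

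The finite order of $Q_1$ is essentially already in hand. By Lemma~\ref{l.SL.CR} the space $H_{\SL} (\cD)$ embeds continuously into $H^{1/2} (\cD)$, so hypothesis~(\ref{eq.emb.s}) holds with $s = 1/2$; Corollary~\ref{c.order.SL0} then places $Q_1$ in a Schatten class, whence its order is finite. This is precisely the remark recorded immediately after Theorem~\ref{p.L0.CR.selfadj}.

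The substantive step is the compactness of $\delta L$. Under the standing choices $t = 0$, $\delta b_0 = 0$, $S = \emptyset$, $b_1 \equiv 1$ and $a_{0,0} \equiv 1$, the perturbation form carries no boundary contribution and reduces to the interior expression
\[
   \delta Q (u,v)
 = \Big( \sum_{j=1}^n a_j\, \partial_{\bar z_j} u + (a_0 - 1)\, u,\ v \Big)_{L^2 (\cD)}
 =: (D u, v)_{L^2 (\cD)} .
\]
The crucial observation is that the $\SL$-norm controls $\bar\partial u$ in $L^2 (\cD)$: directly from the definition of $\| \cdot \|_{\SL}$ in this example one has $\| 2 \bar\partial u \|_{L^2 (\cD)} \le \| u \|_{\SL}$ as well as $\| u \|_{L^2 (\cD)} \le \| u \|_{\SL}$. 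Since $a_j, a_0 \in L^\infty (\cD)$, the map $D : u \mapsto \sum_j a_j\, \partial_{\bar z_j} u + (a_0 - 1) u$ is therefore bounded from $H_{\SL} (\cD)$ into $L^2 (\cD)$. From $(v, \delta L u) = \overline{\delta Q (u,v)} = (v, D u)_{L^2 (\cD)} = (v, \iota' (D u))$ for all $v \in H_{\SL} (\cD)$ I read off the factorization $\delta L = \iota' \circ D$. Here $\iota' : L^2 (\cD) \to H^-_{\SL} (\cD)$ is compact, because the inclusion $\iota$ is compact by Lemma~\ref{l.SL.CR} and so the second part of Lemma~\ref{l.dual.emb} applies. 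A bounded operator followed by a compact one is compact, so $\delta L$ is compact.

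With both hypotheses in place, I would conclude as follows. Since $L_0$ is continuously invertible (Lemma~\ref{l.solv.SL1}) and $\delta L$ is compact, $L = L_0 + \delta L$ is Fredholm, which is the first assertion. Theorem~\ref{t.root.func.1} then gives at once that the spectrum of the closed operator $T$ differs from $\C$, that its root functions are complete in $H^-_{\SL} (\cD)$, $L^2 (\cD)$ and $H_{\SL} (\cD)$, and that for each $\varepsilon > 0$ all but finitely many eigenvalues of $T$ lie in the corner $|\arg \lambda| < \varepsilon$. The main obstacle, and the only place where the specific structure of~(\ref{pr.SL.CR}) enters, is the compactness of $\delta L$: it hinges on the fact that the first-order term $\sum_j a_j\, \partial_{\bar z_j} u$, although it involves a derivative, already lands in $L^2 (\cD)$ because $\bar\partial u$ is built into the graph norm, so that $\delta L$ factors through the compact embedding $\iota'$ and no regularity is lost.
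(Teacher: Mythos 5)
Your proposal is correct and follows essentially the same route as the paper: the paper's proof likewise observes that $\delta L = L - L_0$ maps $H_{\SL}(\cD)$ continuously into $L^2(\cD)$, hence is compact by composition with the compact embedding $\iota'$, and then invokes Theorem~\ref{t.root.func.1}. You merely spell out details the paper leaves implicit --- the explicit factorization $\delta L = \iota' \circ D$ using the fact that $\bar{\partial} u$ is controlled by the $\SL$-norm, and the finite-order check for $Q_1$ via Lemma~\ref{l.SL.CR} and Corollary~\ref{c.order.SL0} --- both of which are accurate.
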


\begin{proof}
Indeed, the operator
$$
   \delta L = L - L_0 :\, H_{\SL} (\cD) \to H^-_{\SL} (\cD)
$$
maps $H_{\SL} (\cD)$ continuously to
     $L^2 (\cD)$,
and hence it is compact.
Thus, the statement follows from Theorem \ref{t.root.func.1}.
\end{proof}

\begin{cor}
\label{c.root.func.CR}
Let $t = 0$ and
    $\delta b_0 = 0$.
Assume that the constant $c$ of estimate (\ref{eq.positive.part1}) is less than $1$.
Then the operator
$
   L : H_{\SL} (\cD) \to H^-_{\SL} (\cD)
$
corresponding to problem (\ref{pr.SL.CR}) is continuously invertible.
Moreover, the system of root functions of the compact operator $\iota' \iota\, L^{-1}$ in
$H^-_{\SL} (\cD)$ is complete in the spaces
   $H^-_{\SL} (\cD)$,
   $L^2 (\cD)$ and
   $H_{\SL} (\cD)$.
For any $\varepsilon > 0$, all eigenvalues of $\iota' \iota\, L^{-1}$ (except for a finite
number) belong to the corner $|\arg \lambda| < \varepsilon$.
\end{cor}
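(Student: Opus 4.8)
The plan is to assemble the weak perturbation machinery developed above, using the hypothesis $c < 1$ to obtain the continuous invertibility of $L$ directly rather than through the rays of minimal growth. First I would dispatch part 1). By Lemma \ref{l.SL.CR} the inclusion $\iota : H_{\SL} (\cD) \to L^2 (\cD)$ is continuous, so estimate (\ref{eq.incl.est}) holds. Since moreover the constant $c$ of (\ref{eq.positive.part1}) is less than $1$ by hypothesis, Lemma \ref{l.solv.SL2} applies verbatim and yields that $L : H_{\SL} (\cD) \to H^-_{\SL} (\cD)$ is continuously invertible. This already identifies the compact operator $\iota' \iota\, L^{-1}$ of the statement with the operator $P_1$ of Theorem \ref{p.root.func.1}.

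Next I would verify the hypotheses of Theorem \ref{p.root.func.1}. By Lemma \ref{l.SL.CR} there is a continuous embedding $H_{\SL} (\cD) \hookrightarrow H^{1/2} (\cD)$, i.e.\ (\ref{eq.emb.s}) holds with $s = 1/2$; hence Corollary \ref{c.order.SL0} gives that $Q_1$ belongs to $\gS_{n + \varepsilon}$ for each $\varepsilon > 0$ and in particular is of finite order. Writing $\delta L = L - L_0$, the hypotheses $t = 0$ and $\delta b_0 = 0$ leave in the defining form of $\delta L$ only the zero and first order terms $\sum_j a_j \partial_j u + \delta a_0\, u$ with $L^\infty$ coefficients, so that $\delta L$ maps $H_{\SL} (\cD)$ continuously into $L^2 (\cD)$. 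Since $\iota$ is compact, Lemma \ref{l.dual.emb} shows that $L^2 (\cD)$ is compactly embedded into $H^-_{\SL} (\cD)$, whence the composition $\delta L : H_{\SL} (\cD) \to H^-_{\SL} (\cD)$ is compact. With $L = L_0 + \delta L$ invertible, $\delta L$ compact and $Q_1$ of finite order, Theorem \ref{p.root.func.1} applies and proves the completeness of the root functions of $\iota' \iota\, L^{-1}$ in each of the spaces $H^-_{\SL} (\cD)$, $L^2 (\cD)$ and $H_{\SL} (\cD)$, which is part 2).

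For the eigenvalue localisation of part 3), I would return to formula (\ref{eq.perturbation}), which exhibits $\iota' \iota\, L^{-1}$ as a weak perturbation $Q_1 (\Id + \delta A)$ of the compact self-adjoint operator $Q_1$, with $\delta A = - \delta L\, (L_0 + \delta L)^{-1}$ compact. The decisive point is that, by Theorem \ref{p.L0.CR.selfadj}, all eigenvalues of $Q_1$ are positive, so $Q_1$ has no negative eigenvalues at all. Thus alternative 1) of Theorem \ref{t.keldysh} guarantees that at most finitely many eigenvalues of $\iota' \iota\, L^{-1}$ fall in the angle $|\arg \lambda - \pi| < \varepsilon$; combined with the main conclusion of Theorem \ref{t.keldysh}, according to which all but finitely many eigenvalues lie in the union of the angles $|\arg \lambda| < \varepsilon$ and $|\arg \lambda - \pi| < \varepsilon$, this forces all but finitely many eigenvalues into the corner $|\arg \lambda| < \varepsilon$.

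The bulk of the argument is bookkeeping, since every analytic ingredient has already been prepared; the only genuine verifications are that $\delta L$ lands in $L^2 (\cD)$ (hence is compact into $H^-_{\SL} (\cD)$) and that the sign structure of $Q_1$ is what confines the eigenvalues to a single corner. I expect the eigenvalue localisation to be the most delicate step, as it hinges on correctly reading off $P_1 = Q_1 (\Id + \delta A)$ from (\ref{eq.perturbation}) and on invoking the refined alternative of the Keldysh theorem rather than its symmetric general statement.
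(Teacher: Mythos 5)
Your proposal is correct and follows essentially the same route as the paper: the corollary is stated there without a separate proof precisely because it is assembled the way you do it — Lemma \ref{l.solv.SL2} (with the hypothesis $c<1$) for the invertibility of $L$, compactness of $\delta L = L - L_0$ through $L^2 (\cD)$ exactly as in the proof of Theorem \ref{t.root.func.CR}, the embedding $H_{\SL} (\cD) \hookrightarrow H^{1/2} (\cD)$ of Lemma \ref{l.SL.CR} plus Corollary \ref{c.order.SL0} for the finite order of $Q_1$, Theorem \ref{p.root.func.1} for completeness, and Keldysh's alternative 1) together with the positivity of $Q_1$ (Theorem \ref{p.L0.CR.selfadj}) for confining all but finitely many eigenvalues to the corner $|\arg \lambda| < \varepsilon$. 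The only cosmetic slip is writing the first-order perturbation as $\sum_j a_j \partial_j u$: in problem (\ref{pr.SL.CR}) it is $\sum_j a_j\, \partial u / \partial \bar{z}_j$, and it is exactly this $\bar{\partial}$-structure that lets the $\| \cdot \|_{\SL}$-norm control it, so that $\delta L$ indeed lands in $L^2 (\cD)$.
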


Recall that estimate (\ref{eq.positive.part1}) in the particular case under considerations
becomes explicitly
$$
   \Big|
   \Big( \sum_{j=1}^n a_j (z) \frac{\partial u}{\partial \bar{z}_j} + (a_0-1)\, u, v
   \Big)_{L^2 (\cD)} \Big|
 \leq
   c\, \| u \|_{\SL} \| v \|_{\SL}
$$
for all $u, v \in  H^1 (\cD)$, with $c$ a constant independent of $u$ and $v$.

\begin{cor}
\label{c.root.func.CR2}
If
   $t = 0$
and
   the constant $c$ of estimate (\ref{eq.positive.part1}) is less than $|\sin (\pi / 2n)|$, 
then the closed operator $T$ in $H^-_{\SL} (\cD)$ corresponding to problem (\ref{pr.SL.CR}) is continuously invertible.
Moreover, the system of root functions of $T$ is complete in the spaces $H^-_{\SL} (\cD)$,
                                                                        $L^2 (\cD)$ and
                                                                        $H_{\SL} (\cD)$,
and all eigenvalues of $T$ (except for a finite number) lie in the corner
   $|\arg \lambda| < \arcsin c$.
\end{cor}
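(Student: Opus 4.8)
The plan is to recognize Corollary~\ref{c.root.func.CR2} as a direct specialization of Theorem~\ref{t.root.func.Schatten} to the Cauchy--Riemann type problem (\ref{pr.SL.CR}) with $t = 0$. Accordingly I would verify the three hypotheses of that theorem for the present $L = L_0 + \delta L$ and then read off all the assertions of the corollary, supplementing them with the continuous invertibility of $T$. The hypotheses to check are: a continuous embedding $H_{\SL} (\cD) \hookrightarrow H^s (\cD)$ with some $s > 0$; the compactness of $\iota$ (needed already so that $Q_1$ is compact of finite order); and the perturbation estimate (\ref{eq.positive.part1}) with a constant $c < |\sin (\pi s / n)|$.

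First I would invoke Lemma~\ref{l.SL.CR}, which furnishes at once the compactness of $\iota : H_{\SL} (\cD) \to L^2 (\cD)$ and the continuous embedding $H_{\SL} (\cD) \hookrightarrow H^{1/2} (\cD)$, so that one may take $s = 1/2$. With this value the admissibility condition $c < |\sin (\pi s / n)|$ of Theorem~\ref{t.root.func.Schatten} becomes exactly $c < |\sin (\pi / 2 n)|$, which is precisely the hypothesis of the corollary. Next, since $t = 0$ here, estimate (\ref{eq.positive.part1}) takes the explicit form displayed just before the corollary; because the coefficients $a_1, \ldots, a_n$ and $a_0$ lie in $L^\infty (\cD)$, this sesquilinear form is bounded on $H_{\SL} (\cD) \times H_{\SL} (\cD)$, and its controlling constant is the constant $c$ of the hypothesis, that is $\| \delta L \|_{\mathcal{L} (H_{\SL} (\cD), H^-_{\SL} (\cD))} \leq c$. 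At this point Theorem~\ref{t.root.func.Schatten} applies verbatim and yields the discreteness of the spectrum of $T$, the localization of its eigenvalues in the corner $|\arg \lambda| \leq \arcsin c$, the fact that every ray $\arg \lambda = \vartheta$ with $|\vartheta| > \arcsin c$ is a ray of minimal growth, and the completeness of the system of root functions in $H^-_{\SL} (\cD)$, $L^2 (\cD)$ and $H_{\SL} (\cD)$.

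It remains to record the continuous invertibility of $T$ and to refine the eigenvalue localization to the open corner with finitely many exceptions. For the former I would note that $\| \delta L \|_{\mathcal{L} (H_{\SL} (\cD), H^-_{\SL} (\cD))} \leq c < |\sin (\pi / 2 n)| \leq 1 = \| L_0^{-1} \|^{-1}$, so Lemma~\ref{l.solv.SL2} shows $L = L_0 + \delta L$ to be continuously invertible; since $T - 0 \cdot \Id = L$ on $H_{\SL} (\cD)$ by (\ref{eq.T-mu}), the origin is a resolvent point and $T$ is continuously invertible. The open-corner statement for all but finitely many eigenvalues follows from the ray-of-minimal-growth structure exactly as in Theorem~\ref{t.root.func.Schatten.Comp}. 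I expect no genuinely new estimate to be required: the one delicate input is Lemma~\ref{l.SL.CR}, whose proof rests on subelliptic estimates for $\bar\partial$ from complex analysis, and the main point of care is simply that the exponent $s = 1/2$ supplied there is sharp (it cannot be improved, by Remark~\ref{r.regularity.CR}, since the Shapiro--Lopatinskii condition fails for (\ref{pr.SL.CR})), for it is this value that pins down the bound $|\sin (\pi / 2 n)|$.
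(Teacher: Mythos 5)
Your route is the intended one---specialize Theorem~\ref{t.root.func.Schatten} to problem (\ref{pr.SL.CR}) via the embedding $H_{\SL} (\cD) \hookrightarrow H^{1/2} (\cD)$ of Lemma~\ref{l.SL.CR}, and get the invertibility of $T$ from Lemma~\ref{l.solv.SL2} together with (\ref{eq.T-mu})---but the one step you describe as ``exact'' is precisely where the argument breaks. In Theorem~\ref{t.root.func.Schatten} (and in Theorem~\ref{p.root.Schatten} and Corollary~\ref{c.order.SL0} on which it rests) the letter $n$ denotes the \emph{real} dimension of the Euclidean space containing $\cD$. In Section~\ref{s.4a} the domain lies in $\C^n \equiv \R^{2n}$, so that dimension is $2n$. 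Hence, with $s = 1/2$, the admissibility condition of Theorem~\ref{t.root.func.Schatten} reads $c < |\sin (\pi s / (2n))| = |\sin (\pi / (4n))|$, not $c < |\sin (\pi / (2n))|$: Corollary~\ref{c.order.SL0} places the relevant compact operators only in $\gS_{2n/(2s) + \varepsilon} = \gS_{2n + \varepsilon}$ (one derivative of smoothing, from $H^{-1/2} (\cD)$ to $H^{1/2} (\cD)$, in $2n$ real variables), so the entire function in the proof of Theorem~\ref{p.root.Schatten} has order $2n + \varepsilon$ and the Phragm\'{e}n--Lindel\"{o}f argument only controls corners of opening less than $\pi / (2n)$. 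For a constant $c$ with $\sin (\pi/(4n)) \leq c < \sin (\pi/(2n))$---allowed by the corollary's hypothesis---the gap around the positive real axis between the nearest rays of minimal growth has opening $2 \arcsin c \geq \pi/(2n)$, and the argument fails.

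To be fair, this slip is the paper's as much as yours: the corollary is stated without proof, and immediately after Theorem~\ref{p.L0.CR.selfadj} the paper asserts that $Q_1$, $Q_2$, $Q_3$ lie in $\gS_{n + \varepsilon}$, which is what Corollary~\ref{c.order.SL0} would give if the ambient dimension were $n$; with the actual dimension $2n$ it yields only $\gS_{2n + \varepsilon}$, and the constant $|\sin (\pi / (2n))|$ in Corollary~\ref{c.root.func.CR2} is calibrated to that unjustified exponent. So your proposal faithfully reproduces the intended derivation, including its flaw. To close the gap one must either strengthen the hypothesis to $c < |\sin (\pi / (4n))|$, or establish the sharper Schatten estimate $Q_j \in \gS_{n + \varepsilon}$ by means beyond the embedding of Lemma~\ref{l.SL.CR} (for instance, Weyl-type eigenvalue asymptotics for this $\bar{\partial}$-Neumann-type problem, exploiting interior ellipticity), neither of which appears in the paper or in your argument. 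Your remaining steps---the bound $\| \delta L \|_{\mathcal{L} (H_{\SL} (\cD), H^-_{\SL} (\cD))} \leq c$ from (\ref{eq.positive.part1}), the invertibility of $L = L_0 + \delta L$ and hence of $T$, and the localization of the eigenvalues---are fine.
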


\section{The coercive case}
\label{s.4b}

We now turn to the coercive case, i.e., we assume that estimate (\ref{eq.coercive}) is fulfilled.
This is obviously the case if all the coefficients $a_{i,j} (z)$ of $A$ are real-valued, which
is due to (\ref{eq.ell}).

\begin{lem}
\label{l.SL.coercive.strong}
Suppose estimate (\ref{eq.coercive}) is fulfilled.
Then there are continuous embeddings
$$
\begin{array}{rcl}
   H_{\SL} (\cD)
 & \hookrightarrow
 & H^{1} (\cD,S),
\\
   H^{-1} (\cD)
 & \hookrightarrow
 & H_{\SL}^- (\cD)
\end{array}
$$
if at least one of the following conditions holds:

\bigskip

1) $S$ is not empty.

2) $\displaystyle \int_{\cD} a_{0,0} (x) dx  > 0$.

3) $\displaystyle \int_{\partial \cD \setminus S} \frac{b_{0,0} (x)}{b_1 (x)}\,  ds > 0$.
\end{lem}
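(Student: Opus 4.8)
The plan is to reduce both embeddings to the single norm inequality
$$
   \| u \|_{H^1 (\cD)} \le C\, \| u \|_{\SL}
   \qquad \text{for all } u \in H^1 (\cD,S),
$$
from which the first embedding follows by passing to completions and the second by duality with respect to the $L^2 (\cD)$-pairing. First I would use the assumed coercivity estimate (\ref{eq.coercive}) to control the gradient: integrating (\ref{eq.coercive}) and discarding the two nonnegative lower-order contributions to $\| u \|_{\SL}^2$ (the $a_{0,0}$-term, and the boundary term, which is nonnegative because $b_{0,0}/b_1 \ge 0$) gives $\| \nabla u \|_{L^2 (\cD)}^2 \le m^{-1} \| u \|_{\SL}^2$. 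It then remains to establish a Poincar\'e-type bound $\| u \|_{L^2 (\cD)} \le C\, \| u \|_{\SL}$, after which $\| u \|_{H^1 (\cD)}^2 = \| u \|_{L^2 (\cD)}^2 + \| \nabla u \|_{L^2 (\cD)}^2 \le C'\, \| u \|_{\SL}^2$.

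The Poincar\'e bound I would obtain by a compactness-contradiction argument. Assuming it fails, one can pick $u_k \in H^1 (\cD,S)$ with $\| u_k \|_{L^2 (\cD)} = 1$ and $\| u_k \|_{\SL} \to 0$. By the gradient estimate $\| \nabla u_k \|_{L^2 (\cD)} \to 0$, so $\{ u_k \}$ is bounded in $H^1 (\cD)$; the Rellich--Kondrachov theorem on the Lipschitz domain $\cD$ yields a subsequence converging in $L^2 (\cD)$, and since the gradients tend to zero the sequence is in fact Cauchy in $H^1 (\cD)$ and converges there to some $u$ with $\nabla u = 0$. As $\cD$ is connected, $u \equiv c$ is constant, and $\| u \|_{L^2 (\cD)} = 1$ forces $c \ne 0$. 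Moreover the continuity of the trace operator $H^1 (\cD) \to L^2 (\partial \cD)$ gives $u_k \to c$ in $L^2 (\partial \cD)$, and passing to a further subsequence I may assume $u_k \to c$ almost everywhere on $\cD$ and on $\partial \cD$.

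Each of the three hypotheses now produces a contradiction with $c \ne 0$, and this is the crux of the lemma. If $S \ne \emptyset$, then $u_k|_S = 0$ for every $k$, so the trace convergence gives $c = 0$ on $S$; as $S$ is open in $\partial \cD$ it has positive surface measure, whence $c = 0$. If $\int_{\cD} a_{0,0}\, dx > 0$, then $\int_{\cD} a_{0,0} |u_k|^2\, dx \to 0$, while Fatou's lemma (the integrand being nonnegative) gives $\liminf_k \int_{\cD} a_{0,0} |u_k|^2\, dx \ge |c|^2 \int_{\cD} a_{0,0}\, dx > 0$, a contradiction. If $\int_{\partial \cD \setminus S} (b_{0,0}/b_1)\, ds > 0$, then $\int_{\partial \cD \setminus S} (b_{0,0}/b_1) |u_k|^2\, ds \to 0$, whereas Fatou's lemma applied to the nonnegative weight $b_{0,0}/b_1$ (which is integrable since $b_{0,0}$ is bounded and $1/b_1 \in L^1 (\partial \cD \setminus S)$) yields $\liminf_k \int_{\partial \cD \setminus S} (b_{0,0}/b_1) |u_k|^2\, ds \ge |c|^2 \int_{\partial \cD \setminus S} (b_{0,0}/b_1)\, ds > 0$, again impossible. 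Invoking Fatou rather than dominated convergence is precisely what allows the possibly unbounded boundary weight in the third case to be handled, and this is the only genuinely delicate point.

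Finally I would assemble the embeddings. The estimate $\| u \|_{H^1 (\cD)} \le C\, \| u \|_{\SL}$ on $H^1 (\cD,S)$ shows that every $\SL$-Cauchy sequence is $H^1$-Cauchy, so the $H^1$-limit of a representing sequence coincides with the $L^2 (\cD)$-limit $\iota\, \xi$ and lies in the closed subspace $H^1 (\cD,S)$ of $H^1 (\cD)$; since $\iota$ is already injective, this exhibits $H_{\SL} (\cD)$ as continuously and injectively embedded into $H^1 (\cD,S)$ (and incidentally re-establishes (\ref{eq.incl.est})). For the dual embedding I would argue directly with the $L^2 (\cD)$-pairing: for $u \in H^1 (\cD)$, which is dense in $H^{-1} (\cD)$, and any $v \in H^1 (\cD,S)$ one has $1/\| v \|_{\SL} \le C/\| v \|_{H^1 (\cD)}$, whence
\begin{equation*}
   \| u \|_{H^-_{\SL} (\cD)}
 = \sup_{\substack{v \in H^1 (\cD,S) \\ v \ne 0}}
   \frac{|(v,u)_{L^2 (\cD)}|}{\| v \|_{\SL}}
 \le C \sup_{\substack{w \in H^1 (\cD) \\ w \ne 0}}
   \frac{|(w,u)_{L^2 (\cD)}|}{\| w \|_{H^1 (\cD)}}
 = C\, \| u \|_{H^{-1} (\cD)},
\end{equation*}
the middle inequality using $H^1 (\cD,S) \subset H^1 (\cD)$. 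Extending by density then gives the continuous embedding $H^{-1} (\cD) \hookrightarrow H^-_{\SL} (\cD)$.
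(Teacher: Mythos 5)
Your proof is correct and follows essentially the same compactness-contradiction argument as the paper: both reduce the lemma to the estimate $\| u \|_{H^1 (\cD)} \leq C\, \| u \|_{\SL}$ on $H^1 (\cD,S)$, extract from a hypothetical violating sequence (via Rellich's theorem and the coercivity bound on gradients) a nonzero constant limit function, rule it out under each of the three conditions, and obtain the second embedding by duality. If anything, your write-up is more careful at the two points the paper leaves implicit, namely the use of Fatou's lemma to handle the merely integrable boundary weight $b_{0,0}/b_1$ in case 3) and the explicit estimate behind the phrase ``follows by duality''.
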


In particular, in this case inclusion (\ref{eq.incl}) is compact.

\begin{proof}
The proof is standard, cf. for instance \cite[Ch.~3, \S~5.6]{Mikh76}.
First we prove that there is a constant $c > 0$, such that
$$
   \| u \|_{H^1 (\cD)}
 \leq
   c\, \| u \|_{\SL}
$$
for all $u \in H^1 (\cD,S)$.
We argue by contradiction.
Let for any natural number $k$ there be a function $u_k$ in $H^1 (\cD,S)$ satisfying
$
   \| u_k \|_{H^1 (\cD)} > k\, \| u_k \|_{\SL}.
$
Then $u_k \neq 0$, and so replacing $u_k$ by $u_k / \| u_k \|_{H^1 (\cD)}$, if necessary,
we can assume that
   $\| u_k \|_{H^1 (\cD)} = 1$
and
$
   \| u_k \|_{\SL} < 1 / k
$
for all $k \in \mathbb{N}$.
Thus, the sequence $\{ u_k \}$ converges to zero in $H_{\SL} (\cD)$.
In particular, for each derivative $\{ \partial_j u_k \}$ with $1 \leq j \leq n$, we have
\begin{equation}
\label{eq.1}
   \lim_{k \to \infty} \| \partial_j u_k \|_{L^2 (\cD)} = 0
\end{equation}
because of estimate (\ref{eq.coercive}).

On the other hand, the weak compactness principle shows that on passing to a subsequence,
if necessary, we may assume that the sequence $\{ u_k \}$ converges weakly to an element
$u$ in $H^1 (\cD)$.
As the embedding
   $H^{1} (\cD) \hookrightarrow L^2 (\cD)$
is compact, the sequence $\{ u_k \}$ converges to $u$ in $L^2 (\cD)$.
Then it follows from (\ref{eq.1}) that $\{ u_k \}$ converges to $u$ in  $H^1 (\cD)$ whence
\begin{eqnarray*}
   \lim_{k \to \infty} \| u_k \|_{H^1 (\cD)}
 & = &
   \| u \|_{H^1 (\cD)}
\\
 & = &
   1
\end{eqnarray*}
and the derivatives of the limit function $u$ vanish almost everywhere in $\cD$,
   i.e.,
\begin{equation}
\label{eq.2}
   |u| = \frac{1}{\displaystyle \int_{\cD} dx} > 0.
\end{equation}

If $S$ is not empty, then the traces of $u_k$ on $S$ converge, by the trace theorem, to the
trace of $u$ on $S$.
Since $u_k = 0$ on $S$ for all $k$, we get $u = 0$ which contradicts (\ref{eq.2}).

Furthermore, as $\displaystyle \lim_{k \to \infty} \| u_k \|_{\SL} = 0$, we see that
\begin{eqnarray*}
   |u| \int_{\cD} a_{0,0} (x) dx
 & =
 & 0,
\\
   |u| \int_{\partial \cD \setminus S} \frac{b_{0,0} (x)}{b_1 (x)}\, ds
 & =
 & 0,
\end{eqnarray*}
i.e., (\ref{eq.2}) contradicts either of the conditions 2) and 3).
Therefore, the space $H_{\SL} (\cD)$ is continuously embedded to the space $H^{1} (\cD,S)$.
The second embedding follows by duality.
\end{proof}

\begin{lem}
\label{l.SL.coercive.equiv}
Assume that estimate (\ref{eq.coercive}) is fulfilled.
If
   one of the three conditions of Lemma \ref{l.SL.coercive.equiv} holds true
and
   $b_{0,0} / b_1 \in L^\infty (\partial \cD \setminus S)$,
then the norms $\| \cdot \|_{\SL}$ and
               $\| \cdot \|_{H^1 (\cD)}$
are equivalent and the spaces $H_{\SL} (\cD)$ and
                              $H^1 (\cD,S)$
coincide as topological ones.
\end{lem}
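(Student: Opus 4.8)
The plan is to reduce everything to a two-sided norm estimate on the dense subspace $H^1 (\cD,S)$ and then invoke the uniqueness of completions. One of the two required inequalities is already at our disposal: Lemma \ref{l.SL.coercive.strong} asserts that, under coercivity together with any of the three conditions, there is a constant $c > 0$ with $\| u \|_{H^1 (\cD)} \leq c\, \| u \|_{\SL}$ for all $u \in H^1 (\cD,S)$. Thus the only new content is the opposite bound $\| u \|_{\SL} \leq C\, \| u \|_{H^1 (\cD)}$, and the plan is to obtain it by estimating the three summands of $\| u \|_{\SL}^2$ one by one.

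First I would treat the interior terms. The gradient term is controlled by $\| (a_{i,j}) \|_{L^\infty (\cD)} \sum_{j} \| \partial_j u \|_{L^2 (\cD)}^2$, since the coefficient matrix is Hermitian with entries in $L^\infty (\cD)$, so its quadratic form is dominated pointwise by a constant multiple of $|\nabla u|^2$; the zero-order interior term is at most $\| a_{0,0} \|_{L^\infty (\cD)}\, \| u \|_{L^2 (\cD)}^2$. Both are therefore bounded by a constant multiple of $\| u \|_{H^1 (\cD)}^2$. For the boundary term $\int_{\partial \cD \setminus S} (b_{0,0}/b_1)\, |u|^2\, ds$ I would use the hypothesis $b_{0,0}/b_1 \in L^\infty (\partial \cD \setminus S)$ to factor out the essential supremum, and then apply the trace theorem for Lipschitz domains, $\| u \|_{L^2 (\partial \cD)} \leq C\, \| u \|_{H^1 (\cD)}$, to bound it as well by $C\, \| u \|_{H^1 (\cD)}^2$. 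Summing the three estimates yields the desired inequality with a constant $C$ independent of $u$.

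Finally I would draw the conclusion. The two inequalities together show that $\| \cdot \|_{\SL}$ and $\| \cdot \|_{H^1 (\cD)}$ are equivalent norms on $H^1 (\cD,S)$. Now $H^1 (\cD,S)$ is a closed subspace of $H^1 (\cD)$, being the kernel of the continuous trace map onto $L^2 (S)$, and hence complete in $\| \cdot \|_{H^1 (\cD)}$. Its completion with respect to the equivalent norm $\| \cdot \|_{\SL}$, which by definition is $H_{\SL} (\cD)$, therefore coincides with $H^1 (\cD,S)$ itself. This gives the equivalence of norms and the coincidence of $H_{\SL} (\cD)$ and $H^1 (\cD,S)$ as topological vector spaces.

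The step that I expect to require the most care is the boundary term: one must ensure the trace estimate is genuinely available on a merely Lipschitz boundary, and that the factor $b_{0,0}/b_1$, which a priori could be unbounded, is really under control. This is exactly the place where the extra hypothesis $b_{0,0}/b_1 \in L^\infty (\partial \cD \setminus S)$ is indispensable, as it alone converts the weighted boundary integral into a constant multiple of $\| u \|_{L^2 (\partial \cD)}^2$.
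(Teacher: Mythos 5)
Your proposal is correct and follows essentially the same route as the paper: one direction is quoted from Lemma \ref{l.SL.coercive.strong}, and the reverse inequality is obtained term by term using the $L^\infty$ bounds on $a_{i,j}$, $a_{0,0}$ and $b_{0,0}/b_1$ together with the trace theorem for the boundary integral. Your closing completion argument (closedness of $H^1(\cD,S)$ in $H^1(\cD)$, hence completeness under the equivalent norm) merely makes explicit what the paper leaves implicit after establishing the two-sided estimate.
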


\begin{proof}
We have already proved that, under estimate (\ref{eq.coercive}) and
                             one of the three conditions of Lemma \ref{l.SL.coercive.equiv},
the norm $\| \cdot \|_{\SL}$ is not weaker than the norm
         $\| \cdot \|_{H^1 (\cD)}$
on $H^1 (\cD,S)$.
Furthermore, since
   $a_{i,j}$ and $a_{0,0}$ are bounded functions in $\cD$ and
   $b_{0,0} / b_1$ a bounded function on $\partial \cD \setminus S$,
we obtain
\begin{eqnarray*}
   \int_{\cD}
   \sum_{i,j = 1}^n a_{i,j} (x)\,  \overline {\partial_i u (x)} \partial_j u (x)\,
   dx
 & \leq &
   c\, \sum_{j=1}^n  \| \partial_j u \|^2_{L^2 (\cD)},
\\
   \int_{\cD}
   a_{0,0} (x)\, |u (x)|^2\, dx
 & \leq &
   c\, \| u \|^2_{L^2 (\cD)},
\\
   \int_{\partial \cD \setminus S}
   \frac{b_{0,0} (x)}{b_1 (x)}\, |u (x)|^2\, ds
 & \leq &
   c\, \| u \|^2_{H^1 (\cD)}
\end{eqnarray*}
for all $u \in H^1 (\cD,S)$,
   the last inequality being a consequence of the trace theorem for Sobolev spaces.
Here, by $c$ is meant a constant independent of $u$, which can be diverse in different
applications.

On combining the above estimates we deduce immediately that there is a constant $c$ with
the property that
$$
   \| u \|_{\SL} \leq c\, \| u \|_{H^1 (\cD)}
$$
for all $u \in H^1 (\cD,S)$, as desired.
\end{proof}

Now we want to understand what perturbations of $a_j$ and
                                                $\delta b_{0} / b_1$,
                                                $t$
preserve the completeness property of root functions of the operator $L_0^{-1}$.

\begin{lem}
\label{l.Fredholm}
Let estimate
   (\ref{eq.coercive})
and
   one of the three conditions of Lemma \ref{l.SL.coercive.strong}
be fulfilled.
If
\begin{equation}
\label{eq.positive.part2}
   \Big| \int_{\partial \cD \setminus S} b_1^{-1} \partial_t u\, \overline{v}\, ds \Big|
 \leq c\, \| u \|_{\SL} \| v \|_{\SL}
\end{equation}
for all $u, v \in H^1 (\cD,S)$, with a constant $c < 1$ independent on $u$ and $v$, and there is
$s < 1/2$ such that
\begin{equation}
\label{eq.3}
   |((b_1^{-1} \delta b_0) u, v)_{L^2 (\partial \cD \setminus S)}|
 \leq
   c\, \| u \|_{H^s (\partial \cD)}\, \| v \|_{\SL}
\end{equation}
for all $u, v \in H^1 (\cD,S)$, then inequality (\ref{eq.positive.part1}) holds and
                                        problem (\ref{eq.SL.w}) is Fredholm of index zero.
\end{lem}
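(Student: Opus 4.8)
The plan is to verify estimate (\ref{eq.positive.part1}) first, and then to realize the operator $L = L_0 + \delta L$ as the sum of a continuously invertible operator and a compact one. Under the hypotheses, Lemma \ref{l.SL.coercive.strong} provides the continuous embedding $H_{\SL} (\cD) \hookrightarrow H^1 (\cD,S)$ together with the compactness of the inclusion $\iota : H_{\SL} (\cD) \hookrightarrow L^2 (\cD)$; by Lemma \ref{l.dual.emb} the induced map $\iota' : L^2 (\cD) \hookrightarrow H^-_{\SL} (\cD)$ is then compact as well. These two facts, the trace theorem for $H^1 (\cD)$, and the boundedness of the coefficients $a_j$ and $\delta a_0$ in $L^\infty (\cD)$ will underlie the entire argument.

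First I would establish (\ref{eq.positive.part1}). The term $(b_1^{-1} \partial_t u, v)_{L^2 (\partial \cD \setminus S)}$ is controlled directly by (\ref{eq.positive.part2}). For the term $(b_1^{-1} \delta b_0 u, v)_{L^2 (\partial \cD \setminus S)}$ one invokes (\ref{eq.3}) and then dominates $\| u \|_{H^s (\partial \cD)}$ by $\| u \|_{H^{1/2} (\partial \cD)}$, hence by $\| u \|_{H^1 (\cD)} \leq c\, \| u \|_{\SL}$ via the trace theorem and the coercive embedding. The volume terms $(\sum_j a_j \partial_j u, v)_{L^2 (\cD)}$ and $(\delta a_0 u, v)_{L^2 (\cD)}$ are bounded by $c\, \| u \|_{H^1 (\cD)} \| v \|_{L^2 (\cD)}$ using $a_j, \delta a_0 \in L^\infty (\cD)$, together with $\| u \|_{H^1 (\cD)} \leq c\, \| u \|_{\SL}$ and $\| v \|_{L^2 (\cD)} \leq c\, \| v \|_{\SL}$. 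Summing these four bounds yields (\ref{eq.positive.part1}) with some constant $c$ (not necessarily less than $1$), which suffices to define the bounded operator $L : H_{\SL} (\cD) \to H^-_{\SL} (\cD)$ as in Section \ref{s.4}.

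Next I would split the perturbation $\delta L = L - L_0$ as $\delta L = \delta L_1 + \delta L_2$, where $\delta L_1$ carries the tangential term $b_1^{-1} \partial_t u$ and $\delta L_2$ carries the remaining three terms. By (\ref{eq.positive.part2}) the norm of $\delta L_1 : H_{\SL} (\cD) \to H^-_{\SL} (\cD)$ does not exceed $c < 1$, and since $\| L_0^{-1} \| = 1$ by Lemma \ref{l.solv.SL1}, the familiar Neumann series argument used in the proof of Lemma \ref{l.solv.SL2} shows that $L_0 + \delta L_1$ is continuously invertible. The crux is to prove that $\delta L_2$ is compact. For the volume contributions, the assignment $u \mapsto \sum_j a_j \partial_j u + \delta a_0 u$ is bounded from $H_{\SL} (\cD)$ into $L^2 (\cD)$, and the corresponding part of $\delta L_2$ is precisely $\iota'$ composed with this assignment; as $\iota'$ is compact, this part is compact. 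For the boundary contribution, given a bounded sequence $\{ u_k \}$ in $H_{\SL} (\cD)$, its traces are bounded in $H^{1/2} (\partial \cD)$, so by the compactness of $H^{1/2} (\partial \cD) \hookrightarrow H^s (\partial \cD)$ for $s < 1/2$ a subsequence of traces is Cauchy in $H^s (\partial \cD)$; estimate (\ref{eq.3}) then forces the corresponding images under $\delta L_2$ to be Cauchy in $H^-_{\SL} (\cD)$, whence compactness.

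Finally, writing $L = (L_0 + \delta L_1) + \delta L_2$ exhibits $L$ as a compact perturbation of the continuously invertible operator $L_0 + \delta L_1$, so $L$ is Fredholm of index zero and problem (\ref{eq.SL.w}) is Fredholm of index zero. I expect the main obstacle to be the compactness of $\delta L_2$, and specifically its boundary piece: the tangential term must be kept \emph{out} of $\delta L_2$, for it is genuinely of the same order as the principal part and cannot be made compact, which is exactly why the smallness $c < 1$ in (\ref{eq.positive.part2}) is indispensable; while the zero-order boundary term becomes compact only through the strict inequality $s < 1/2$, which is what renders the trace embedding $H^{1/2} (\partial \cD) \hookrightarrow H^s (\partial \cD)$ compact.
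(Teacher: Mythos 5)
Your proposal is correct and follows essentially the same route as the paper: the tangential term is isolated as a small perturbation $\delta L_1$ (the paper's $\delta L$) making $L_0+\delta L_1$ invertible by a Neumann-series argument, while the volume terms and the zero-order boundary term (your $\delta L_2$, the paper's $C_1+C_2$) are shown compact via the compactness of $\iota'$ and via the trace theorem combined with the Rellich compactness of $H^{1/2}(\partial\cD)\hookrightarrow H^s(\partial\cD)$, respectively. The verification of (\ref{eq.positive.part1}) and the final conclusion that $L$ is a compact perturbation of an invertible operator, hence Fredholm of index zero, also match the paper's proof.
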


\begin{proof}
Denote by $\delta L : H_{\SL} (\cD) \to H^-_{\SL} (\cD)$ the operator defined by $\partial_t$,
as is described in  Section \ref{s.4}.
By (\ref{eq.positive.part2}),
$$
   \| \delta L \|_{\mathcal{L} (H_{\SL} (\cD),H^-_{\SL} (\cD))}
 < 1
 = \| L_0^{-1} \|_{\mathcal{L} (H^-_{\SL} (\cD),H_{\SL} (\cD))},
$$
and so a familiar argument shows that the operator
   $L_0 + \delta L : H_{\SL} (\cD) \to H^-_{\SL} (\cD)$
is invertible.

The differential operator
$$
   \delta A = \sum_{j=1}^n a_j (x)\, \partial_j + \delta a_0\, (x)
$$
maps the space $H^1 (\cD)$ continuously to
               $L^2 (\cD)$.
Lemma \ref{l.SL.coercive.strong} implies readily that $\delta A$ maps also the space
$H_{\SL} (\cD)$ continuously to $L^2 (\cD)$.
In particular, we get
\begin{equation}
\label{eq.positive.part11}
   |(\delta A u, v)_{L^2 (\cD)}|
 \leq
   \mathrm{const}\, \| u \|_{H^{1} (\cD)} \| v \|_{L^2 (\cD)}
 \leq
   \mathrm{const}\, \| u \|_{\SL} \| v \|_{\SL}
\end{equation}
for all $u, v \in H_{\SL} (\cD)$.
As the embedding
   $\iota' : L^2 (\cD) \hookrightarrow H^-_{\SL} (\cD)$
is compact (see Lemma \ref{l.dual.emb}), the operator $\delta A$ induces a compact operator
$$
   C_1 :  H_{\SL} (\cD) \to H^-_{\SL} (\cD).
$$

By (\ref{eq.3}),
$$
   |((b_1^{-1} \delta b_0) u, v)_{L^2 (\partial \cD \setminus S)}|
 \leq
   \mathrm{const}\, \| u \|_{H^s (\partial \cD)} \| v \|_{\SL}
 \leq
   \mathrm{const}\, \| u \|_{\SL} \| v \|_{\SL}
$$
for all $u, v \in H^1 (\cD,S)$.
Combining this estimate with (\ref{eq.positive.part11}) and
                             (\ref{eq.positive.part2})
shows that inequality (\ref{eq.positive.part1}) is fulfilled with $c$ a constant independent of
$u$ and
$v$.
Moreover, the summand
   $((b_1^{-1} \delta b_0) u, v)_{L^2 (\partial \cD \setminus S)}$
in the weak formulation (\ref{eq.SL.w}) of problem (\ref{eq.SL}) induces a bounded linear
operator
$$
   C_2 :  H_{\SL} (\cD) \to H^-_{\SL} (\cD)
$$
satisfying
\begin{equation}
\label{eq.est.comp}
   \| C_2 u \|_{H_{\SL}^- (\cD)}
 \leq
   \mathrm{const}\, \| u \|_{H^s (\partial \cD)}.
\end{equation}
Now, if $\sigma$ is a bounded set in $H_{\SL} (\cD)$ then,
   by Lemma \ref{l.SL.coercive.strong},
it is also bounded in $H^1 (\cD)$ and,
   by the trace theorem,
the set $\sigma \restriction_{\partial \cD}$ of restrictions of elements of $\sigma$ to
$\partial \cD$ is bounded in $H^{1/2} (\partial \cD)$, too.
The Rellich theorem implies that
   $\sigma \restriction_{\partial \cD}$
is precompact in $H^s (\partial \cD)$, if $s < 1/2$.
Hence, for any sequence $\{ u_k \}$ in $\sigma$ there is a subsequence $\{ u_{k_j} \}$ whose
restriction to $\partial \cD$ is a Cauchy sequence in $H^s (\partial \cD)$.
Using (\ref{eq.est.comp}) we conclude that any sequence $\{ u_k \}$ in $\sigma$ has a subsequence
$\{ u_{k_j} \}$, such that $\{ C_2 u_{k_j}\}$ is a Cauchy sequence in $H^-_{\SL} (\cD)$.
Thus, the operator $C_2 : H_{\SL} (\cD) \to H^-_{\SL} (\cD)$ is compact.

Finally, by construction, the operator
$$
   L - (L_0 \! + \! \delta L) = C_1 \! + \! C_2 :\, H_{\SL} (\cD) \to H^-_{\SL} (\cD)
$$
is compact.
Therefore, problem (\ref{eq.SL.w}) is equivalent to the Fredholm-type operator equation
$$
   (\Id + (L_0 \! + \! \delta L)^{-1} (C_1 \! + \! C_2)) u = (L_0 \! + \! \delta L)^{-1} f
$$
in $H_{\SL} (\cD)$ with compact operator
   $(L_0 \! + \! \delta L)^{-1} (C_1 \! + \! C_2) : H_{\SL} (\cD) \to H_{\SL} (\cD)$.
This establishes the lemma.
\end{proof}

In fact condition (\ref{eq.3}) means that the multiplication operator by $b_1^{-1} \delta b_0$
maps the traces of elements of $H_{\SL} (\cD)$ on the boundary continuously to
   $H^{-s} (\partial \cD \setminus S)$
with some $s < 1/2 $.

\begin{lem}
\label{l.eq.3}
If  $b_1^{-1} \delta b_0 \in L^\infty (\partial \cD \setminus S)$ then
$$
   |((b_1^{-1} \delta b_0) u, v)_{L^2 (\partial \cD \setminus S)}|
 \leq
   c\, \| u \|_{L^2 (\partial \cD)}\, \| v \|_{\SL}
$$
for all $u, v \in H^1 (\cD,S)$, the constant $c$ being independent of $u$ and
                                                                      $v$.
\end{lem}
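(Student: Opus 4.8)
The plan is to read the asserted inequality as the special case $s = 0$ of estimate (\ref{eq.3}), and to reduce it by a single application of the Cauchy--Schwarz inequality to a trace estimate for the factor involving $v$. First I would set $\beta := b_1^{-1} \delta b_0$, which by hypothesis lies in $L^\infty (\partial \cD \setminus S)$, and bound
$$
   |(\beta u, v)_{L^2 (\partial \cD \setminus S)}|
 \leq
   \| \beta \|_{L^\infty (\partial \cD \setminus S)}\, \| u \|_{L^2 (\partial \cD \setminus S)}\, \| v \|_{L^2 (\partial \cD \setminus S)}.
$$
Because $\partial \cD \setminus S$ is a subset of $\partial \cD$, the two boundary norms on the right are dominated by the corresponding norms over all of $\partial \cD$, so the whole matter comes down to the factors $\| u \|_{L^2 (\partial \cD)}$ and $\| v \|_{L^2 (\partial \cD)}$. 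The first of these is exactly the quantity $\| u \|_{H^0 (\partial \cD)} = \| u \|_{H^s (\partial \cD)}$ with $s = 0$ that appears on the right of the desired inequality, so nothing further is needed for the $u$-factor.

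The remaining and genuinely substantial step is to control the $v$-factor by the $\SL$-norm, i.e. to produce a constant $c$ with $\| v \|_{L^2 (\partial \cD)} \leq c\, \| v \|_{\SL}$ for all $v \in H^1 (\cD, S)$. Here I would invoke the standing coercivity assumption (\ref{eq.coercive}) of this section together with one of the three conditions of \lemref{l.SL.coercive.strong}, which by that lemma yields the continuous embedding $H_{\SL} (\cD) \hookrightarrow H^1 (\cD, S)$; in particular $\| v \|_{H^1 (\cD)} \leq c\, \| v \|_{\SL}$. Composing this with the trace theorem for Sobolev spaces on the Lipschitz domain $\cD$, which furnishes a bounded restriction operator $H^1 (\cD) \to H^{1/2} (\partial \cD) \hookrightarrow L^2 (\partial \cD)$, gives $\| v \|_{L^2 (\partial \cD)} \leq c\, \| v \|_{\SL}$ with $c$ independent of $v$. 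Substituting this and the trivial bound for the $u$-factor into the Cauchy--Schwarz estimate completes the argument.

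The main obstacle is precisely this passage from the $\SL$-norm to the boundary $L^2$-norm of $v$. It is exactly at this point that coercivity is indispensable: for complex-valued $u$ the Hermitian form $\int_\cD \sum a_{i,j} \overline{\partial_i u} \partial_j u\, dx$ entering $\| \cdot \|_{\SL}$ need not dominate $\sum_j \| \partial_j u \|_{L^2 (\cD)}^2$ (compare the discussion around (\ref{eq.ell}) and (\ref{eq.coercive})), so without (\ref{eq.coercive}) there is in general no embedding $H_{\SL} (\cD) \hookrightarrow H^1 (\cD)$ and hence no trace in $L^2 (\partial \cD)$ to estimate. Granting coercivity, however, everything else is routine and the single constant $c$ absorbs $\| \beta \|_{L^\infty (\partial \cD \setminus S)}$, the embedding constant of \lemref{l.SL.coercive.strong}, and the trace constant.
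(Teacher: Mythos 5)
Your proof is correct and follows essentially the same route as the paper's: a Cauchy--Schwarz bound using the $L^\infty$ hypothesis on $b_1^{-1}\delta b_0$, followed by the trace theorem $\| v \|_{L^2 (\partial \cD)} \leq c\, \| v \|_{H^1 (\cD)}$ and the embedding $\| v \|_{H^1 (\cD)} \leq c\, \| v \|_{\SL}$ coming from the coercivity assumption of this section via \lemref{l.SL.coercive.strong}. The only difference is that you make explicit the reliance on (\ref{eq.coercive}) and one of the three conditions of \lemref{l.SL.coercive.strong}, which the paper leaves implicit as the standing hypotheses of the coercive case.
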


\begin{proof}
Since $b_1^{-1} \delta b_0$ is a bounded function on $\partial \cD \setminus S$, it follows
that
\begin{eqnarray*}
   |((b_1^{-1} \delta b_0) u, v)_{L^2 (\partial \cD \setminus S)}|
 & \leq &
   c\, \| u \|_{L^2 (\partial \cD)} \| v \|_{L^2 (\partial \cD)}
\\
 & \leq &
   c\, \| u \|_{L^2 (\partial \cD)} \| v \|_{H^1 (\cD)}
\\
 & \leq &
   c_1\, \| u \|_{L^2 (\partial \cD)} \| v \|_{\SL}
\end{eqnarray*}
for all $u, v \in H^1 (\cD,S)$, as desired.
\end{proof}

Let
   $t_1 (x), \ldots, t_{n-1} (x)$
be a basis of tangential vectors of the boundary surface at a point $x \in \partial \cD$.
Then we can write
$$
   \partial_t u
 = \sum_{j=1}^{n-1} c_j (x) \, \partial_{t_j}
$$
with  $c_1, \ldots, c_{n-1}$ bounded functions on the boundary vanishing on $S$.
As $\partial_{t_j}$ are first order differential operators with bounded coefficients they map
   $H^{1/2} (\partial \cD)$ continuously to
   $H^{-1/2} (\partial \cD)$.
Given any $u, v \in H^1 (\cD,S)$, we get
\begin{eqnarray*}
   \Big| \int_{\partial \cD \setminus S} b_1^{-1} \partial_t u\, \overline{v}\, ds
   \Big|
 & \leq &
   c\,
   \sum_{j=1}^{n-1} \| \partial_{t_j} u \|_{H^{-1/2} (\partial \cD)}
                    \| v \|_{H^{1/2} (\partial \cD)}
\\
 & \leq &
   c\, \| u \|_{H^{1/2} (\partial \cD)} \| v \|_{H^{1/2} (\partial \cD)}
\\
 & \leq &
   c\, \| u \|_{H^{1} (\cD)} \|v\|_{H^{1} (\cD)}
\\
 & \leq &
   c\, \| u \|_{\SL} \| v \|_{\SL},
\end{eqnarray*}
provided that
   $b_1^{-1} c_j \in L^\infty (\partial \cD)$ for $1 \leq j \leq n-1$,
the constant $c$ does not depend on $u$ and
                                    $v$
and may be diverse in different applications.

\begin{thm}
\label{p.Fredholm1}
Let estimates (\ref{eq.coercive}),
               (\ref{eq.positive.part1})
and one of the three conditions of Lemma \ref{l.SL.coercive.strong} be fulfilled.
If the constant $c$ of inequality (\ref{eq.positive.part1}) is less than $1$, then problem (\ref{eq.SL.w}) is uniquely solvable and the inverse operator
   $L^{-1} : H_{\SL}^- (\cD) \to H_{\SL} (\cD) \subset H^1 (\cD)$
is bounded.
\end{thm}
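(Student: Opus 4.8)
The plan is to combine the two structural results already at our disposal: Lemma~\ref{l.SL.coercive.strong}, which under the coercivity estimate (\ref{eq.coercive}) together with any one of the three listed conditions produces the continuous embeddings $H_{\SL} (\cD) \hookrightarrow H^1 (\cD,S)$ and $H^{-1} (\cD) \hookrightarrow H_{\SL}^- (\cD)$ (and, in particular, the compactness of inclusion (\ref{eq.incl})), and Lemma~\ref{l.solv.SL2}, which yields continuous invertibility of $L$ whenever estimate (\ref{eq.incl.est}) holds and the constant $c$ of (\ref{eq.positive.part1}) is strictly less than $1$.

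First I would verify the hypothesis (\ref{eq.incl.est}) of Lemma~\ref{l.solv.SL2}. Since the embedding $H^1 (\cD) \hookrightarrow L^2 (\cD)$ is continuous and, by Lemma~\ref{l.SL.coercive.strong}, the inclusion $H_{\SL} (\cD) \hookrightarrow H^1 (\cD,S) \subset H^1 (\cD)$ is continuous as well, their composition gives a continuous inclusion $H_{\SL} (\cD) \hookrightarrow L^2 (\cD)$; this is precisely estimate (\ref{eq.incl.est}) with some constant $c$ independent of $u$.

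With (\ref{eq.incl.est}) now established and the standing hypothesis that the constant of (\ref{eq.positive.part1}) satisfies $c < 1$, Lemma~\ref{l.solv.SL2} applies verbatim: for each $f \in H_{\SL}^- (\cD)$ there is a unique $u \in H_{\SL} (\cD)$ solving (\ref{eq.SL.w}), and the operator $L : H_{\SL} (\cD) \to H_{\SL}^- (\cD)$ is continuously invertible, so that $L^{-1} : H_{\SL}^- (\cD) \to H_{\SL} (\cD)$ is bounded. It then remains only to upgrade the target space from $H_{\SL} (\cD)$ to $H^1 (\cD)$. Composing $L^{-1}$ with the continuous embedding $H_{\SL} (\cD) \hookrightarrow H^1 (\cD)$ furnished again by Lemma~\ref{l.SL.coercive.strong} shows that $L^{-1} : H_{\SL}^- (\cD) \to H^1 (\cD)$ is bounded, which is the assertion of the theorem.

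There is no genuine obstacle here beyond bookkeeping: the whole argument is an assembly of previously proved lemmas. The one point deserving care is that the coercivity estimate (\ref{eq.coercive}) does double duty --- it is what makes Lemma~\ref{l.SL.coercive.strong} available, and hence simultaneously supplies both the missing embedding (\ref{eq.incl.est}) needed to invoke Lemma~\ref{l.solv.SL2} and the final identification of $H_{\SL} (\cD)$ as a subspace of $H^1 (\cD)$ that promotes the weak solution to a genuine $H^1$ solution.
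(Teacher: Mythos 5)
Your proposal is correct and follows exactly the paper's route: the paper's proof consists of the single line ``This follows from Lemmas \ref{l.solv.SL2} and \ref{l.SL.coercive.strong},'' and your argument spells out precisely how those two lemmas combine (the embedding from Lemma \ref{l.SL.coercive.strong} supplying both estimate (\ref{eq.incl.est}) needed for Lemma \ref{l.solv.SL2} and the final inclusion $H_{\SL} (\cD) \subset H^1 (\cD)$).
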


\begin{proof}
This follows from Lemmas \ref{l.solv.SL2} and \ref{l.SL.coercive.strong}.
\end{proof}

Under the hypotheses of Theorem \ref{p.Fredholm1}, if moreover $t = 0$, then the system of root
functions of the compact operator
$
   \iota' \iota\, L^{-1}:  H^-_{\SL} (\cD) \to H^-_{\SL} (\cD)
$
is complete in the spaces $H^-_{\SL} (\cD)$,
                          $L^2 (\cD)$ and
                          $H_{\SL} (\cD) \subset H^1 (\cD)$.
Moreover, for each $\varepsilon > 0$, all eigenvalues of $\iota' \iota\, L^{-1}$ (except possibly
for a finite number) belong to the corner $|\arg \lambda| < \varepsilon$.

\begin{cor}
\label{c.root.func.4}
If the constant $c$ of inequality (\ref{eq.positive.part1}) is less than $|\sin (\pi /n)|$ then
the operator
$
   T : H_{\SL} (\cD) \to H^-_{\SL} (\cD)
$
corresponding to problem (\ref{eq.SL.w}) is continuously invertible.
Moreover, the system of root functions of the operator $T$ is complete in the spaces
   $H^-_{\SL} (\cD)$,
   $L^2 (\cD)$ and $H_{\SL} (\cD)$,
and all eigenvalues of $T$ belong to the corner $|\arg \lambda| \leq \arcsin c$.
\end{cor}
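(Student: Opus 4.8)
The plan is to read this off \thmref{t.root.func.Schatten} in the special case $s = 1$, supplying the continuous invertibility separately from \lemref{l.solv.SL2}.

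First I would pin down the Sobolev exponent. Under the standing hypotheses of this section — coercivity (\ref{eq.coercive}) together with one of the three side conditions — \lemref{l.SL.coercive.strong} furnishes a continuous embedding $H_{\SL} (\cD) \hookrightarrow H^1 (\cD,S) \hookrightarrow H^1 (\cD)$, and the inclusion $\iota$ is compact. Consequently the hypothesis of \thmref{t.root.func.Schatten} requiring $H_{\SL} (\cD) \hookrightarrow H^s (\cD)$ is satisfied with $s = 1$.

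With $s = 1$ the threshold $|\sin (\pi s / n)|$ is exactly $|\sin (\pi / n)|$, so the assumption $c < |\sin (\pi / n)|$ of the corollary is precisely the assumption $c < |\sin (\pi s / n)|$ needed to invoke \thmref{t.root.func.Schatten}. Since estimate (\ref{eq.positive.part1}) is in force, applying that theorem verbatim then yields the three remaining assertions simultaneously: every eigenvalue of $T$ lies in the corner $|\arg \lambda| \leq \arcsin c$, each ray $\arg \lambda = \vartheta$ with $|\vartheta| > \arcsin c$ is a ray of minimal growth for $\mathcal{R} (\lambda; T)$, and the system of root functions is complete in $H^-_{\SL} (\cD)$, $L^2 (\cD)$ and $H_{\SL} (\cD)$.

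It remains to record the continuous invertibility of $T$. Since $|\sin (\pi / n)| \leq 1$ for every $n \geq 2$ (and the statement is vacuous for $n = 1$, where the right-hand side is $0$ and $c$ is a positive constant), the hypothesis forces $c < 1$. By \lemref{l.solv.SL2} the operator $L : H_{\SL} (\cD) \to H^-_{\SL} (\cD)$ is then continuously invertible, which is the same as saying that $0$ is a resolvent point of the closed operator $T$ and that $T^{-1}$ is bounded on $H^-_{\SL} (\cD)$; this is the asserted continuous invertibility of $T$. I expect no genuine obstacle here: the entire content is the bookkeeping identification $s = 1$ together with the elementary bound $|\sin (\pi / n)| \leq 1$, the substantive spectral analysis having already been carried out in \thmref{t.root.func.Schatten} and the solvability estimate in \lemref{l.solv.SL2}.
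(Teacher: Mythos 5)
Your proof is correct and follows essentially the same route as the paper: both reduce the corollary to \thmref{t.root.func.Schatten} applied with $s = 1$, the embedding $H_{\SL} (\cD) \hookrightarrow H^1 (\cD)$ being supplied by \lemref{l.SL.coercive.strong}, and both note that the threshold $|\sin (\pi s / n)|$ then becomes exactly $|\sin (\pi / n)|$. The only (inessential) difference is that you cite \lemref{l.solv.SL2} for the continuous invertibility, where the paper cites \lemref{l.Fredholm}; since the corollary assumes (\ref{eq.positive.part1}) directly with $c < |\sin (\pi / n)| \leq 1$, your choice is, if anything, the more direct one, and it is in any case the same lemma used inside the proof of \thmref{t.root.func.Schatten} itself.
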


\begin{proof}
The statement follows from Theorem \ref{t.root.func.Schatten} and
                           Lemmas \ref{l.SL.coercive.strong} and \ref{l.Fredholm}.
\end{proof}

\begin{thm}
\label{t.root.func.coercive.2}
Suppose that
   estimate (\ref{eq.coercive}) and
   one of the three conditions of Lemma \ref{l.SL.coercive.strong}
are fulfilled.
If
   $t = 0$ and
   $b_1^{-1} \delta b_0 \in L^\infty (\partial \cD \setminus S)$,
then the operator
$
   L : H_{\SL} (\cD) \to H^-_{\SL} (\cD)
$
related to Problem (\ref{eq.SL.w}) is Fredholm.
Moreover, the system of root functions of the corresponding closed operator
$
   T : H_{\SL}^- (\cD) \to H_{\SL}^- (\cD)
$
is complete in the spaces $H^-_{\SL} (\cD)$,
                          $L^2 (\cD)$ and
                          $H^1 (\cD,S)$,
and, for arbitrary $\varepsilon > 0$, all eigenvalues of $T$ (except for a finite number)
belong to the corner $|\arg \lambda| < \varepsilon$ in $\C$.
\end{thm}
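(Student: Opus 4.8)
The plan is to recognize \thmref{t.root.func.coercive.2} as a direct application of the weak-perturbation machinery of \thmref{t.root.func.1}, once the hypotheses of that theorem have been assembled from the coercive-case lemmas. The decisive structural observation is that, because $t = 0$, the \emph{entire} non-self-adjoint part of $L$ is compact, so that $L = L_0 + C$ with $C$ compact. This is precisely the situation in which the Keldysh-type \thmref{t.root.func.1}, rather than the rays-of-minimal-growth results, is the correct tool, and it is what produces the sharp eigenvalue corner $|\arg \lambda| < \varepsilon$ instead of the wider corner $|\arg \lambda| < \arcsin c$.

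First I would invoke \lemref{l.SL.coercive.strong}: under estimate (\ref{eq.coercive}) together with one of its three conditions we obtain the continuous embeddings $H_{\SL} (\cD) \hookrightarrow H^1 (\cD,S)$ and $H^{-1} (\cD) \hookrightarrow H_{\SL}^- (\cD)$, and in particular the inclusion (\ref{eq.incl}) is compact. Since $H^1 (\cD,S)$ is densely contained in $H_{\SL} (\cD)$ by construction while $H_{\SL} (\cD)$ embeds continuously into $H^1 (\cD,S)$, the two spaces coincide as sets; as the $\SL\,$-norm dominates the $H^1\,$-norm there, completeness of a system in the $\SL\,$-topology will entail completeness in the $H^1 (\cD,S)\,$-topology. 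This is the bridge that converts the conclusions of \thmref{t.root.func.1}, stated for $H_{\SL} (\cD)$, into the asserted completeness in $H^1 (\cD,S)$.

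Next I would verify compactness of the perturbation. With $t = 0$ the tangential term of (\ref{eq.B}) is absent, so the operator built from $\partial_t$ vanishes and estimate (\ref{eq.positive.part2}) holds trivially with any $c < 1$; meanwhile $b_1^{-1}\delta b_0 \in L^\infty (\partial \cD \setminus S)$ gives estimate (\ref{eq.3}) with $s = 0$ through \lemref{l.eq.3}. Hence \lemref{l.Fredholm} applies and shows both that (\ref{eq.positive.part1}) holds and that $L - L_0 = C_1 + C_2 =: C$ is a compact operator $H_{\SL} (\cD) \to H_{\SL}^- (\cD)$; since $L_0$ is continuously invertible by \lemref{l.solv.SL1}, the operator $L = L_0 + C$ is Fredholm, which is the first assertion. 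The continuous embedding $H_{\SL} (\cD) \hookrightarrow H^1 (\cD)$ realizes the hypothesis of Corollary \ref{c.order.SL0} with $s = 1$, so $Q_1$ lies in $\gS_{n/2 + \varepsilon}$ and is in particular of finite order.

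Finally I would feed the two facts $C$ compact and $Q_1$ of finite order into \thmref{t.root.func.1}, with its $\delta L$ taken to be $C$. This yields at once that the spectrum of the closed operator $T$ is different from $\C$ hence discrete, that the system of root functions of $T$ is complete in $H_{\SL}^- (\cD)$, $L^2 (\cD)$ and $H_{\SL} (\cD)$, and that for every $\varepsilon > 0$ all but finitely many eigenvalues lie in $|\arg \lambda| < \varepsilon$; the identification of the previous paragraph upgrades completeness in $H_{\SL} (\cD)$ to completeness in $H^1 (\cD,S)$. The only genuinely technical ingredient is the compactness of the boundary contribution $C_2$, which rests on the trace theorem combined with the Rellich embedding $H^{1/2} (\partial \cD) \hookrightarrow H^s (\partial \cD)$ for $s < 1/2$; but this is already carried out inside \lemref{l.Fredholm}, so the remaining work is essentially organizational, the key conceptual step being the reduction, forced by $t = 0$, of the whole perturbation to a single compact operator.
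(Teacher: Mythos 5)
Your proposal is correct and takes essentially the same route as the paper, whose entire proof reads: combine Lemma~\ref{l.SL.coercive.strong} and Lemma~\ref{l.Fredholm} (which, since $t=0$, exhibit $L=L_0+C$ with $C=C_1+C_2$ compact and give the embedding into $H^1(\cD)$, hence finite order of $Q_1$ by Corollary~\ref{c.order.SL0}) with the weak-perturbation completeness theorem. The one discrepancy is purely a matter of citation: the paper invokes Theorem~\ref{p.root.func.1}, which presupposes invertibility of the perturbed operator, whereas your appeal to Theorem~\ref{t.root.func.1} --- which only needs $L$ Fredholm and is precisely about the closed operator $T$, its discrete spectrum and the corner $|\arg\lambda|<\varepsilon$ --- is the more accurate reference (it is the one the paper itself uses in the analogous non-coercive Theorem~\ref{t.root.func.CR}), and your explicit bridge from completeness in $H_{\SL}(\cD)$ to completeness in $H^1(\cD,S)$ spells out a step the paper leaves implicit.
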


\begin{proof}
For the proof it suffices to apply Theorem \ref{p.root.func.1} combined with
                                   Lemmas \ref{l.SL.coercive.strong} and \ref{l.Fredholm}.
\end{proof}

\begin{thm}
\label{t.root.func.6}
Let
   estimate (\ref{eq.coercive}) and
   one of the three conditions of Lemma \ref{l.SL.coercive.strong}
be fulfilled.
If
   $b_1^{-1} \delta b_0 \in L^\infty (\partial \cD \setminus S)$
and
   the constant $c$ of estimate (\ref{eq.positive.part2}) is less than $|\sin \pi / n|$,
then the operator
$
   L : H_{\SL} (\cD) \to H^-_{\SL} (\cD)
$
related to problem (\ref{eq.SL.w}) is Fredholm.
Moreover, the system of root functions of the corresponding closed operator $T$ in $H^-_{\SL} (\cD)$
is complete in the spaces $H^-_{\SL} (\cD)$,
                          $L^2 (\cD)$ and
                          $H^1 (\cD,S)$,
and, for any $\varepsilon > 0$, all eigenvalues of $T$ (except for a finite number) lie in the corner
$
|\arg \lambda| < \arcsin c + \varepsilon.
$
\end{thm}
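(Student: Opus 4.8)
The plan is to exhibit the operator $L$ as a perturbation $L_0+\delta L+C$ of the self-adjoint model $L_0$, with $\delta L$ a \emph{small} bounded operator and $C$ compact, and then to invoke Theorem~\ref{t.root.func.Schatten.Comp}; the conclusions are first obtained in $H_{\SL}(\cD)$ and transferred to $H^1(\cD,S)$ at the end. First I would verify the hypotheses of Lemma~\ref{l.Fredholm}. Estimate (\ref{eq.positive.part2}) holds by assumption with $c<|\sin(\pi/n)|\le 1$, and since $b_1^{-1}\delta b_0\in L^\infty(\partial\cD\setminus S)$, Lemma~\ref{l.eq.3} supplies estimate (\ref{eq.3}) with $s=0<1/2$. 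Hence Lemma~\ref{l.Fredholm} applies and yields both inequality (\ref{eq.positive.part1}) and the Fredholm property (of index zero) of problem (\ref{eq.SL.w}); this already gives the assertion that $L:H_{\SL}(\cD)\to H_{\SL}^-(\cD)$ is Fredholm.

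Next I would read off from the proof of Lemma~\ref{l.Fredholm} the decomposition $L=(L_0+\delta L)+(C_1+C_2)$, where $\delta L$ is the bounded operator induced by the tangential field $\partial_t$ and $C:=C_1+C_2:H_{\SL}(\cD)\to H^-_{\SL}(\cD)$ is compact. Estimate (\ref{eq.positive.part2}) forces $\|\delta L\|_{\mathcal{L}(H_{\SL}(\cD),H^-_{\SL}(\cD))}\le c<|\sin(\pi/n)|$. By Lemma~\ref{l.SL.coercive.strong}, the coercivity (\ref{eq.coercive}) together with one of its three conditions gives the continuous embedding $H_{\SL}(\cD)\hookrightarrow H^1(\cD)$, that is, (\ref{eq.emb.s}) with $s=1$; with this value of $s$ one has $\|\delta L\|<|\sin(\pi s/n)|$, so all hypotheses of Theorem~\ref{t.root.func.Schatten.Comp} are met for $L_0+\delta L+C$.

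Applying Theorem~\ref{t.root.func.Schatten.Comp} then delivers at once that the spectrum of $T$ is discrete, that the system of root functions is complete in $H^-_{\SL}(\cD)$, $L^2(\cD)$ and $H_{\SL}(\cD)$, and that for each $\varepsilon>0$ all but finitely many eigenvalues lie in the corner $|\arg\lambda|<\arcsin\|\delta L\|+\varepsilon$. Since $\|\delta L\|\le c$ and $\arcsin$ is increasing, this corner is contained in $|\arg\lambda|<\arcsin c+\varepsilon$, which is the stated localization. To upgrade the completeness from $H_{\SL}(\cD)$ to $H^1(\cD,S)$ I would use that the continuous inclusion $H_{\SL}(\cD)\hookrightarrow H^1(\cD,S)$ of Lemma~\ref{l.SL.coercive.strong} is a topological isomorphism: its image contains the dense subspace $H^1(\cD,S)$ out of which $H_{\SL}(\cD)$ is completed, so the open mapping theorem (equivalently Lemma~\ref{l.SL.coercive.equiv}) identifies the two spaces with equivalent norms, and density of the span of the root functions carries over verbatim.

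The argument is essentially an assembly of the earlier machinery, so the only point that genuinely requires care is the \emph{bookkeeping of the splitting}: one must keep the genuinely small part $\delta L$ (the $\partial_t$-term, controlled by (\ref{eq.positive.part2})) strictly separate from the merely compact part $C$ (arising from $\delta A$ and from $b_1^{-1}\delta b_0$), and check that the norm of the small part stays below the threshold $|\sin(\pi s/n)|$ for the correct value $s=1$ produced by coercivity. Once this split is in place, Theorem~\ref{t.root.func.Schatten.Comp} performs all the analytic work.
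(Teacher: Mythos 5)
Your proposal is correct and is essentially the paper's own proof: the paper disposes of this theorem in one line as a consequence of Theorem~\ref{t.root.func.Schatten.Comp} together with Lemmas~\ref{l.SL.coercive.strong} and~\ref{l.Fredholm}, and your write-up supplies exactly the intended details (Lemma~\ref{l.eq.3} giving (\ref{eq.3}) with $s=0$, the splitting $L=L_0+\delta L+C_1+C_2$ from the proof of Lemma~\ref{l.Fredholm} with $\|\delta L\|\leq c<|\sin(\pi/n)|$, and the embedding (\ref{eq.emb.s}) with $s=1$ from coercivity). One small remark: for the final transfer to $H^1(\cD,S)$ you do not need the norm equivalence of Lemma~\ref{l.SL.coercive.equiv} (which assumes $b_{0,0}/b_1\in L^\infty(\partial\cD\setminus S)$, not granted here); the one-sided continuous embedding $H_{\SL}(\cD)\hookrightarrow H^1(\cD,S)$ of Lemma~\ref{l.SL.coercive.strong}, together with the fact that every element of $H^1(\cD,S)$ lies in $H_{\SL}(\cD)$, already carries density over.
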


\begin{proof}
This is a consequence of Theorem \ref{t.root.func.Schatten.Comp} and
                         Lemmas \ref{l.SL.coercive.strong} and \ref{l.Fredholm}.
\end{proof}

\section{Zaremba type problems}
\label{s.8}

As but one example of boundary value problems satisfying the Shapiro-Lopatins\-kii condition we
consider the mixed problem
\begin{equation}
\label{eq.Zar}
\left\{
\begin{array}{rclcl}
   \displaystyle
   - \iD_n u  + \sum_{j=1}^n a_j (x) \partial_j u + a_0 (x) u
 & =
 & f
 & \mbox{in}
 & \cD,
\\
   u
 & =
 & 0
 & \mbox{on}
 & S,
\\
   \partial_v u
 & =
 & 0
 & \mbox{on}
 & \partial \cD \setminus S
\end{array}
\right.
\end{equation}
for a real-valued function $u$, where
   $\iD_n$ is the Laplace operator in $\mathbb{R}^n$,
   the coefficients $a_1, \ldots, a_n$ and $a_0$ are assumed to be bounded functions in $\cD$,
and
$
   \partial_v = \partial_\nu + \varepsilon \partial_t
$
with
   a tangential vector field $t (x)$ on $\partial \cD$ whose coefficients are bounded functions
   vanishing on $S$,
and
   $\varepsilon \in \C$
(cf. \cite{Zare10}).

In this case
   $a_{i,j} = \delta_{i,j}$,
   $b_0 = \chi_S$ is the characteristic function of the boundary set $S$, and
   $b_1 = \chi_{\partial \cD \setminus S}$ is that of $\partial \cD \setminus S$.

From results of the previous section it follows that the root functions related to problem
(\ref{eq.Zar}) in the space $H_{\SL} (\cD)$ are complete in $H^-_{\SL} (\cD)$,
                                                            $L^2 (\cD)$ and
                                                            $H^1 (\cD,S)$
for all $\varepsilon$ of sufficiently small modulus.

We finish the paper by showing a second order elliptic differential operator in $\mathbb{R}^2$
for which no Zaremba-type problem is Fredholm.
The idea is traced back to a familiar example of A.V. Bitsadze (1948).

\begin{exmp}
\label{ex.baf}
Let $A = \bar{\partial}^2$ be the square of the Cauchy-Riemann operator in the plane of
complex variable $z$.
We choose $\cD$ to be the upper half-disk of radius $1$, i.e., the set of all $z \in \C$
satisfying $|z| < 1$ and $\Im z > 0$.
As $S$ we take the upper half-circle, i.e., the part of $\partial \cD$ lying in the upper
half-plane.
Consider the function sequence
$$
   u_k (z) = (|z|^2 - 1)\, \frac{\sin (k z)}{k^s},
$$
for $k = 1, 2, \ldots$,
   where $s$ is a fixed positive number.
Each function $u_k$ vanishes on $S$.
Moreover, for any differential operator $B$ of order $< s$ with bounded coefficients, the
sequence $\{ B u_k \}$ converges to zero uniformly on $\partial \cD \setminus S = [-1,1]$.
Since $|u_k (z)| \to \infty$ for all $z \in \cD$, we deduce that no reasonable setting of
Zaremba-type problem is possible.
\end{exmp}

\bigskip

{\sc Acknowledgements\,}
The first author gratefully acknowledges the financial support of the RFBR grant
   11-01-91330-NNIO\_a.

\newpage

\end{document}